\definecolor{blue-violet}{rgb}{0.54, 0.17, 0.89}
\tikzstyle{PurpleLine}=[line width=0.3mm,color=blue-violet,text=black]
\tikzstyle{PurplePoly}=[PurpleLine,fill=blue-violet!30]
\tikzstyle{BlueLine}=[line width=0.3mm,color=blue,text=black]
\tikzstyle{BluePoly}=[BlueLine,fill=blue!20]
\tikzstyle{RedLine}=[line width=0.3mm,color=red,text=black]
\tikzstyle{RedPoly}=[RedLine,fill=red!20]
\tikzstyle{GreenLine}=[thick,draw=black!30!green,text=black]
\tikzstyle{GreenPoly}=[thick,draw=green!50!black,fill=green!30,join=bevel]
\tikzstyle{OrangeLine}=[thick,color=orange]
\tikzstyle{GrayLine}=[thick,color=black!50!gray]
\tikzstyle{GrayPoly}=[GrayLine,fill=gray!20]
\tikzstyle{dot}=[shape=circle,draw,color=black,fill=black,inner sep=1.5pt]
\tikzstyle{bigdot}=[dot,inner sep=2pt]
\tikzstyle{littledot}=[dot,inner sep=1.2pt]
\tikzstyle{disk}=[thick,shape=circle,draw,color=black,fill=yellow!10]
\tikzstyle{plate}=[thick,shape=rectangle,draw,color=black,fill=yellow!10,
\tikzstyle{dot}=[shape=circle,draw,color=black,fill=black,inner sep=1.5pt]
\tikzstyle{opendot}=[dot,fill=white]
\tikzstyle{YellowRect} = [shape=rectangle,rounded corners,draw,fill=yellow!40,minimum width = 2cm, minimum height=1cm]
\theoremstyle{plain}
\newtheorem{thm}{Theorem}[section]
\newtheorem{lem}[thm]{Lemma}
\newtheorem{cor}[thm]{Corollary}
\newtheorem{prop}[thm]{Proposition}
\newtheorem{mainthm}{Theorem}
\theoremstyle{definition}
\newtheorem{defn}[thm]{Definition}
\newtheorem{rem}[thm]{Remark}
\newtheorem{example}[thm]{Example}
\newcommand{\C}{\mathbb{C}}
\newcommand{\NC}{\textsc{NC}}
\newcommand{\BS}{\textsc{BS}}
\newcommand{\conv}{\textsc{Conv}}
\newcommand{\bool}{\textsc{Bool}}
\newcommand{\interior}{\text{int}}
\newcommand{\aext}{\mathcal{A}^\text{ex}}
\begin{document}

\title[Noncrossing Partition Lattices from Planar Configurations]{Noncrossing Partition Lattices \\ from Planar Configurations}
\author[S. Cohen]{Stella Cohen}
\email{scohen2@swarthmore.edu}
\address{Dept. of Mathematics \& Statistics, Swarthmore College, 
Swarthmore, PA 19081}
\author[M. Dougherty]{Michael Dougherty}
\email{doughemj@lafayette.edu}
\address{Dept. of Mathematics, Lafayette College,
  Easton, PA 18042}
\author[A. Harsh]{Andrew D. Harsh}
\email{aharsh1@swarthmore.edu}
\address{Dept. of Mathematics \& Statistics, Swarthmore College, 
Swarthmore, PA 19081}
\author[S. Martin]{Spencer Park Martin}
\email{smartin4@swarthmore.edu}
\address{Dept. of Mathematics \& Statistics, Swarthmore College, 
Swarthmore, PA 19081}
\date{\today}

\begin{abstract}
    The lattice of noncrossing partitions is well-known for its 
    wide variety of combinatorial appearances and properties.
    For example, the lattice is rank-symmetric and enumerated 
    by the Catalan numbers. In this article, we introduce a large family of 
    new noncrossing partition lattices with both of these properties, 
    each parametrized by a configuration of $n$ points in the plane.
\end{abstract}

\keywords{Noncrossing partitions, Catalan numbers, lattices, configurations}

\subjclass{05A18, 52A10, 52C35}

\maketitle

\section{Introduction}

For each subset $P$ of the complex plane, a \emph{noncrossing partition}
of $P$ is a way of dividing $P$ into subsets with pairwise disjoint
convex hulls. The collection of all noncrossing partitions of $P$,
denoted $\NC(P)$, is a partially ordered set under refinement. 
When $P$ is the vertex set for a convex $n$-gon, $\NC(P)$ is
the classical noncrossing partition lattice $\NC_n$ introduced
by Kreweras \cite{kreweras72}. Among other things, Kreweras showed that the
size of $\NC_n$ is counted by the combinatorially ubiquitous Catalan numbers
$C_n = \frac{1}{n+1}\binom{2n}{n}$ and, more specifically, the number of 
lattice elements with rank $k$ is the Narayana number $N_{n,k} = \frac{1}{n}
\binom{n}{k}\binom{n}{k-1}$. Since $N_{n,k} = N_{n,n-k}$, this further says
that $\NC_n$ is a \emph{rank-symmetric} lattice.
In the fifty years since its definition,
the noncrossing partition lattice has made countless appearances in 
algebraic and geometric combinatorics - see the survey articles 
\cite{mccammond06} and \cite{baumeister19} for more information.

Returning to the more general case prompts a natural question: 
for which subsets $P \subset \mathbb{C}$ does the poset $\NC(P)$ 
have similar properties to $\NC_n$? While some existing 
work studies the size of $\NC(P)$ in asymptotic and 
extremal cases (e.g. \cite{sharirwelzl06} \cite{razen13}), 
similarities to $\NC_n$ seem uncommon in the literature. In our first main theorem,
we introduce a convexity condition on $P$ which guarantees that $\NC(P)$ 
has the same size as $\NC_n$.

Let $P \subset \mathbb{C}$ be a set of $n$ points in general position.
We say that a subset $A\subseteq P$ is \emph{convex} if no point in $A$
lies in the convex hull of the others, and we say that $P$ satisfies
\emph{Property~$\Delta_k$} if, for all convex $A\subseteq P$, the convex
hull of $A$ contains at most $|A| + k - 3$ elements of $P$ in its interior. 

\begin{mainthm}[Theorem~\ref{thm:catalan-counting}]
    \label{mainthm:catalan-counting}
    Let $P \subset \C$ be a set of $n$ points with Property~$\Delta_1$. 
    Then $\NC(P)$ is a rank-symmetric graded 
    lattice, and the number of elements with rank $k$ is
    the Narayana number $N_{n,k}$. In particular, $|\NC(P)| = C_n = |\NC_n|$.
\end{mainthm}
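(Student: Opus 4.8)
The plan is to establish, in order, that $\NC(P)$ is a lattice (which requires no hypothesis on $P$), that it is graded, and that the number of its elements with a prescribed number of blocks is a Narayana number; rank-symmetry and $|\NC(P)| = C_n$ then follow formally from the identity $N_{n,k} = N_{n,n+1-k}$ and from $\sum_k N_{n,k} = C_n$.

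First I record the basic structure. For any finite $P$, the common refinement of two noncrossing partitions is again noncrossing, since the convex hull of an intersection of two blocks lies inside each of the two hulls; thus $\NC(P)$ is a meet-semilattice, and as it has a maximum element $\{P\}$ it is a lattice. Two further observations make an induction feasible. In any noncrossing partition a block $B$ satisfies $P \cap \mathrm{conv}(B) = B$: a point of $P$ inside $\mathrm{conv}(B)$ lying in some block $C$ would give $\mathrm{conv}(B) \cap \mathrm{conv}(C) \neq \emptyset$, forcing $C = B$; consequently every point of $P \setminus B$ lies outside $\mathrm{conv}(B)$. And Property~$\Delta_1$ is inherited by every subconfiguration of the form $P \cap \mathrm{conv}(A)$. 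The rank of a partition will be recorded by its number of blocks, with maximal chains running from the partition of $P$ into singletons up to $\{P\}$.

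For gradedness it suffices to show that every covering relation $\sigma \lessdot \pi$ has $|\sigma| = |\pi| + 1$; equivalently, that any proper refinement $\sigma < \pi$ with $|\sigma| \geq |\pi| + 2$ admits an element strictly between. If two distinct blocks of $\pi$ are each refined by $\sigma$, then merging the $\sigma$-blocks contained in one of them produces such an intermediate, the merged hull remaining inside a single $\pi$-block. The remaining case is that a single block $A$ of $\pi$ is split into $m \geq 3$ blocks $B_1, \dots, B_m$ of $\sigma$, and one must merge two of them while keeping the partition noncrossing; disjointness of the merged hull from everything outside $\mathrm{conv}(A)$ is automatic, so the point reduces to the planar statement that among any $m \geq 2$ pairwise disjoint compact convex sets in the plane, some two have convex hull meeting none of the rest. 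Here Property~$\Delta_1$ plays at most an auxiliary role, keeping the $B_i$ in controlled position inside $\mathrm{conv}(A)$.

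The enumeration is the heart of the matter, and I would prove it by induction on $n$ using an extreme point $p$ of $\mathrm{conv}(P)$, partitioning $\NC(P)$ according to the block $B \ni p$. Since $P \setminus B$ lies entirely outside $\mathrm{conv}(B)$ and no block's hull may meet $\mathrm{conv}(B)$, the other blocks distribute among the regions into which $\mathrm{conv}(B)$ cuts $\mathrm{conv}(P)$, and within each region one has an independent noncrossing partition of a strictly smaller configuration that again satisfies Property~$\Delta_1$. If, exactly as for the convex polygon, summing over the admissible $B$ recombines these contributions into the functional equation $F = x(1+F)(q+F)$ for the generating function $F = \sum_{n \geq 1} \big( \sum_k N_{n,k} q^k \big) x^n$ of the Narayana polynomials, then the block-generating polynomial $\sum_{\pi \in \NC(P)} q^{|\pi|}$ equals $\sum_k N_{n,k} q^k$, and rank-symmetry and $|\NC(P)| = C_n$ follow at once. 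The main obstacle is exactly this recombination: when $B$ contains points of $P$ in the interior of $\mathrm{conv}(B)$ --- which Property~$\Delta_1$ permits but bounds --- the ``regions around $\mathrm{conv}(B)$'' are not the clean cyclic gaps of $\NC_n$; some are non-convex and carry a genuine ``avoid $\mathrm{conv}(B)$'' constraint, and the number of interior points is itself variable. The crux is therefore to show that Property~$\Delta_1$ forces the number, sizes, and constraints of these regions, together with the collection of admissible blocks $B$, to reorganize into the polygon recursion independently of the fine geometry of $P$; without the hypothesis a single block's hull can absorb enough points that the induced subconfigurations become too large and the recursion overcounts. (One might instead hope for a block-count-preserving bijection $\NC(P) \to \NC_n$, but these posets are genuinely non-isomorphic in general, so the recursive route seems the more natural one.)
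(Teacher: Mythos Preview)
Your proposal is honest about its own gap, and that gap is real: the recursive enumeration is never carried out. You partition $\NC(P)$ by the block $B$ containing a fixed extreme point $p$, observe that the remaining blocks live in the complementary regions of $\conv(B)$, and then say that ``the crux is to show that Property~$\Delta_1$ forces the number, sizes, and constraints of these regions \dots\ to reorganize into the polygon recursion.'' But you do not show this, and it is not clear that the recursion goes through in any clean way. When $B$ has interior points of $P$ on its hull, the complementary regions are not arcs of a polygon boundary; the admissible blocks $B$ themselves are constrained by the condition $P\cap\conv(B)=B$, and enumerating them is already a nontrivial problem depending on the geometry of $P$. Nothing in your outline explains why the sum over admissible $B$ yields exactly the Narayana recurrence, and I do not see how to fill this in without essentially reproving the theorem by other means.

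The paper takes a completely different route that sidesteps this difficulty. Rather than a static recursion on $n$, it deforms $P$ to a convex configuration by a sequence of elementary moves, each of which pushes a single point across a single line of the arrangement, and shows that every such move can be chosen to preserve Property~$\Delta_1$ (this is the content of Section~\ref{sec:configurations}, culminating in Theorem~\ref{thm:delta-k-connectivity}). The enumerative step is then local: for a single $\Delta_1$-move $m$, one exhibits an explicit rank-preserving bijection $\NC(P)\to\NC(m(P))$, the block-switching map $\BS_m$ of Definition~\ref{def:block-switching}. Property~$\Delta_1$ enters precisely to guarantee (via Lemma~\ref{lem:convex-polygon}) that the region $R_z$ is already cut out by lines through pairs of extremal points, which is what makes the block-switching bijection land in $\NC(m(P))$. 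This deformation argument never needs to understand the global structure of admissible blocks $B$; it only compares two configurations that differ by a single crossing.

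Your treatment of the lattice and graded structure is fine and in fact more careful than the paper's (which simply asserts in Proposition~\ref{prop:ncp-lattice} that the rank function on $\Pi(P)$ descends). But the enumeration is the whole theorem, and your outline stops exactly where the work begins.
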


\begin{figure}
	\centering
    \includegraphics[width=\textwidth]{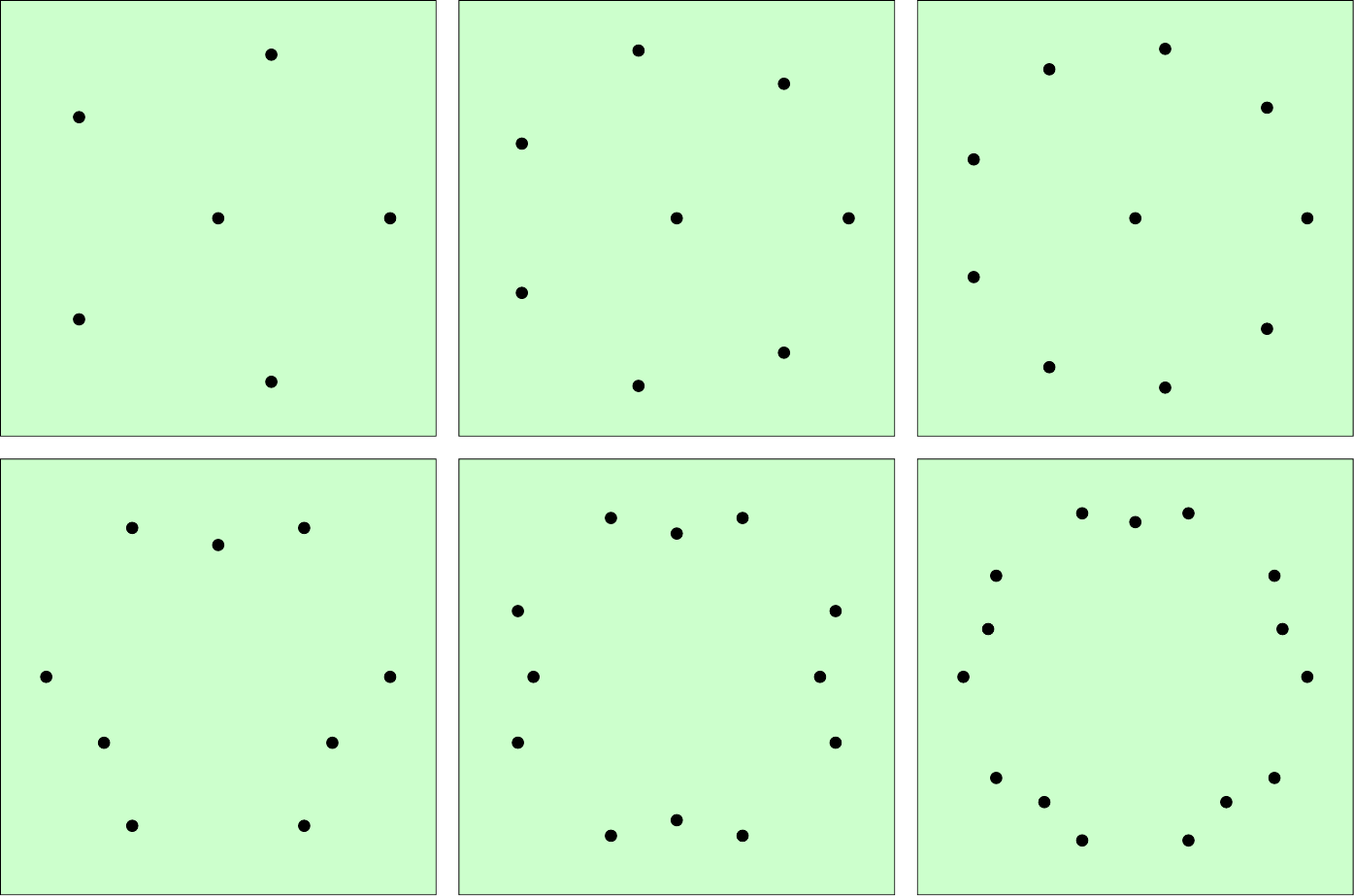}
    \caption{Some examples of configurations which satisfy 
    Property~$\Delta_1$}
    \label{fig:delta1-examples}
\end{figure}

It is worth noting that if $P$ contains a point which lies in the convex
hull of the others, then $\NC(P)$ is not isomorphic to $\NC_n$. Thus, 
Theorem~\ref{mainthm:catalan-counting} introduces a large new class of lattices
with the same number of elements in each rank as the noncrossing partition lattice.

If the conditions on $P$ are weakened to only require 
Property~$\Delta_2$, then the number of noncrossing partitions may increase
compared to those in Theorem~\ref{mainthm:catalan-counting}. Nevertheless, 
some symmetry is preserved.

\begin{mainthm}[Theorem~\ref{thm:rank-symmetry}]
    \label{mainthm:rank-symmetry}
    Let $P \subset \C$ be a set of $n$ points
    with Property~$\Delta_2$. Then $\NC(P)$ is a rank-symmetric graded 
    lattice.
\end{mainthm}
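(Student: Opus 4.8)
The plan is to show successively that $\NC(P)$ is a lattice, that it is graded with rank function $\mathrm{rk}(\pi) = |P| - |\pi|$, and that it admits an order-reversing bijection to itself; the last two statements together force the number of elements of rank $k$ to equal the number of rank $|P|-1-k$. For the lattice property, observe that the common refinement $\sigma \wedge \tau$ of two noncrossing partitions is noncrossing: if $B_1 \cap C_1$ and $B_2 \cap C_2$ are distinct blocks of it, with $B_i$ a block of $\sigma$ and $C_i$ a block of $\tau$, then $B_1 \ne B_2$ or $C_1 \ne C_2$, and either way $\mathrm{conv}(B_1 \cap C_1) \cap \mathrm{conv}(B_2 \cap C_2)$ sits inside an intersection of the hulls of two distinct blocks of one of $\sigma$, $\tau$, hence is empty. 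Therefore $\NC(P)$ is a finite meet-semilattice with the one-block partition as a maximum, so it is a lattice; no hypothesis on $P$ is needed here.

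Gradedness reduces to showing that every covering relation $\pi \lessdot \sigma$ merges exactly two blocks of $\pi$. First, if $\sigma$ merged two or more disjoint groups of blocks of $\pi$, merging only one group would produce a noncrossing partition strictly between $\pi$ and $\sigma$; so $\sigma$ arises from $\pi$ by merging a single group $B_1, \dots, B_r$ into one block $B$, with $r \ge 2$ and all other blocks unchanged. Since $\pi$ refines the noncrossing partition $\sigma$, we have $B = P \cap \mathrm{conv}(B)$ and every other block of $\pi$ avoids $\mathrm{conv}(B)$, so producing an intermediate partition amounts to coarsening a noncrossing partition of a planar point set with $r$ blocks by a single merge of two of them. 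If the hulls of all $r$ blocks meet the boundary of $\mathrm{conv}(B)$, then two cyclically consecutive ones can be merged, because the chord of $\mathrm{conv}(B)$ separating those two from the rest already lies inside $\mathrm{conv}(B)$. The remaining case, where some block lies in the interior of $\mathrm{conv}(B)$, is where Property~$\Delta_2$ enters: the number of interior points it permits should be exactly what is needed to locate a mergeable pair, and a weaker hypothesis such as $\Delta_3$ would fail here. Granting this, $\mathrm{rk}(\pi) = |P| - |\pi|$ grades $\NC(P)$ with top rank $|P| - 1$.

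For rank-symmetry I would imitate the Kreweras complement. Choose a ``dual'' configuration $P^\star$ so that $\widehat P := P \sqcup P^\star$ is in general position and $\NC(P^\star) \cong \NC(P)$, and for $\pi \in \NC(P)$ let $K(\pi)$ be the largest partition of $P^\star$ with $\pi \sqcup K(\pi) \in \NC(\widehat P)$. One then checks: (a) this largest partition exists---the admissible partitions of $P^\star$ form a down-set in $\NC(P^\star)$, so it suffices to exhibit a largest admissible one, which a description of $K(\pi)$ through the connected components of $\mathrm{conv}(P) \setminus \bigcup_{B \in \pi} \mathrm{conv}(B)$ should provide; (b) $K$ reverses the order; (c) $K$ is a bijection, with inverse a ``reverse complement'' of the same type. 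Composing $K$ with an isomorphism $\NC(P^\star) \cong \NC(P)$ yields an anti-automorphism of $\NC(P)$, and since $\NC(P)$ is graded of rank $|P|-1$ this anti-automorphism carries rank $k$ bijectively onto rank $|P|-1-k$, which is rank-symmetry.

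The principal obstacle is the construction of $P^\star$ together with the verification of (a)--(c). For a convex polygon $P^\star$ is the set of edge-midpoints and the argument is classical, but once $\mathrm{conv}(P)$ contains points of $P$ in its interior the dual points must be placed so that on one hand no convex subset of $\widehat P$ acquires more interior points of $\widehat P$ than $\Delta_2$ allows, and on the other hand distinct partitions of $P$ receive distinct complements; arranging both at once is exactly where Property~$\Delta_2$ does its work, and I expect this to be the delicate part. If a clean global choice of $P^\star$ is not available, the alternative is to prove rank-symmetry by induction on $|P|$, using the factorization $[\hat 0, \pi] \cong \prod_{B \in \pi} \NC(B)$ of lower intervals together with an identification of the upper interval $[\pi, \hat 1]$ with $\NC$ of a smaller configuration still satisfying $\Delta_2$; there the difficulty migrates to establishing that identification.
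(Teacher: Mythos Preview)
Your treatment of the lattice and graded properties is essentially fine, but you misplace where Property~$\Delta_2$ enters: the paper shows that $\NC(P)$ is a bounded graded lattice for \emph{every} finite $P\subset\C$ (Proposition~\ref{prop:ncp-lattice}), with no convexity hypothesis whatsoever. Property~$\Delta_2$ is used only for rank-symmetry.

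The rank-symmetry part is a genuine gap. Your Kreweras-complement plan is a wish list rather than a proof: you neither construct $P^\star$ nor justify $\NC(P^\star)\cong\NC(P)$, and you explicitly flag this as ``the delicate part.'' If it succeeded it would yield an anti-automorphism of $\NC(P)$, i.e.\ self-duality, which is strictly stronger than rank-symmetry; the paper does not claim this and remarks that self-duality is ``rarely held by $\NC(P)$'' away from convex position, so at best you are attempting to prove more than is stated with no evidence that the stronger statement holds under~$\Delta_2$. Your fallback fares no better: the required identification of $[\pi,\hat 1]$ with $\NC(Q)$ for a smaller $\Delta_2$-configuration $Q$ already fails in Example~\ref{ex:ncp-lattice}, where the interval above a boundary-edge atom is $\bool_2$, and no three-point configuration in general position realises $\bool_2$. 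The paper's argument is entirely different. It inducts on $|\interior(P)|$ and uses Theorem~\ref{thm:delta-k-connectivity} to connect $P$ to a convex configuration by a sequence of $\Delta_2$-moves. For each move $m$ the block-switching map $\BS_m$ of Definition~\ref{def:block-switching} compares $\NC(P)$ with $\NC(m(P))$; unlike in the $\Delta_1$ case this is not a bijection on noncrossing partitions, and the real work---occupying all of Section~\ref{sec:skewers} and most of Section~\ref{sec:delta-2}---is to show that the failure set $F_m(P)=\{\pi\in\NC(P):\BS_m(\pi)\notin\NC(m(P))\}$ decomposes as a union of centered rank-symmetric skewering intervals whose pairwise overlaps are again centered and rank-symmetric, so that rank-symmetry transfers across each move.
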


\begin{figure}
	\centering
    \includegraphics[width=\textwidth]{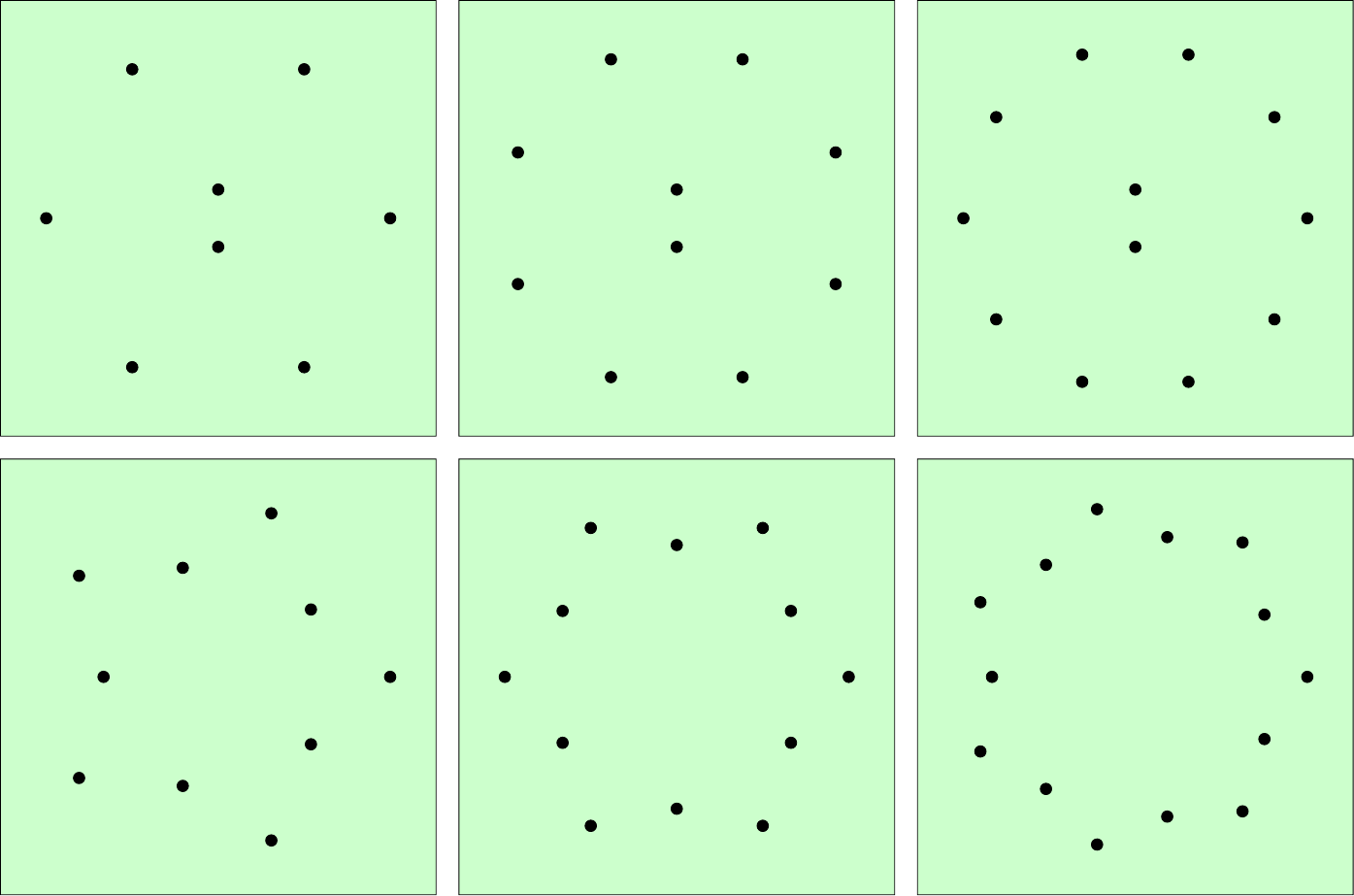}
    \caption{Some examples of configurations which satisfy 
    Property~$\Delta_2$}
    \label{fig:delta2-examples}
\end{figure}

See Figures~\ref{fig:delta1-examples} and \ref{fig:delta2-examples}
for examples of configurations which satisfy Properties~$\Delta_1$ and
$\Delta_2$ respectively.
One can quickly find examples of $P$ without Property~$\Delta_1$ such that
$\NC(P)$ fails to be rank-enumerated by the Narayana numbers and 
examples of $P$ without Property~$\Delta_2$ such that $\NC(P)$ fails to be
rank-symmetric. In fact, $\NC(P)$ can even fail to be graded if $P$ does not have Property~$\Delta_2$
(see Proposition~\ref{prop:graded} and Figure~\ref{fig:ungraded}). The extent to which these properties
are necessary conditions for Theorems~\ref{mainthm:catalan-counting}
and \ref{mainthm:rank-symmetry} is not immediately clear; this is an 
interesting direction for future research.

Some of the techniques used in proving Theorems~\ref{mainthm:catalan-counting} and
\ref{mainthm:rank-symmetry} can be interpreted in a stronger topological
context. Recall that the \emph{(unordered) configuration space of $n$ points in $\C$} 
is the topological space of all $n$-tuples in $\C^n$ with distinct entries, 
considered up to permutations of the coordinates. Also, if $P$ barely fails to be in 
general position (i.e. there is a single triple of collinear points in $P$) but 
otherwise satisfies Property~$\Delta_k$, 
we say that $P$ satisfies the \emph{weak Property~$\Delta_k$}.

\begin{mainthm}[Corollary~\ref{cor:topological-interpretation}]
    \label{mainthm:connectivity}
    Let $k\in \{1,2\}$. The set of all configurations which satisfy the
    weak Property~$\Delta_k$ forms a connected subspace of the configuration 
    space of $n$ points in $\C$.
\end{mainthm}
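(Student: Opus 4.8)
\medskip

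\noindent\textbf{Proof proposal for Corollary~\ref{cor:topological-interpretation}.} Fix $k\in\{1,2\}$ and let $X_k$ denote the set of configurations satisfying the weak Property~$\Delta_k$, viewed as a subspace of the configuration space of $n$ points in $\C$; write $U_k\subseteq X_k$ for the open subset of configurations in general position, so that $X_k\setminus U_k$ consists of configurations with exactly one collinear triple. Because $X_k$ is semialgebraic, connectedness and path-connectedness coincide, so the plan is to fix a single ``normal-form'' configuration $P_k\in U_k$ and to join every point of $X_k$ to $P_k$ by a path inside $X_k$. This will be carried out in two steps: (i) joining each configuration of $U_k$ to $P_k$, and (ii) joining each configuration of $X_k\setminus U_k$ into $U_k$.

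For step (i) I would reinterpret the reduction used in the proofs of Theorems~\ref{mainthm:catalan-counting} and \ref{mainthm:rank-symmetry} as a statement about continuous motions. That argument transforms an arbitrary configuration with Property~$\Delta_k$ into $P_k$ by a finite sequence of elementary moves, each of which slides a single point $p$ of the configuration across a single line spanned by two other points $u,v$ while preserving Property~$\Delta_k$; in particular the configurations immediately before and after such a move lie in $U_k$. I would realize the move by a path that holds all points except $p$ fixed and transports $p$ through the common facet of the two relevant cells of the line arrangement $\{\ell_{uv}\}$ determined by the fixed points, crossing that facet exactly once at a relatively interior point. Along this path the configuration is in general position except at the single instant when $p$ lies on $\ell_{uv}$ and on no other such line, where it has exactly one collinear triple; and at that instant Property~$\Delta_k$ still holds, because a routine case analysis shows that for every convex subset $A$ the number of points of the configuration inside $\conv(A)$ at this instant is no larger than its value on at least one of the two sides of the facet, and the $\Delta_k$ bound holds on both sides. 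Hence the path lies in $X_k$, and concatenating such paths joins the original configuration to $P_k$ inside $X_k$.

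For step (ii), let $P\in X_k\setminus U_k$ have collinear triple $a,b,c$ with $b$ in the open segment $\overline{ac}$, and consider perturbing $b$ slightly off the line $\overline{ac}$; for a generic perturbation to either side the resulting configuration $P'$ is in general position. If the perturbation toward a side $S$ makes $P'$ fail Property~$\Delta_k$, then some convex subset of $P'$ violates the $\Delta_k$ bound, and since the only convex hulls that change are those incident to $b$, a short analysis shows this subset must be a set $A$ with $a,c\in A$ and $b\notin A$, with $\overline{ac}$ a hull edge of $\conv(A)$ and $A$ on the side $S$, such that $\conv(A)$ already contains exactly $|A|+k-3$ points of $P$ in its interior; pushing $b$ off $\overline{ac}$ into $\conv(A)$ then makes this $|A|+k-2$. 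Suppose this occurred for both sides, producing $A_1$ and $A_2$. Then $A_1\cap A_2=\{a,c\}$, the set $V$ of vertices of $\conv(A_1\cup A_2)$ is a convex subset with $|V|\le|A_1|+|A_2|-2$, and the interior of $\conv(V)$ contains $b$ together with all of the points of $P$ interior to $\conv(A_1)$ and all of those interior to $\conv(A_2)$; these $1+(|A_1|+k-3)+(|A_2|+k-3)$ points of $P$ are pairwise distinct, so Property~$\Delta_k$ for $V$ forces $|A_1|+|A_2|+2k-5\le|V|+k-3\le|A_1|+|A_2|+k-5$, i.e.\ $k\le0$ --- a contradiction. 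Hence at least one of the two perturbations lands in $U_k$, so $P$ is joined inside $X_k$ to a configuration of $U_k$, and then to $P_k$ by step (i).

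Steps (i) and (ii) together show that every point of $X_k$ is joined to $P_k$ inside $X_k$, so $X_k$ is path-connected, which proves the corollary. I expect the main difficulty to be step (i), and specifically the verification that the reduction behind Theorems~\ref{mainthm:catalan-counting} and \ref{mainthm:rank-symmetry} can indeed be performed as a sequence of single-triple moves passing only through configurations with Property~$\Delta_k$ (so that the intervening wall configurations satisfy the \emph{weak} Property~$\Delta_k$); this is exactly the point at which the restriction $k\in\{1,2\}$ is used, whereas step (ii) goes through for every $k\ge1$.
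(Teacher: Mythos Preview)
Your approach is essentially the paper's: the corollary is stated there as an immediate topological reinterpretation of Theorem~\ref{thm:delta-k-connectivity}, which supplies exactly the sequence of $\Delta_k$-moves you invoke in step~(i). Your proposal in fact goes further than the paper's brief treatment by explicitly checking that the wall configurations encountered during a move satisfy the weak Property~$\Delta_k$, and by supplying step~(ii) to perturb a configuration with a single collinear triple back into general position; the counting argument you give there is correct and, as you note, works for every $k\ge 1$.
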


We are unaware of any prior appearances of the space
described in Theorem~\ref{mainthm:connectivity}. It would be interesting to 
know the homology of this space for each $k$ and, in particular, whether it 
is a classifying space for the $n$-strand braid group.

The article is structured as follows. In 
Section~\ref{sec:noncrossing-partitions}, we introduce some 
background on posets and partitions, along with basic 
properties of $\NC(P)$. Section~\ref{sec:configurations} concerns the 
transformation of configurations with Property~$\Delta_k$ and includes the proof of
Theorem~\ref{mainthm:connectivity}. We provide some technical details on ``skewers'' in
Section~\ref{sec:skewers}, then give the proofs of Theorem~\ref{mainthm:catalan-counting} in 
Section~\ref{sec:delta-1} and Theorem~\ref{mainthm:rank-symmetry} in Section~\ref{sec:delta-2}.

\section{Noncrossing Partitions}
\label{sec:noncrossing-partitions}

To start, we establish some basic definitions and properties for partitions,
posets, and configurations - see \cite{stanley-enum-vol-1} for a
standard reference.
Recall that a \emph{partition} expresses a set $S$ as the union of a collection 
of pairwise disjoint subsets of $S$ (called \emph{blocks}). The set of all
partitions for a fixed set $S$ forms a partially ordered set under refinement:
one partition lies ``below'' another in the partial order if each block in the
latter partition can be obtained as a union of blocks in the former. This
partially ordered set is a \emph{lattice} in the sense that each pair of
elements has a unique meet (greatest lower bound) and a unique join (least
upper bound). Let $\Pi(S)$ denote the lattice
of partitions for $S$; in the standard case where
$S = \{1,\ldots,n\}$, the associated partition lattice is denoted $\Pi_n$.

The partition lattice $\Pi(S)$ is \emph{bounded} in the sense that it 
contains a unique minimum element $\hat{0}$
(in which each block is a singleton) and a unique maximum element
$\hat{1}$ (in which all of $S$ belongs to the same block). The partition
lattice is also \emph{graded}: if $|S| = n$ and we let $bl(\pi)$ denote the number
of blocks in a partition $\pi \in \Pi(S)$, then the map $\rho \colon \Pi(S) \to \mathbb{N}$
given by $\rho(\pi) = n - bl(\pi)$ is a \emph{rank function} for this lattice.
Note that the minimum $\hat{0}$ and maximum 
$\hat{1}$ have ranks $0$ and $n-1$ respectively. The \emph{atoms} and \emph{coatoms} 
of this lattice are defined to be the elements of rank $1$ and $n-2$ respectively.

Our main object of study in this article is a subposet of the partition lattice
for a finite set of points in the complex plane.

\begin{defn}
    Fix $P\subset \mathbb{C}$ finite. For any $A \subseteq P$,
    the \emph{convex hull} of $A$, denoted $\conv(A)$, is the smallest convex
    subset of $\mathbb{C}$ which contains $A$. Note that $\conv(A)$ is 
    a convex polygon with up to $|A|$ vertices. A partition of $P$ is
    \emph{noncrossing} if the convex hulls of its blocks are pairwise disjoint.
    The set of all noncrossing partitions for $P$ forms a subposet of the
    partition lattice $\Pi(P)$, and we refer to this subposet as $\NC(P)$.
\end{defn}

\begin{example}\label{ex:ncp-lattice}
    Let $P = \{z_1,z_2,z_3,z_4\}$ be a set of points in $\mathbb{C}$ such that 
    $z_1$, $z_2$, and $z_3$ form the vertices of a triangle which contains $z_4$
    in its interior. Then every partition of $P$ is noncrossing except for
    $\{\{z_1,z_2,z_3\},\{z_4\}\}$, so the noncrossing partition lattice 
    $\NC(P)$ has 14 elements, arranged according to the diagram in 
    Figure~\ref{fig:ncp-ex-1}. Note that $\NC(P)$ has the same size as the
    classical noncrossing partition lattice $\NC_4$, but the two are not isomorphic
    since $\NC(P)$ has 15 maximal chains, while $\NC_4$ has 16.
\end{example}

\begin{figure}
	\centering
    \includegraphics{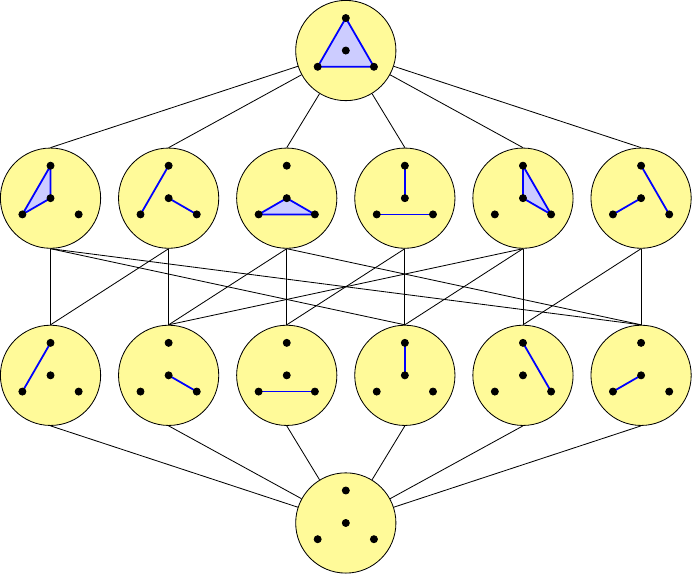}
    \caption{The lattice of noncrossing partitions for a particular arrangement of
    four points in $\mathbb{C}$}
    \label{fig:ncp-ex-1}
\end{figure}

As a poset, the noncrossing partitions of $P$ inherit some useful properties
from the larger partition lattice $\Pi(P)$.

\begin{prop}\label{prop:ncp-lattice}
    If $P\subset \C$ is finite, then $\NC(P)$ is a bounded lattice.
\end{prop}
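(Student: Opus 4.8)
The plan is to exhibit $\NC(P)$ as a subposet of $\Pi(P)$ that is closed under the lattice operations in the appropriate way, and then transport the grading. First I would observe that $\NC(P)$ inherits the bounds for free: the minimum partition $\hat 0$ into singletons has blocks with pairwise disjoint (in fact one-point) convex hulls, and the maximum partition $\hat 1$ with a single block vacuously has pairwise disjoint hulls, so both lie in $\NC(P)$ and remain the minimum and maximum there. This gives boundedness immediately.

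Next I would establish that $\NC(P)$ is a lattice. The key structural observation is that $\NC(P)$ is closed under the meet operation of $\Pi(P)$: if $\pi$ and $\sigma$ are noncrossing, then the blocks of $\pi \wedge \sigma$ are the nonempty intersections $B \cap C$ with $B$ a block of $\pi$ and $C$ a block of $\sigma$, and $\conv(B\cap C) \subseteq \conv(B) \cap \conv(C)$; since distinct blocks of $\pi \wedge \sigma$ lie in distinct blocks of $\pi$ or of $\sigma$, their convex hulls are separated by one of the two partitions, so $\pi \wedge \sigma$ is again noncrossing. Thus the meet in $\Pi(P)$ of two elements of $\NC(P)$ lies in $\NC(P)$, and hence serves as the meet in $\NC(P)$. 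A standard poset fact then applies: a subposet of a lattice that contains $\hat 1$ and is closed under meets is itself a lattice, where the join of $\pi$ and $\sigma$ is computed as the meet (in $\NC(P)$) of all noncrossing partitions lying above both. This is where a little care is needed — the join in $\NC(P)$ need not coincide with the join in $\Pi(P)$ — but the existence argument is purely order-theoretic once closure under meets and the presence of $\hat 1$ are in hand.

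Finally, for gradedness I would show that the rank function $\rho(\pi) = n - bl(\pi)$ inherited from $\Pi(P)$ restricts to a rank function on $\NC(P)$. It suffices to check that every covering relation in $\NC(P)$ changes $\rho$ by exactly $1$, i.e. that if $\pi < \sigma$ in $\NC(P)$ then there is a noncrossing partition strictly between them whenever $bl(\pi) - bl(\sigma) \geq 2$. If $\sigma$ is obtained from $\pi$ by merging a collection of blocks, pick any block $D$ of $\sigma$ that is the union of at least two blocks of $\pi$, and merge just two of those $\pi$-blocks whose convex hulls can be "first" merged — concretely, two blocks $B_1, B_2$ among those comprising $D$ such that $\conv(B_1 \cup B_2)$ is disjoint from the hulls of the remaining blocks of $\pi$. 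Such a pair exists because $\conv(D)$ is a polygon and one may take two blocks adjacent along its boundary; merging them yields an intermediate noncrossing partition. Combined with boundedness, this shows all maximal chains have the same length $n - 1$, so $\NC(P)$ is graded by $\rho$.

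The main obstacle is the gradedness step: verifying that an intermediate merge can always be performed \emph{without creating a crossing} requires a genuine planar-geometry argument about the boundary structure of $\conv(D)$ relative to the other blocks, rather than a formal manipulation. Everything else — boundedness and the lattice property — follows from closure under meets together with routine order-theoretic bookkeeping.
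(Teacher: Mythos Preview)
Your approach is essentially the paper's: boundedness via $\hat 0,\hat 1\in\NC(P)$, the lattice property via closure under the meet of $\Pi(P)$ together with the standard ``finite bounded meet-semilattice $\Rightarrow$ lattice'' fact (the paper cites \cite[Prop.~3.3.1]{stanley-enum-vol-1}), and gradedness by restricting the rank function $\rho(\pi)=n-bl(\pi)$ from $\Pi(P)$. The only real difference is emphasis on the last step: the paper asserts in one line that the rank function ``descends'' to $\NC(P)$ without further comment, while you correctly isolate this as the place where a planar argument is needed and sketch one. Your instinct is right---showing that every cover in $\NC(P)$ merges exactly two blocks is not purely formal---so your proposal is, if anything, more scrupulous than the published proof on this point. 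One caution about your sketch: two blocks contributing adjacent vertices to $\partial\,\conv(D)$ can still have a convex hull that swallows a third block, so ``boundary-adjacent'' is not quite the right criterion; but you already flag this as the residual obstacle.
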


\begin{proof}
    Since the minimum and maximum elements of $\Pi(P)$ are noncrossing, 
    we know that $\NC(P)$ is bounded. 
    To show that $\NC(P)$ is a lattice,
    we need only prove that $\NC(P)$ is a meet-semilattice (i.e. that
    each pair of elements in $\NC(P)$ has a unique meet) by a standard
    property of finite bounded posets \cite[Prop 3.3.1]{stanley-enum-vol-1}. 
    First, suppose that $\pi \leq \pi'$ in $\Pi(P)$. If $\pi'$ is noncrossing
    but $\pi$ has a pair of crossing blocks $A,B \in P$, then $\pi'$ must have
    a block which contains the union $A\cup B$. It follows that if $\pi_1,\pi_2\in \NC(P)$, 
    then the meet $\pi_1 \wedge \pi_2$ must be noncrossing; if it had a pair of 
    crossing blocks $A$ and $B$, then $\pi_1$ and $\pi_2$ would each have blocks
    containing $A\cup B$ by the argument above and thus $\pi_1 \wedge \pi_2$ would have
    a block containing $A\cup B$, a contradiction.
    Therefore, $\NC(P)$ is a meet-semilattice and thus a lattice.
\end{proof}

Although $\Pi(P)$ is graded for every configuration $P$, it is possible to construct
$P$ so that the poset $\NC(P)$ is not graded. In this article, however, we
are interested in configurations satisfying particular conditions (Properties~$\Delta_1$
and $\Delta_2$, defined in Section~\ref{sec:configurations}), and we will see in
Section~\ref{sec:skewers} that $\NC(P)$ is graded in these cases.

We close the section with a few important examples and some observations.

\begin{example}\label{ex:ncp-classical}
    If $P \subset \C$ with $|P| = n$ such that each point in $P$
    lies on the boundary of $\conv(P)$ (i.e. $P$ is in \emph{convex position}),
    then $\NC(P)$ is isomorphic to the classical \emph{noncrossing partition lattice}
    $\NC_n$, initially defined by Kreweras \cite{kreweras72} - see 
    Figure~\ref{fig:ncp-classical}. In addition to 
    the properties outlined in Proposition~\ref{prop:ncp-lattice}, Kreweras
    showed that the size of $\NC_n$ is equal to the \emph{Catalan number}
    $C_n = \frac{1}{n+1}\binom{2n}{n}$ and, in particular, the number of
    partitions in  $\NC_n$ with $k$ blocks is the \emph{Narayana number}
    $N_{n,k} = \frac{1}{n} \binom{n}{k} \binom{n}{k-1}$. For more information
    on the combinatorial significance of these connections, see \cite{stanley-catalan}.
\end{example}

\begin{figure}
	\centering
    \includegraphics{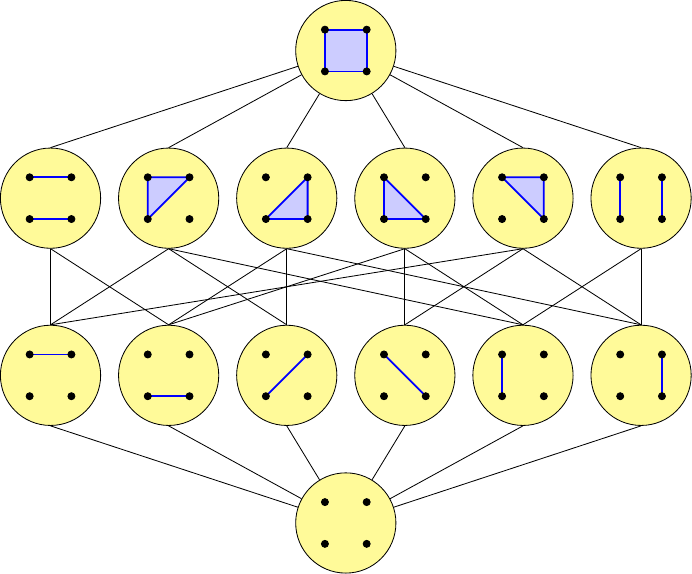}
    \caption{The classical noncrossing partition lattice $\NC_4$}
    \label{fig:ncp-classical}
\end{figure}

Noting that $N_{n,k} = N_{n,n-k}$ for all $1\leq k \leq n$, one can see
that $\NC_n$ is a \emph{rank-symmetric} lattice. In fact, the classical 
noncrossing partition lattice is \emph{self-dual} in the sense that it admits 
a bijection $\alpha\colon\NC_n \to \NC_n$ with the property that 
$\pi_1 \leq \pi_2$ if and only if $\alpha(\pi_2) \leq \alpha(\pi_1)$
\cite{simion-ullman91}. However, this stronger condition is rarely held
by $\NC(P)$ more generally.

\begin{rem}\label{rem:not-isomorphic}
    Nica and Speicher showed in 1997 that intervals in the noncrossing
    partition lattice $\NC_n$ are isomorphic to products of smaller
    noncrossing partition lattices \cite{nica-speicher}. With this in mind, 
    we note that if $P$ is a set of $n$ points of $\C$ in general position
    and if some point of $P$ lies in the convex hull of the others, then
    $\NC(P)$ has an interval which is isomorphic to the lattice described
    in Example~\ref{ex:ncp-lattice}, which cannot be expressed as a
    product of noncrossing partition lattices. Therefore, $\NC(P)$ is
    only isomorphic to $\NC_n$ if the elements of $P$ form the vertices
    of a convex $n$-gon.
\end{rem}

\begin{example}\label{ex:ncp-boolean}
    If $P\subset \C$ with $|P| = n$ such that all points in $P$ are
    collinear, then $\NC(P)$ is isomorphic to the \emph{Boolean lattice}
    $\bool_{n-1}$, which is defined as the set of all subsets of a set
    with $n-1$ elements, partially ordered under inclusion. To see this, 
    observe that each partition in $\NC(P)$ is determined precisely 
    by choosing a subset of the $n-1$ gaps between the $n$ points; 
    see Figure~\ref{fig:ncp-boolean}. 
\end{example}

\begin{figure}
	\centering
    \includegraphics{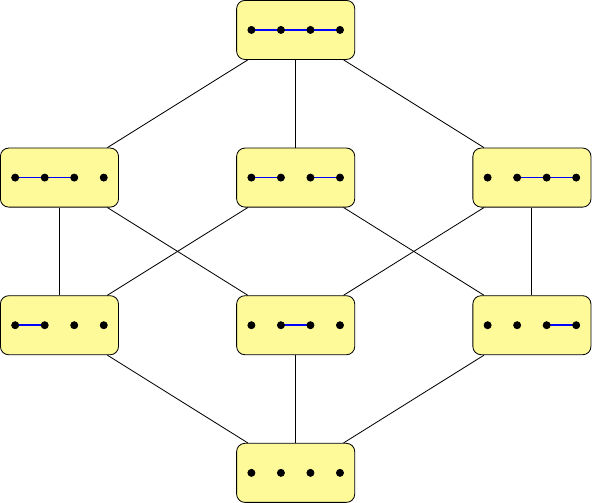}
    \caption{The Boolean lattice $\bool_{n-1}$ arises as the set of noncrossing partitions
    for a configuration of $n$ collinear points.}
    \label{fig:ncp-boolean}
\end{figure}

When $|P|=4$, there are only four possibilities for $\NC(P)$ (up to
isomorphism), and three of them are depicted in the preceding figures.
All three (indeed, all four) possess several useful lattice properties,
including rank-symmetry, self-duality, and a simple counting formula.
However, these properties do not always hold for larger sizes of $P$.

\begin{example}
    If $P$ consists of five points in general position with three points
    on the boundary of the convex hull and two points in the interior,
    then $|\NC(P)| = 43$ (whereas $|\NC_n| = 42$), although $\NC(P)$ 
    remains rank-symmetric. Furthermore, if $P$ consists of six
    points in general position, arranged so that the three extremal points 
    form an equilateral triangle and the three interior points form a
    shrunken equilateral triangle with the same center, then $\NC(P)$
    is not rank-symmetric: it has 15 atoms (rank 1) and coatoms (rank 4), but 
    55 elements at rank 2 and 57 elements at rank 3.
\end{example}

\section{Configurations}
\label{sec:configurations}

Before moving on to the main theorems, we introduce some tools for studying
the geometry of planar configurations, by which we mean finite unordered
sets of points in the Euclidean plane. We begin with some helpful terminology,
partially inspired by \cite{edelman-reiner}. Throughout this section, let $P$ 
denote a configuration of $n$ points in $\mathbb{C}$ in general position 
(i.e. no three points in $P$ are collinear), unless otherwise specified. 

\begin{defn}
    Let $A \subseteq P$ and recall that $\conv(A)$ denotes the convex hull
    of the points in $A$. Define the
    \emph{closure} $\overline{A}$ by $\conv(A) \cap P$
    and the \emph{interior} $\interior(A)$ to be $\interior(\conv(A)) \cap P$.
    We say that $A$ is \emph{convex} if $\interior(A)\cap A$ is empty. 
    Also, a point $p\in P$ is \emph{internal} if $p\in\interior(P)$ and
    \emph{extremal} otherwise. 
\end{defn}

\begin{defn}
    A configuration
    $P$ in general position satisfies \emph{Property~$\Delta_k$} if, 
    for every convex subset $A$ in $P$, the interior $\interior(A)$
    contains at most $(|A|-3) + k$ points.   
    Equivalently, $P$ has Property~$\Delta_k$ if $P$ is in general position and
    each subset $B\subseteq P$ (not necessarily convex) has at most 
    $\lfloor \frac{|B|-3+k}{2} \rfloor$ internal points. Finally, we say that $P$ instead has the 
    \emph{weak Property~$\Delta_k$} if it satisfies the same convexity criteria,
    but has at most one instance of three collinear points.
\end{defn}

It is worth noting that Property~$\Delta_1$ is equivalent to a simpler 
condition which is easier to check: for any $A\subseteq P$ with 
$|A|=3$, we have $|\interior(A)| \leq 1$. To see this, consider that
each convex subset of $m$ points in $P$ forms the vertices of a convex 
$m$-gon, and any triangulation of this polygon consists of $m-2$ triangles;
Property~$\Delta_1$ is equivalent to the requirement that each
of those $m-2$ triangles has at most one point of $P$ in its interior.
Unfortunately, this does not generalize to Property~$\Delta_k$ when
$k > 1$.

If $P$ satisfies Property~$\Delta_k$, then any small perturbation of $P$
will also satisfy Property~$\Delta_k$ since $P$ is in general position. 
However, deformations which move a point in $P$ across the line between two
other points in $P$ might not preserve this property. The main goal of this 
section is to provide some tools for moving points in $P$ while preserving
Property~$\Delta_k$. To start, we establish some terminology for the lines
connecting points in $P$.

\begin{figure}
    \centering
    \includegraphics[width=\textwidth]{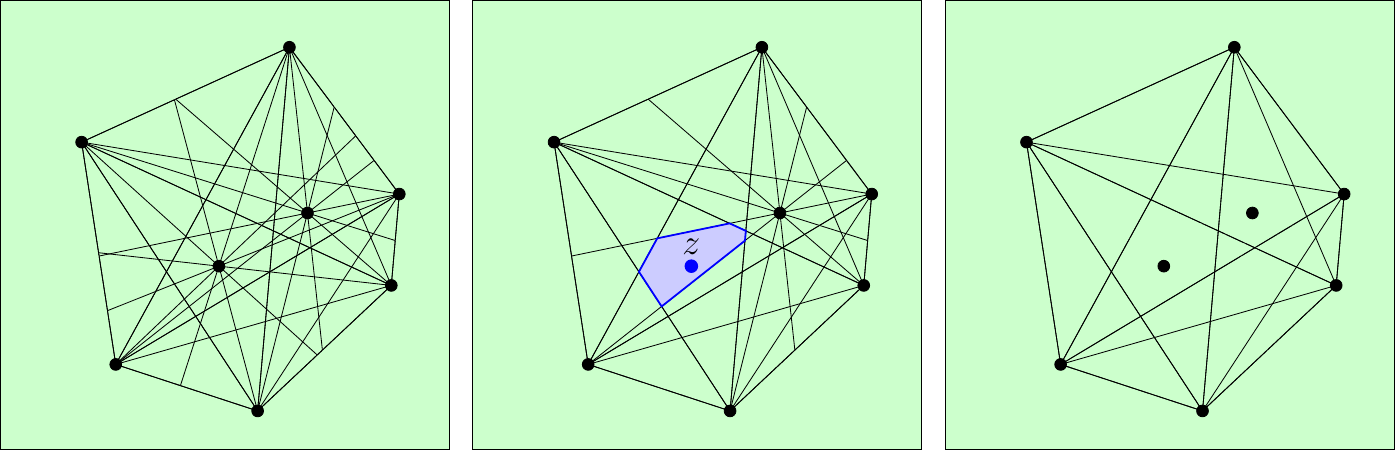}
    \caption{From left to right: an arrangement $\mathcal{A}$, 
    the subarrangement $\mathcal{A}^z$ with the region $R_z$ highlighted,
    and the subarrangement $\aext$ of lines between pairs of extremal points.
    In each image, only the core of each line has been drawn.}
    \label{fig:z-region}
\end{figure}

\begin{defn}
    Each pair of distinct points in $P$
    determines a line in $\mathbb{C}$; let $\mathcal{A}$ denote the
    arrangement of the $\binom{n}{2}$ lines obtained in this way.
    If $\ell$ is the line obtained from the points $z$ and $w$ in $P$, then 
    we say that $z$ and $w$ are the \emph{endpoints} of $\ell$
    and write $V(\ell) = \{z,w\}$. We also refer to the line segment
    between $z$ and $w$ as the \emph{core} $c(\ell) = \conv(V(\ell))$ of $\ell$.
    More generally, we write $V(\ell_1,\ldots,\ell_k)$ to mean the $2k$-element 
    set of endpoints belonging to the lines $\ell_1,\ldots,\ell_k$ and  
    we write $c(\ell_1,\ldots,\ell_k)$ to mean the convex hull $\conv(V(\ell_1,\ldots,\ell_k))$.
\end{defn}

\begin{defn}    
    For each $z$ in $\interior(P)$, let $\mathcal{A}^z$ denote the 
    subarrangement of $\mathcal{A}$ obtained by deleting the lines which pass through
    $z$. Also, define $\aext$ to be the subarrangement of $\mathcal{A}$ which
    consists of all lines with two extremal endpoints, i.e. the intersection
    of all $\mathcal{A}^z$ for $z\in \interior(P)$.
    We associate two regions to each $z\in \interior(P)$: the connected component
    of $\mathbb{C}-\mathcal{A}^z$ containing $z$ (which we denote $R_z$) and the
    connected component of $\mathbb{C} - \aext$ containing $z$
    (denoted $R_z^\text{ex}$). Note that each region is a convex polygon since
    it is a bounded subset of the plane determined by removing some number of half-planes.
    Finally, we say that a line in $\mathcal{A}$ is \emph{separating} if
    it has points from $\interior(P)$ on either side of it, and
    a \emph{boundary} line is one which contains a boundary
    edge for the convex hull $\conv(P)$. 
\end{defn}

For the sake of clarity, we will typically illustrate the
line arrangement $\mathcal{A}$ by its intersection
with the convex hull of $P$ - see Figure~\ref{fig:z-region}
for an example.

\begin{defn}\label{def:move}
    A \emph{move} is a bijection $m\colon P \to m(P)$ such that $m$
    fixes all of $P$ except some element $z$, which is instead sent
    to a point $m(z)$ in the interior of a region
    adjacent to $R_z$. If $\ell$ is a line in the arrangement $\mathcal{A}$
    which separates the regions $R_z$ and $R_{m(z)}$, then we say
    that $m$ \emph{moves $z$ across $\ell$}. If both $P$ and $m(P)$ satisfy Property 
    $\Delta_k$, we say that $m$ is a \emph{$\Delta_k$-move}.
    Finally, note that $m$ induces an isomorphism $m_*\colon \Pi(P) \to \Pi(m(P))$ by 
    replacing $z$ with $m(z)$ in each partition.
\end{defn}

It is worth noting that for any $z\in P$, we can replace $z$ with any 
other point in the region $R_z$ without changing the isomorphism type of
$\Pi(P)$, so moves on $P$ can be described solely by the regions involved.

\begin{defn}
    Let $z\in P$. If $\ell$ is 
    a line in the arrangement $\mathcal{A}$ which contains a side of the region
    $R_z$, then we say that $\ell$ is \emph{adjacent} to $z$.
    This line determines two open half-planes: $H^+_{z,\ell}$, which contains
    $z$, and $H^-_{z,\ell}$, which does not. 
\end{defn}

\begin{figure}
    \centering
    \includegraphics[width=0.5\textwidth]{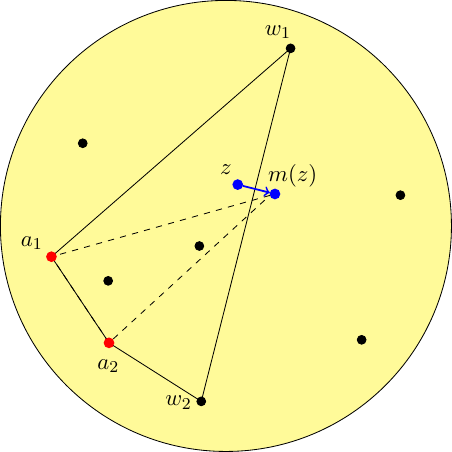}
    \caption{If $m\colon P \to m(P)$ is a move which takes $z$ across a non-separating 
    edge between extremal points $w_1$ and $w_2$, and if the triple $\{a_1,a_2,m(z)\}$ 
    (indicated with dashed lines) has more than one point in its interior, then the 
    quadrilateral $\{a_1,a_2,w_1,w_2\}$ (indicated with solid lines) has
    at least three points in its interior.}
    \label{fig:non-separating-move}
\end{figure}

The following lemmas establish two useful cases of $\Delta_k$-preserving moves.

\begin{lem}\label{lem:delta-k-move}
    If $P$ has Property~$\Delta_k$ and $m\colon P \to m(P)$ moves 
    $z\in \interior(P)$ across a non-separating line in $\aext$, then
    $m(P)$ has Property~$\Delta_k$ as well.
\end{lem}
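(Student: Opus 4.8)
The plan is to argue by contradiction: suppose $m(P)$ fails Property~$\Delta_k$, so there is a convex subset $A' \subseteq m(P)$ with $|\interior_{m(P)}(A')| \geq (|A'| - 3) + k + 1$. Since $P$ has Property~$\Delta_k$, the only way this can happen is if the violating subset $A'$ genuinely involves the moved point's new position — that is, $m(z) \in A'$ and the corresponding set $A = (A' \setminus \{m(z)\}) \cup \{z\}$ in $P$ either is not convex, or has fewer interior points, or $z$ itself now sits in the interior. The key geometric fact is that $m$ takes $z$ across a single non-separating line $\ell \in \aext$, whose endpoints $w_1, w_2$ are both \emph{extremal}; because $\ell$ is non-separating, all of $\interior(P)$ lies (weakly) on one side of $\ell$, in $H^-_{z,\ell}$, so crossing $\ell$ moves $z$ to the side containing no other internal points.

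The main step is to convert a hypothetical violation for $A'$ into a violation for $P$ by \textbf{replacing $m(z)$ with the two extremal endpoints $w_1, w_2$}. This is exactly the mechanism depicted in Figure~\ref{fig:non-separating-move}: I would show that if the triple or convex set $A' \ni m(z)$ has too many interior points, then the set $B = (A' \setminus \{m(z)\}) \cup \{w_1, w_2\}$, taken in $P$, is convex (its convex hull being the quadrilateral-like region obtained by swinging $m(z)$ out past the edge $c(\ell)$) and has at least $|\interior_{m(P)}(A')| - ?$ interior points, while $|B| = |A'| + 1$. Tracking the bookkeeping: $m(z)$ was interior to $A'$ only if $A'$ straddles $\ell$, and replacing one straddling point by the two edge-points $w_1,w_2$ adds exactly one to the cardinality; meanwhile every point of $P$ that was interior to $\conv(A')$ remains interior to $\conv(B)$ (since $\conv(B) \supseteq \conv(A')$ on the relevant side, and $z$'s old position, now not in the set, either falls into the interior or not — either way it doesn't decrease the count by more than is compensated). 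Running the inequality $|\interior(B)| \geq (|A'|-3)+k+1 = (|B| - 3) + k$ shows $B$ is already a $\Delta_k$-violator for $P$ — and if the inequality is strict, a $\Delta_{k}$-violator with room to spare — contradicting the hypothesis. The sub-case where $A'$ is convex but $A$ (with $z$ in place of $m(z)$) is \emph{not} convex must be handled too: there $z \in \interior(A)$, which forces one of the points of $A' \setminus \{m(z)\}$ to lie on the far side of $\ell$ from $\interior(P)$; but that region is empty of internal points of $P$, and extremal points there can be absorbed into $B$ similarly.

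I expect the main obstacle to be the careful case analysis around \emph{which} points of $P$ change status (extremal vs.\ internal, interior-of-$A'$ vs.\ not) when $z$ is replaced by $m(z)$, and in particular verifying that passing to $B = (A' \setminus \{m(z)\}) \cup \{w_1,w_2\}$ never loses an interior point while always gaining exactly one element of cardinality. The non-separating hypothesis is what makes this work: it guarantees $R_{m(z)}$ sits outside the convex hull of the internal points, so no internal point of $P$ gets "uncovered" by the swap, and the extremality of $w_1, w_2$ ensures they are legitimate new vertices of $B$ rather than points that were already interior. Once the replacement is set up correctly, the arithmetic $(|B|-3)+k = (|A'|-3+k)+1 > |\interior_{m(P)}(A')| = |\interior_P(B)|$ is immediate from $|B| = |A'|+1$, closing the contradiction.
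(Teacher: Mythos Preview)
Your overall strategy matches the paper's: assume a violating convex set $A'\subseteq m(P)$, swap $m(z)$ for the endpoints $w_1,w_2$ of $\ell$ to build a set $B\subseteq P$, and derive a contradiction. But the arithmetic has a genuine off-by-one gap. With $|B|=|A'|+1$ and only $|\interior_P(B)|\ge|\interior_{m(P)}(A')|$, the hypothesis $|\interior_{m(P)}(A')|\ge(|A'|-3)+k+1$ yields merely $|\interior_P(B)|\ge(|B|-3)+k$, which is \emph{not} a violation of Property~$\Delta_k$ (equality is permitted); your final displayed inequality in fact has the direction reversed. The missing ``$+1$'' has to come from $z$ itself. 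The paper secures it by also adjoining $z$ to $B$ and working in the equivalent formulation in which every subset $B$ must satisfy $|\interior(B)|\le\lfloor(|B|-3+k)/2\rfloor$ (so $B$ need not be convex): then $|B|\le|A|+2$ while $|\interior(B)|\ge|\interior(A)|+1$ because $z$ is a new internal point of $B$, and the floor absorbs the $+2$ in cardinality against the $+1$ in internal points. You allude to $z$ possibly ``falling into the interior'' but then explicitly decline to use it in the count---that is precisely the step you cannot skip.

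There is also an unhandled case. You assume $m(z)\in A'$, but in the convex formulation the violating $A'$ might instead have $m(z)\in\interior_{m(P)}(A')$ with $m(z)\notin A'$; then $A'\subseteq P$ already, and the issue is that passing from $m(P)$ to $P$ drops $m(z)$ from the interior count without necessarily gaining $z$. The paper disposes of this separately: when $m(z)$ is internal and $z\notin\conv(A)$, the edge of $\conv(A)$ separating them must lie on $\ell$, forcing $w_1,w_2\in A$ and $\conv(A)\subseteq H^+_{m(z),\ell}$; but the non-separating hypothesis makes $m(z)$ the \emph{only} internal point of $m(P)$ on that side, which is too few to exceed the bound. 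Your ``sub-case where $A$ is not convex'' concerns the situation $m(z)\in A'$ with $z\in\conv(A'\setminus\{m(z)\})$, which is a different configuration and does not cover this.
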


\begin{proof}
    Let $m\colon P \to m(P)$ be a move which takes $z$ across a
    line $\ell$ in $\aext$, and let $w_1$ and $w_2$ be the endpoints of $\ell$. 
    Suppose for the sake of contradiction that $m(P)$ does not satisfy
    Property~$\Delta_k$; then there is a subset $A \subseteq m(P)$
    with $|\interior(A)| > \lfloor\frac{|A|-3+k}{2}\rfloor$. Since 
    $P$ satisfies Property~$\Delta_k$, we know that $\overline{A}$ 
    must contain $m(z)$ but not $z$.

    First, note that the supposed bound on $|\interior(A)|$ precludes $m(z)$ from
    being an internal point of $A$. If it were internal, then $A$ would necessarily contain
    $w_1$ and $w_2$, and $A$ would therefore be a subset of $H^+_{m(z),\ell}$
    since $z \not\in \overline{A}$. However, $m(z)$ is the unique internal point of 
    $m(P)$ in $H^+_{m(z),\ell}$ since we assumed $\ell$ was non-separating in the initial 
    configuration $P$, so the inequality would not hold. 

    Thus, $m(z)$ is an extremal point of $A$. Define $B = (A-\{m(z)\})\cup \{z,w_1,w_2\}$; then
    $|B| \leq |A| + 2$ and $|\interior(B)| \geq |\interior(A)| + 1$ (since $z$ is an internal
    point for $B$ but not $A$), and we can combine these to find the following chain of inequalities:
    \[
        |\interior(B)| \geq |\interior(A)| + 1 > \left\lfloor\frac{|A|-3+k}{2}\right\rfloor + 1
        \geq \left\lfloor\frac{|A|-3+k+2}{2}\right\rfloor \geq \left\lfloor\frac{|B|-3+k}{2}\right\rfloor.
    \]
    Since $B$ is a subset of $P$, this contradicts our assumption that $P$ has
    Property~$\Delta_k$ - see Figure~\ref{fig:non-separating-move} for an
    example when $k=1$. Therefore, $m(P)$ must satisfy Property~$\Delta_k$ and we are done.
\end{proof}

\begin{figure}
    \centering
    \includegraphics[width=0.5\textwidth]{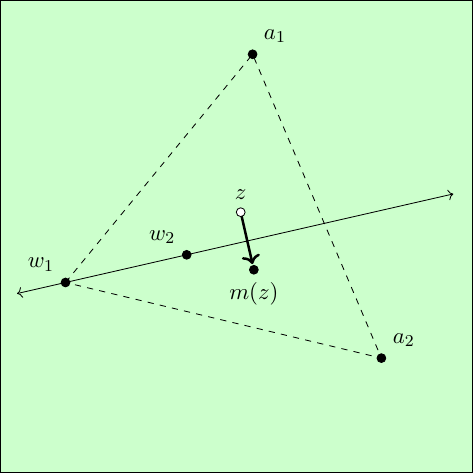}
    \caption{If $w_2$ is an internal point and $m$ is a move which takes $z$ across the line
    containing $w_1$ and $w_2$, then there are extremal points $a_1$ and $a_2$ such that the
    convex hull of $a_1$, $a_2$, and $w_1$ (depicted with dashed lines) 
    has $w_2$, $z$, and $m(z)$ in its interior.}
    \label{fig:internal-endpoint}
\end{figure}

\begin{lem}\label{lem:delta-k-internal-endpoint}
    If $P$ has Property~$\Delta_k$ and $m\colon P \to m(P)$ moves $z\in \interior(P)$
    across a line in $\mathcal{A}^z$ with at least one internal endpoint,
    then $m(P)$ has Property~$\Delta_k$ as well.
\end{lem}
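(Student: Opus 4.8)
The plan is to mimic the contradiction argument of Lemma~\ref{lem:delta-k-move}. Relabel the endpoints of the crossed line $\ell$ so that $w_2$ is internal, and suppose toward a contradiction that $m(P)$ violates Property~$\Delta_k$; then there is $A\subseteq m(P)$ with $|\interior(A)|>\lfloor\frac{|A|-3+k}{2}\rfloor$. Just as before, if $m(z)\notin\conv(A)$ then $A$ is actually a subset of $P$ with the same interior, and hence violates the $\Delta_k$ bound for $P$ itself, a contradiction; so we may assume $m(z)\in\overline{A}$ (while $z\notin\overline{A}$ trivially, since $z\notin m(P)$). The goal is then to convert $A$ into a subset $B\subseteq P$ that still overflows the bound.

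The geometric core of the proof is the assertion illustrated in Figure~\ref{fig:internal-endpoint}: there exist extremal points $a_1,a_2\in P$ so that the triangle $T=\conv(\{w_1,a_1,a_2\})$ has $w_2$, $z$, and $m(z)$ in its interior. To establish this I would use that the move takes $z$ across $\ell$, so that $z$ and $m(z)$ lie on opposite sides of $\ell$ and may be taken arbitrarily close to a common point $q$ in the relative interior of the edge of $R_z$ lying on $\ell$ (such a replacement does not affect whether $P$ or $m(P)$ satisfies Property~$\Delta_k$). Since $w_2$ is internal, a neighborhood of the portion of $\ell$ running from $q$ to $w_2$ stays inside $\interior(\conv(P))$; I would then pick $a_1$ and $a_2$ to be extremal points on the two sides of $\ell$, chosen far enough out that the wedge at $w_1$ they span, cut off by the edge $a_1a_2$, swallows $w_2$, $z$, and $m(z)$ simultaneously. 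That $w_2$ is internal rather than extremal is exactly what leaves room for a single triangle to do this, which is why the hypothesis on $\ell$ is needed; verifying that such $a_1,a_2$ always exist — in particular when $q$ lies far from $w_2$ along $\ell$, when $\ell$ is separating, or when $w_1$ is also internal — is the step I expect to be the main obstacle.

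Granting the claim, set $B=(A\setminus\{m(z)\})\cup\{z,w_1,a_1,a_2\}\subseteq P$ (or an appropriate variant, depending on which of these points already lie in $A$). Since $m(z)\in T\subseteq\conv(B)$, we get $\conv(A)\subseteq\conv(B)$, so $B$ retains every interior point of $A$ lying in $P$; moreover $z$ and $w_2$ belong to $\interior(T)\subseteq\interior(\conv(B))$ and to $P$, so they are new interior points of $B$, while the only point lost is $m(z)\notin P$. Feeding the resulting estimates — $|\interior(B)|$ exceeds $|\interior(A)|$, while $|B|$ exceeds $|A|$ only by a controlled amount — into the same chain of floor inequalities used in Lemma~\ref{lem:delta-k-move} yields $|\interior(B)|>\lfloor\frac{|B|-3+k}{2}\rfloor$, contradicting Property~$\Delta_k$ for $P$. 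As in that lemma, a short case split (here according to whether $m(z)$ is extremal or internal in $A$) will likely be needed to ensure the gain in interior points keeps pace with the growth of $|B|$; this bookkeeping is routine once the geometric claim is in hand.
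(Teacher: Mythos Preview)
Your plan is essentially the paper's: same contradiction setup, same violating set $A\subseteq m(P)$ with $m(z)\in\conv(A)$ and $z\notin\conv(A)$, same construction of $B$ by adjoining $\{z,w_1,w_2,a_1,a_2\}$ (the paper explicitly includes $w_2$ in $B$; you leave it out, but since $w_2$ lands in $\interior(B)$ either way this is cosmetic), and the same case split on whether $m(z)$ is extremal or interior in $A$. The only real gap is the geometric claim you flagged, and the paper's argument for it is simpler and more canonical than the one you were reaching for.

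Instead of pushing $a_1,a_2$ ``far enough out,'' the paper picks them intrinsically: the three rays from $w_1$ through $z$, $w_2$, and $m(z)$ all exit $\conv(P)$ through the \emph{same} boundary edge, and $a_1,a_2$ are taken to be its endpoints. To see why the rays agree, note first that the side of $R_z$ lying on $\ell$ is contained in the core $c(\ell)$, since lines of $\mathcal{A}^z$ pass through each of $w_1$ and $w_2$ and therefore cut off $R_z$ there. Now the lines $w_1a_1$ and $w_1a_2$ belong to $\mathcal{A}^z$, so the convex region $R_z$ lies entirely on one side of each; since $R_z$ touches $c(\ell)$, which sits inside the angular sector at $w_1$ bounded by $w_1a_1$ and $w_1a_2$ and containing $w_2$, the whole of $R_z$ --- and hence $z$ --- lies in that sector. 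The identical reasoning applies to $m(z)$ via $R_{m(z)}$. This argument is indifferent to whether $w_1$ is internal, whether $\ell$ is separating, and where along $c(\ell)$ the shared face of $R_z$ and $R_{m(z)}$ sits, so all three of your worries dissolve at once. With $a_1,a_2$ in hand, the floor-inequality bookkeeping goes through exactly as you outlined: two new interior points ($z$ and $w_2$) against at most four new elements when $m(z)$ is extremal in $A$, and one net new interior point against at most two new elements when $m(z)$ is interior in $A$.
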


\begin{proof}
    Let $\ell$ be a line in $\mathcal{A}^z$ adjacent to $z$ 
    with endpoints $w_1$ and $w_2$, suppose that $w_2$ is an internal point of $P$,
    and let $m\colon P\to m(P)$ be the move which takes $z$ across $\ell$. As
    above, we suppose for the sake of contradiction that $m(P)$ does not satisfy
    Property~$\Delta_k$ and can thus find a subset $A\subseteq m(P)$ with
    $|\interior(A)| > \lfloor\frac{|A|-3+k}{2}\rfloor$ such that $\overline{A}$ contains
    $m(z)$ but not $z$.

    Consider the three lines in $\mathcal{A}$ for which one endpoint is $w_1$ and the
    other belongs to the set $\{z,w_2,m(z)\}$. Since $z$, $w_2$, and $m(z)$ are internal
    points of $P$ and both $z$ and $m(z)$ are adjacent to $\ell$, all three of these lines
    must pass through the same side of the polygon $\conv(P)$. Let $a_1,a_2\in P$ be the
    extremal points which determine this side, where $a_1$ is on the same side of $\ell$
    as $z$ - see Figure~\ref{fig:internal-endpoint} for an illustration.

    Now, define $B = (A-\{m(z)\}) \cup \{z,w_1,w_2,a_1,a_2\}$. If $m(z)\in \interior(A)$, 
    then $A$ must contain $w_1$ and $w_2$, and the fact that 
    $z\notin A$ tells us that $A$ does not contain any points in the half-plane $H^+_{z,\ell}$.
    Thus, in this case we have that $|B| \leq |A| + 2$ and $|\interior(B)| \geq |\interior(A)| + 1$
    (since $w_2$ is internal for $B$ but not $A$),
    which provides the same chain of inequalities as described in the proof of Lemma~\ref{lem:delta-k-move}.
    Therefore, $P$ does not satisfy Property~$\Delta_k$, which is a contradiction.

    If $m(z)$ is instead an extremal point of $A$, then we see that $|B| \leq |A| + 4$ and
    $|\interior(B)| \geq |\interior(A)| + 2$, so we have a similar sequence of inequalities:
    \[
        |\interior(B)| \geq |\interior(A)| + 2 > \left\lfloor\frac{|A|-3+k}{2}\right\rfloor + 2
        \geq \left\lfloor\frac{|A|-3+k+4}{2}\right\rfloor \geq \left\lfloor\frac{|B|-3+k}{2}\right\rfloor.
    \]
    This also contradicts our assumption that $P$ satisfies Property~$\Delta_k$, so we
    conclude that $m(P)$ must satisfy Property~$\Delta_k$.
\end{proof}

\begin{figure}
    \centering
    \includegraphics[width=\textwidth]{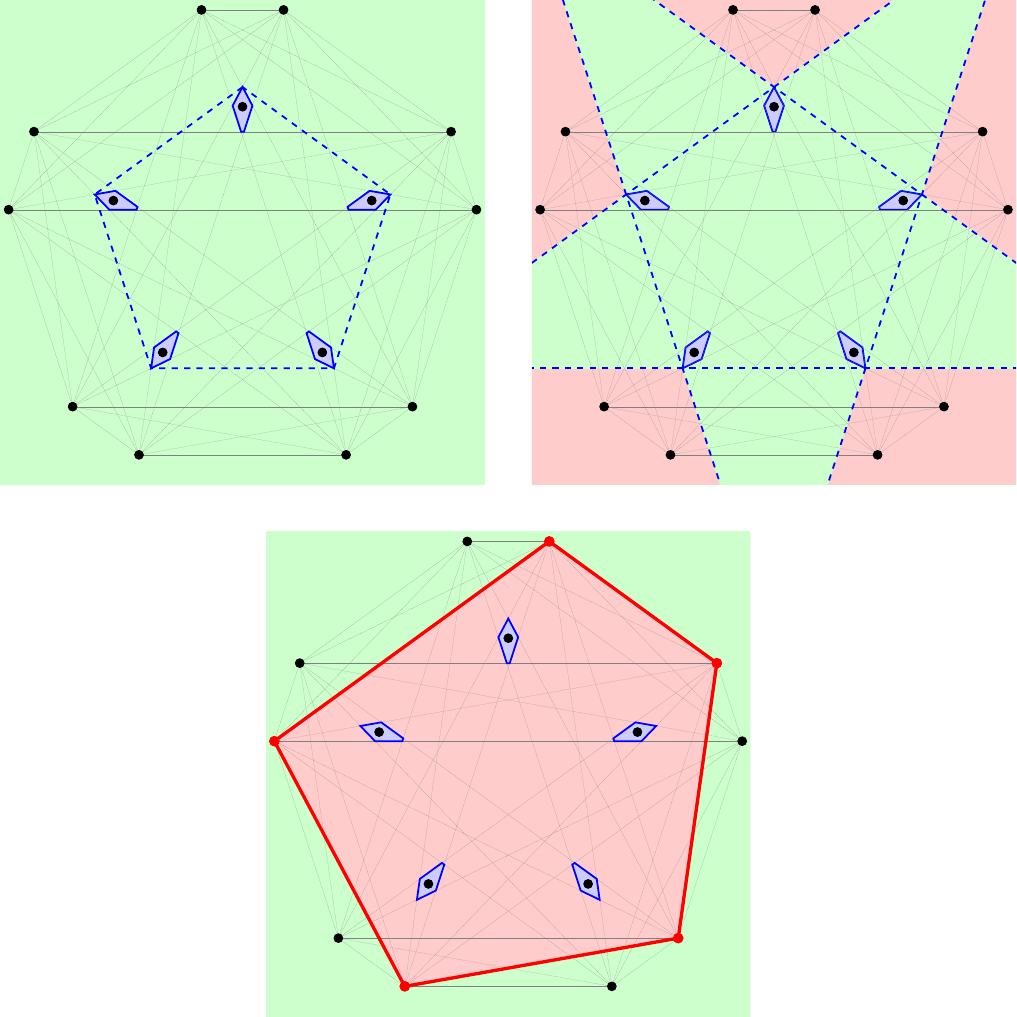}
    \caption{This configuration of 15 points has five internal points for
    which the corresponding regions have a convex hull (outlined with 
    dashed blue lines in the upper left image) where no side of the convex hull contains a 
    side of a region. By extending each side of the convex hull into a line, 
    each of the five points determines
    a cone (shaded red in the upper right image) which contains at least one extremal point. 
    Selecting one extremal point from each cone (highlighted red in the bottom image)
    yields a set of extremal points which contains the starting internal points
    and is at most as numerous, thus violating Property~$\Delta_2$.}
    \label{fig:adjacent-non-separating}
\end{figure}

Note that the following lemma supposes only that $P$ has
Property~$\Delta_2$, so in particular it holds when $P$ satisfies
Property~$\Delta_1$ as well.

\begin{lem}\label{lem:adjacent-non-separating}
    If $P$ has Property $\Delta_2$, then there is a point
    $z\in \interior(P)$ such that at least one side of the region $R_z^\text{ex}$ 
    belongs to a non-separating line in $\aext$.
\end{lem}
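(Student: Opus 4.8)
The plan is to argue by contradiction: suppose that for every internal point $z \in \interior(P)$, no side of the region $R_z^\text{ex}$ belongs to a non-separating line in $\aext$. The region $R_z^\text{ex}$ is a convex polygon cut out by lines of $\aext$, and by hypothesis every such line containing a side of $R_z^\text{ex}$ is separating, i.e. has internal points on both sides. I would first observe that the closure of the set of regions $\{R_z^\text{ex} : z \in \interior(P)\}$ — more precisely, the convex hull $K$ of all the internal points, or rather the convex hull of the union of these regions — has the property suggested by Figure~\ref{fig:adjacent-non-separating}: because each separating line passes internal points on both sides, walking outward from any internal region one never ``runs out'' of internal points until one crosses a line of $\aext$ that bounds the convex hull of the internal points but is not itself a side of any individual region. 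The geometric heart is the picture in the figure: take the convex hull $C = \conv(\interior(P))$ of the internal points; I claim no side of $C$ can contain a side of any region $R_z^\text{ex}$, for such a side would lie on a line of $\aext$ with all internal points weakly on one side, forcing it to be non-separating and contradicting our assumption (after a small argument that a side of $C$ which is also a side of a region cannot have internal points strictly on both sides).

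Next I would extract from each internal point a cone, exactly as in the figure: for each $z \in \interior(P)$, since no side of $R_z^\text{ex}$ lies on a side of $C$, extending the sides of $C$ adjacent to the region (or the supporting lines of $C$ through the relevant vertices) produces a cone $\Gamma_z$ with apex near $z$, opening away from the interior of $C$, and this cone must contain at least one extremal point of $P$ — otherwise $z$ would fail to be ``shielded,'' i.e. some side of $R_z^\text{ex}$ would actually sit on a non-separating line. The key combinatorial step is then to choose, for each of the internal points $z_1, \ldots, z_m$ (where $m = |\interior(P)|$), a representative extremal point $a_i \in \Gamma_{z_i} \cap P$, and to show these can be chosen distinct, so that we obtain $m$ distinct extremal points $a_1, \ldots, a_m$. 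Because the cones $\Gamma_{z_i}$ point outward from $C$ and are in ``convex position'' around $\partial C$, a Hall-type / cyclic-ordering argument gives an injective choice; alternatively one processes the internal points in an order compatible with their positions along $\partial C$ and picks the extremal point in each cone closest to the apex, never reusing one.

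Finally I would assemble the contradiction with Property~$\Delta_2$. Set $B = \interior(P) \cup \{a_1, \ldots, a_m\} = \{z_1, \ldots, z_m, a_1, \ldots, a_m\}$, so $|B| \le 2m$. Every $z_i$ lies in the convex hull of $C \subseteq \conv\{a_1, \ldots, a_m\}$ — here one must check that each $z_i$ lies in the interior of $\conv(B)$, which follows because the $a_i$ surround $C$ on all sides (each cone $\Gamma_{z_i}$ opens outward and together they cover all outward directions around $\partial C$, so $C \subseteq \interior(\conv\{a_1,\dots,a_m\})$) and every internal point of $P$ lies in $C$. Thus $|\interior(B)| \ge m$. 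But Property~$\Delta_2$ requires $|\interior(B)| \le \lfloor \frac{|B| - 3 + 2}{2} \rfloor = \lfloor \frac{|B| - 1}{2} \rfloor \le \lfloor \frac{2m - 1}{2} \rfloor = m - 1$, a contradiction. Hence some internal point $z$ must have a side of $R_z^\text{ex}$ on a non-separating line of $\aext$.

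The main obstacle I expect is the middle step: making the cone-to-extremal-point assignment precise and proving it can be made injective. One has to pin down exactly which supporting lines of $C$ define the cone at each $z_i$ (the region $R_z^\text{ex}$ is bounded by lines of $\aext$, and one needs that its ``outward'' sides, after extension, land on genuine edges of $C$ in a controlled way), verify each cone really catches an extremal point rather than being empty or containing only internal points, and then run the cyclic Hall argument. The rest — the inequality with Property~$\Delta_2$ and the claim that the $a_i$'s surround $C$ — is routine once the combinatorial core is in place.
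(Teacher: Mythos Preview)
Your overall strategy---assume the conclusion fails, build outward-opening cones that each catch an extremal point, and then violate Property~$\Delta_2$---is exactly the paper's. The execution, however, differs in a way that creates the very obstacle you flag at the end.

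The paper works not with $C = \conv(\interior(P))$ but with $D$, the convex hull of the \emph{regions} $R_z^{\text{ex}}$. Every vertex of $D$ is then automatically a vertex of some region, hence an intersection of two lines of $\aext$, and the outward cone there (bounded by the two sides of $D$ meeting at that vertex) is well-defined. Your version tries to attach a cone to every internal point, but for a $z$ whose region sits strictly inside the hull there are no ``sides of $C$ adjacent to the region'' to extend, and the construction stalls. Relatedly, your opening claim that ``no side of $C$ can contain a side of any $R_z^{\text{ex}}$'' is off target: sides of $C$ are segments between \emph{internal} points, whereas sides of the regions lie on lines of $\aext$ joining \emph{extremal} points, so the two can never coincide for trivial reasons and the claim does no work.

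By indexing the cones by the $k$ vertices of $D$ rather than by the $m$ internal points, the paper also sidesteps your Hall-type injectivity step entirely: it simply picks one extremal point from each of the $k$ cones, obtains a set $E$ of at most $k$ extremal points, argues that $\conv(E)\supseteq D$, and then notes that the chosen internal points $z_1,\dots,z_k$ (one per vertex of $D$) all lie in the interior of the convex set $E$ with $|E|\le k$, contradicting Property~$\Delta_2$. So the ``main obstacle'' you anticipate dissolves once you reorganize around $D$ instead of $C$; no matching argument is needed.
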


\begin{proof}
    Let $P$ be a configuration satisfying Property $\Delta_2$ and
    let $D$ be the convex hull of the regions $R_z^\text{ex}$, where $z$ is an
    interior point of $P$. Let $\ell_1,\ldots,\ell_k$ denote the lines
    (not necessarily in $\mathcal{A}$)
    which contain the $k$ sides of $D$, arranged so that they appear in
    counter-clockwise order. If at least one $\ell_i$ contains a side of
    some region $R_z^\text{ex}$, then $\ell_i$ belongs to the arrangement 
    $\aext$, and it follows that $\ell_i$ must be non-separating since all 
    internal points lie on one side of it.

    Suppose for the sake of contradiction that this is not the case.
    Then we can fix points $z_1,\ldots,z_k \in \interior(P)$ such that
    for each $i$, the region $R_{z_i}$ intersects the boundary of $D$
    at the point where the lines $\ell_i$ and $\ell_{i+1}$ (evaluated
    mod $k$) intersect. Since neither $\ell_i$ nor $\ell_{i+1}$ are
    in $\aext$, it follows that the half-planes $H^-_{z_i,\ell_i}$ and
    $H^-_{z_i,\ell_{i+1}}$ intersect in an unbounded region which contains
    an extremal point of $P$ - see Figure~\ref{fig:adjacent-non-separating} for an illustration.

    If $E$ is the set of $k$ extremal points obtained in the manner above,
    then one can show that the convex hull $\conv(E)$ contains $D$. However,
    this means that $z_1,\ldots,z_k$ lie in the interior of $E$, which
    contradicts our assumption that $P$ satisfies Property~$\Delta_2$.
    Therefore, at least one side of $D$ must belong to a non-separating
    line in $\aext$ and we are done.
\end{proof}

Putting all of these tools together, we obtain a useful connectivity property.

\begin{thm}\label{thm:delta-k-connectivity}
    Suppose $P$ satisfies Property~$\Delta_k$, where $k\in \{1,2\}$. Then there is
    a sequence of $\Delta_k$-moves which transforms $P$ into a convex configuration.
\end{thm}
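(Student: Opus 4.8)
The plan is to induct on the number of internal points of $P$. If $P$ has no internal points then $P$ is already convex and there is nothing to do, so suppose $\interior(P) \neq \emptyset$. The goal is to find a single $\Delta_k$-move which takes some internal point $z$ to a region with strictly fewer internal points on the appropriate side — or, more precisely, a move after which the total count of internal points of the configuration has dropped — and then apply the inductive hypothesis. The key engine is Lemma~\ref{lem:adjacent-non-separating}, which guarantees (under Property~$\Delta_2$, hence also under $\Delta_1$) a point $z\in\interior(P)$ one of whose $R_z^{\text{ex}}$-sides lies on a non-separating line $\ell\in\aext$.

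First I would argue that this $\ell$ is in fact a side of the smaller region $R_z$, not merely of $R_z^{\text{ex}}$: since $R_z \subseteq R_z^{\text{ex}}$ and $\ell$ supports $R_z^{\text{ex}}$ with $z$ strictly on the $H^+$ side (as $\ell$ is non-separating, all internal points lie in $H^+_{z,\ell}$), the portion of $\ell$ bounding $R_z^{\text{ex}}$ must also appear on the boundary of $R_z$ unless some line of $\mathcal{A}\setminus\aext$, i.e. a line through an internal point, cuts it off — and I would need to handle that possibility, possibly by instead moving along a chain of adjacent regions toward $\ell$, invoking Lemma~\ref{lem:delta-k-internal-endpoint} for each step that crosses a line with an internal endpoint and Lemma~\ref{lem:delta-k-move} for the final step across $\ell$. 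Either way, the outcome is a sequence of $\Delta_k$-moves pushing $z$ across $\ell$ into the half-plane $H^-_{z,\ell}$.

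Next I would check that this sequence strictly decreases the number of internal points. After crossing the non-separating line $\ell\in\aext$, the point $z$ lies outside $c(\ell)$'s side — but more to the point, $z$ is now on the far side of a line between two extremal points, with no other internal points there, so $z$ has become extremal in $m(P)$ (this is essentially the ``$m(z)$ is the unique internal point in $H^+$'' observation reused from the proof of Lemma~\ref{lem:delta-k-move}, read in reverse). The intermediate moves of Lemma~\ref{lem:delta-k-internal-endpoint} do not create new internal points — one would need to verify they don't turn an extremal point internal, which follows because those moves only relocate $z$ within regions still on the ``interior side'' until the last step. Hence $|\interior(m(P))| < |\interior(P)|$, the inductive hypothesis supplies a sequence of $\Delta_k$-moves taking $m(P)$ to a convex configuration, and concatenating gives the desired sequence for $P$.

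The main obstacle I expect is the bookkeeping in the induction's descent step: ensuring that the move (or chain of moves) genuinely lowers the internal-point count rather than merely relocating an internal point, and confirming that each intermediate move stays a legitimate $\Delta_k$-move. Lemma~\ref{lem:adjacent-non-separating} only promises a side of $R_z^{\text{ex}}$ on a non-separating $\aext$-line; bridging the gap between $R_z^{\text{ex}}$ and $R_z$ — i.e. showing one can actually walk $z$ out to that line through a sequence of regions each reachable by a $\Delta_k$-move covered by Lemma~\ref{lem:delta-k-move} or Lemma~\ref{lem:delta-k-internal-endpoint} — is the technical heart of the argument and is where I would spend the most care.
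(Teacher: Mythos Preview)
Your overall plan mirrors the paper's---induct on $|\interior(P)|$, use Lemma~\ref{lem:adjacent-non-separating} to find an internal $z$ whose region $R_z^{\text{ex}}$ borders a non-separating line $\ell\in\aext$, walk $z$ to $\ell$ via Lemma~\ref{lem:delta-k-internal-endpoint}, and cross via Lemma~\ref{lem:delta-k-move}---and you correctly flag the $R_z$ versus $R_z^{\text{ex}}$ gap as the delicate part. But there is a genuine error in your descent: you assert that after crossing $\ell$ the point $z$ ``has become extremal in $m(P)$.'' A non-separating line in $\aext$ need not be a \emph{boundary} line of $\conv(P)$. If $\ell$ is a diagonal, the half-plane $H^-_{z,\ell}$ still contains extremal vertices of $P$, and $m(z)$ can land strictly inside their convex hull---remaining internal. (Take the vertices of a regular hexagon together with one point near the center: every diagonal is non-separating, yet crossing a short diagonal merely moves the central point into an adjacent triangular region still interior to the hexagon.) The passage you cite from the proof of Lemma~\ref{lem:delta-k-move} says only that $m(z)$ is the \emph{unique internal} point in its half-plane, not that it is extremal.

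The paper handles this by nesting a second induction inside the first: holding $|\interior(P)|$ fixed, it inducts on the number $|\text{nsnb}(P)|$ of non-separating non-boundary lines in $\aext$. In the base case of that inner induction, any non-separating line produced by Lemma~\ref{lem:adjacent-non-separating} is forced to be a boundary line, and then crossing it genuinely does drop $|\interior(P)|$. Away from that base case, crossing $\ell$ is argued to reduce the inner invariant rather than $|\interior(P)|$ directly. Your single induction on $|\interior(P)|$ cannot close without some such secondary measure of progress.
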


\begin{proof}
    We proceed by induction on the number of internal points for $P$.
    If $P$ has no internal points, then $P$ is already convex and we are done.
    Now, suppose the theorem holds for configurations with fewer internal points 
    than $P$ and define $\text{nsnb}(P)$ to be the set of non-separating non-boundary
    lines in $\aext$; we will prove that the theorem holds for $P$ as well by 
    a second induction on $|\text{nsnb}(P)|$. If $|\text{nsnb}(P)| = 0$, then each
    internal point of $P$ is adjacent to a boundary line, and by 
    Lemma~\ref{lem:delta-k-move}, we can bring one of these internal points
    across a boundary line by a $\Delta_k$-move, which reduces the number of internal
    points by one and allows us to apply the first inductive hypothesis.

    Next, suppose the claim holds for all configurations $Q$ with $|\interior(Q)| = |\interior(P)|$
    and $|\text{nsnb}(Q)| < |\text{nsnb}(P)|$. By Lemma~\ref{lem:adjacent-non-separating},
    we know that there is an internal point
    $z$ in $P$ such that one side of the region $R_z^\text{ex}$ is contained in
    a non-separating line in $\aext$. This means that while $z$ may not
    be adjacent to a non-separating line, there is a finite sequence of moves
    across lines with at least one internal endpoint which takes $z$ to a region
    which is adjacent to a non-separating line in $\aext$.
    Each move across lines with an internal endpoint preserves Property~$\Delta_k$ by
    Lemma~\ref{lem:delta-k-internal-endpoint}, after which we can perform a $\Delta_k$-move across the 
    non-separating line by Lemma~\ref{lem:delta-k-move}. This new configuration has 
    fewer non-separating non-boundary lines, so by the second inductive hypothesis, 
    it can be further transformed via $\Delta_k$-moves into
    a convex configuration and the proof is complete.
\end{proof}

From a topological perspective, Theorem~\ref{thm:delta-k-connectivity} can be 
interpreted as a statement about the configuration space of $n$ points in 
the plane. To do so, recall that the 
\emph{weak Property~$\Delta_k$} is a slight weakening of Property~$\Delta_k$
which allows for one instance of three collinear points. 

\begin{cor}[Theorem~\ref{mainthm:connectivity}]
    \label{cor:topological-interpretation}
    Let $k\in \{1,2\}$. The set of all configurations which satisfy the
    weak Property~$\Delta_k$ forms a connected subspace of the configuration 
    space of $n$ points in $\C$.
\end{cor}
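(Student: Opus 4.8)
The plan is to translate the discrete statement of Theorem~\ref{thm:delta-k-connectivity} into a path-connectivity statement about the relevant subspace of configuration space. Let $X_k$ denote the set of configurations of $n$ points in $\C$ satisfying the weak Property~$\Delta_k$, viewed as a subspace of the unordered configuration space. The key observation is that $X_k$ decomposes according to the induced line arrangement: configurations in general position with the same combinatorial type of line arrangement (i.e. the same oriented matroid data) form an open cell, and these cells are glued along walls consisting of configurations with exactly one collinear triple. A single $\Delta_k$-move, as defined in Definition~\ref{def:move}, corresponds to a path in configuration space: we first move $z$ within its region $R_z$ to a wall (crossing a single line $\ell$, so passing through exactly one collinear triple), then into the adjacent region. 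Since both the source and target configurations satisfy Property~$\Delta_k$ and the intermediate configuration has only one collinear triple, the whole path stays in $X_k$ — here I would want a small lemma that the weak property is preserved along such an elementary crossing, which follows because the convexity counts $|\interior(A)|$ change by at most the obvious amount and the endpoint configurations already satisfy the strict bound.

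With that dictionary in place, the proof runs as follows. First I would fix a ``standard'' convex configuration $P_0$ of $n$ points (say the vertices of a regular $n$-gon); any two convex configurations in general position are connected in $X_k$ by a straightforward straight-line homotopy through convex configurations, so it suffices to show every configuration in $X_k$ can be joined to $P_0$. Second, given an arbitrary $P \in X_k$: if $P$ is in general position it satisfies Property~$\Delta_k$ outright, and Theorem~\ref{thm:delta-k-connectivity} gives a finite sequence of $\Delta_k$-moves taking $P$ to a convex configuration; concatenating the associated paths yields a path in $X_k$ from $P$ to a convex configuration and hence to $P_0$. Third, if $P$ has its one allowed collinear triple, then an arbitrarily small generic perturbation $P'$ of $P$ lies in general position and still satisfies Property~$\Delta_k$ (the convexity bounds are open conditions away from the collinearity, and resolving the triple only decreases interior counts), so $P'\in X_k$, $P$ is joined to $P'$ by a short path inside $X_k$, and we reduce to the previous case.

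The main obstacle I expect is the bookkeeping that each elementary step — both the perturbation in the collinear case and each individual $\Delta_k$-move — actually traces a path that never leaves $X_k$, i.e. that no intermediate configuration along the path violates the weak property. The subtle point is that while a $\Delta_k$-move is defined purely combinatorially (it records that the endpoint regions both satisfy Property~$\Delta_k$), realizing it as a continuous path requires checking that sliding $z$ from $R_z$ across a single line into the neighboring region does not transiently create a forbidden configuration; this is where the restriction $k\in\{1,2\}$ and the ``at most one collinear triple'' clause do real work, since crossing a single line of $\mathcal{A}$ introduces exactly one collinear triple and the interior-point counts are monotone across that wall. Once this local realization lemma is established, the global argument is just concatenation of paths and an appeal to Theorem~\ref{thm:delta-k-connectivity}, so the remainder is routine.
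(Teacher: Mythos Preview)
The paper gives no proof of this corollary at all; it is simply recorded immediately after Theorem~\ref{thm:delta-k-connectivity} as its topological reinterpretation. Your proposal is precisely the natural way to fill in that gap and is clearly what the authors have in mind: realize each $\Delta_k$-move as a short path that crosses a single wall of the arrangement (hence passes through exactly one weak-$\Delta_k$ configuration), observe that convex configurations in general position form a path-connected subset, and handle a configuration already sitting on a wall by first nudging it into an adjacent open chamber.

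One small correction is worth flagging. In your third step you assert that ``resolving the triple only decreases interior counts,'' but this is not true for an arbitrary direction of perturbation: if $a,b,c$ is the collinear triple with $b$ between $a$ and $c$, then pushing $b$ to one side of the line through $a$ and $c$ can move $b$ from the boundary into the interior of a triangle $\conv(\{a,c,d\})$, possibly raising that triangle's interior count past the $\Delta_k$ bound. What you actually need is that at least \emph{one} of the two adjacent chambers satisfies Property~$\Delta_k$. This is true, and the argument is short: the only convex subsets whose interior count can jump across the wall are those having the segment $ac$ on their boundary, and if perturbing $b$ to each side produced a violating subset, one checks (by taking the union of the two witnesses together with $b$) that the wall configuration itself already violates the weak-$\Delta_k$ bound, contrary to hypothesis. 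With that refinement your argument is complete.
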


Applying a move to a configuration $P$ will certainly affect the 
noncrossing partition lattice $\NC(P)$, and possibly even its 
isomorphism type. To close this section, we introduce
a natural map between the larger partition lattices.

\begin{figure}
    \centering
    \includegraphics[width=0.9\textwidth]{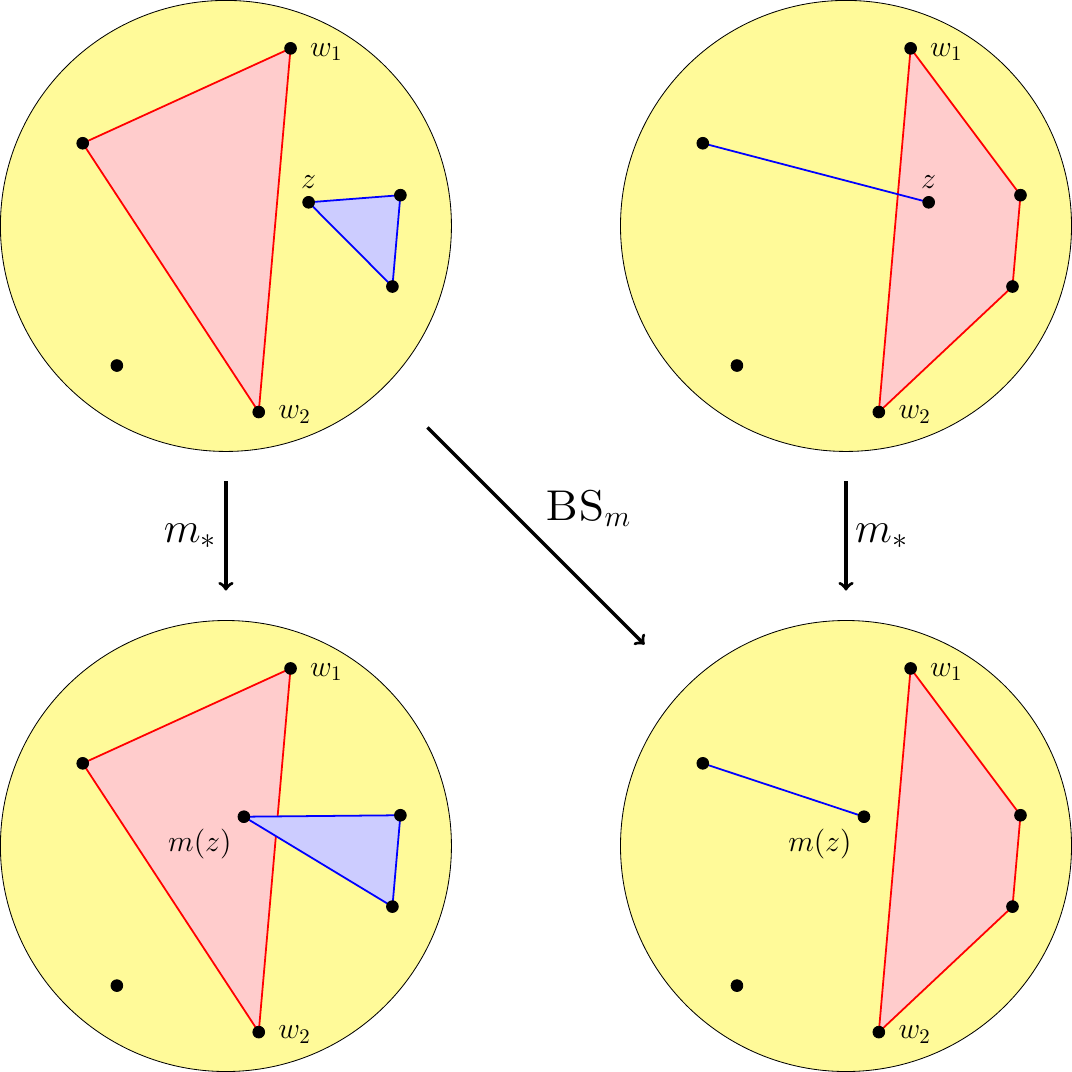}
    \caption{The block-switching map $\BS_m$, compared to the 
    induced map $m_*$ for a fixed move $m$. In this example, $\BS_m$
    takes an element of $\NC(P)$ to an element of $\NC(m(P))$.}
    \label{fig:block-switching}
\end{figure}

\begin{defn}\label{def:block-switching}
    Let $m\colon P \to m(P)$ be a move which brings the point $z\in P$
    across the line $\ell$ in $\mathcal{A}^z$ and let $w_1$ and $w_2$ 
    be the two points of $P$ on $\ell$. 
    The \emph{block-switching map} 
    $\BS_m\colon \Pi(P) \to \Pi(m(P))$ is defined for each $\pi \in \Pi(P)$ 
    as follows: if $w_1$ and $w_2$ share a block in $\pi$ and $z$ belongs
    to a different block, then define 
    $\BS_m(\pi)$ to be the result of removing $\{w_1,w_2\}$
    and $\{m(z)\}$ from their respective blocks in $m_*(\pi)$ and swapping them;
    otherwise, define $\BS_m(\pi) = m_*(\pi)$. See Figure~\ref{fig:block-switching}
    for an illustration.
    Note that $\BS_m$ is a rank-preserving bijection, but not an isomorphism.
\end{defn}

\section{Skewers}
\label{sec:skewers}

In this section, we introduce the notion of ``skewering'' in the line arrangement
$\mathcal{A}$ and prove some technical results which come from Property~$\Delta_2$. 
These tools are used in this section to prove that $\NC(P)$ is graded when $P$ 
satisfies Property~$\Delta_2$ (Proposition~\ref{prop:graded}) and in Section~\ref{sec:delta-2} to prove
Theorem~\ref{mainthm:rank-symmetry}.
Throughout the rest of this section,
let $P$ be a configuration of $n$ points in $\C$ and let $\mathcal{A}$ be the
corresponding line arrangement.

\begin{defn}
    For each pair of distinct lines 
    $\ell_1,\ell_2 \in \mathcal{A}$, the intersection $\ell_1\cap\ell_2$ can 
    be classified into one of four different types (without loss of generality):
    \begin{itemize}
        \item if $\ell_1\cap\ell_2 = \emptyset$, then $\ell_1$ and $\ell_2$
        are \emph{parallel};
        \item if $\ell_1\cap\ell_2$ lies in $c(\ell_1)\cap c(\ell_2)$,
        then $\ell_1$ and $\ell_2$ \emph{intersect internally};
        \item if $\ell_1\cap\ell_2$ lies in neither $c(\ell_1)$ nor
        $c(\ell_2)$, then $\ell_1$ and $\ell_2$ \emph{intersect externally};
        \item if $\ell_1\cap\ell_2$ lies in $c(\ell_2)$ but not $c(\ell_1)$, 
        then $\ell_1$ \emph{skewers} $\ell_2$.
    \end{itemize}
    As a useful shorthand, we write $\ell_1\dashv \ell_2$ to mean that $\ell_1$ skewers $\ell_2$.
    When this is the case, note that the convex hull $c(\ell_1,\ell_2)$ contains
    one of the endpoints of $\ell_1$ as an internal point - we refer to this as the
    \emph{link vertex} for the skewer. Finally, a \emph{skewering sequence} is a collection of
    lines $\ell_1,\ldots,\ell_k$ in $\mathcal{A}$ with $\ell_1 \dashv \ell_2 \dashv \cdots \dashv \ell_k$.
\end{defn}

\begin{figure}
    \centering
    \includegraphics[width=0.5\textwidth]{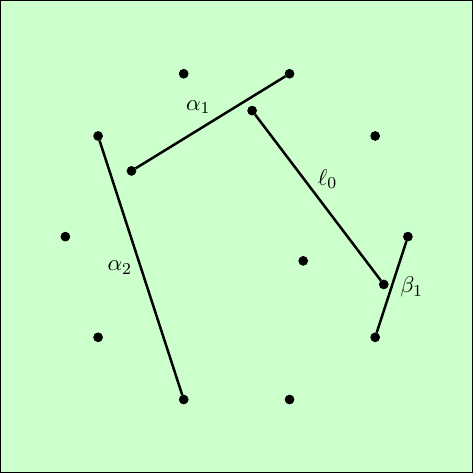}
    \caption{A skewering tree for a configuration of $14$ points which
    satisfies Property~$\Delta_2$, but not Property~$\Delta_1$. The tree
    consists of two skewering sequences: $\ell_0 \dashv \alpha_1 \dashv \alpha_2$
    and $\ell_0 \dashv \beta_1$. For the sake of visual clarity, only the
    cores of the four lines have been drawn.}
    \label{fig:skewering-tree}
\end{figure}

\begin{defn}
    Suppose that $P$ satisfies Property~$\Delta_2$. A \emph{skewering tree}
    is a subset $T \subset \mathcal{A}$, together with two additional pieces
    of data - a special element $\ell_0 \in T$ called the \emph{initial line} 
    and a chosen closed half-plane bounded by $\ell_0$, which we call the 
    \emph{positive side} of $\ell_0$ - with the following properties:
    \begin{itemize}
        \item no two elements of $T$ intersect internally; 
        \item for all $\ell \in T$ with $\ell \neq \ell_0$, there
        is a unique line $\ell' \in T$ which skewers $\ell$;
        \item no element of $P$ is the link vertex for more than one
        skewer.
    \end{itemize}
    We can also build a skewering tree inductively as follows: begin 
    with an initial line $\ell_0\in \mathcal{A}$, select one of the two half-planes
    bounded by $\ell_0$ to be the positive side, and define $T = \{\ell_0\}$.
    Next, either stop here or add a line from $\mathcal{A}$ to $T$ which is 
    skewered by another element of $T$ such that the requirements above remain 
    satisfied. Repeat this process and stop at any point; the resulting set 
    $T$ is a skewering tree. See Figure~\ref{fig:skewering-tree} for an
    example. In either construction, we refer to the non-initial 
    lines in $T$ which do not skewer any other lines as \emph{leaves}. Finally,
    we say that a skewering tree is \emph{maximal} if it is not properly contained in
    any other skewering tree in $\mathcal{A}$.
\end{defn}

\begin{figure}
    \centering
    \includegraphics[width=0.5\textwidth]{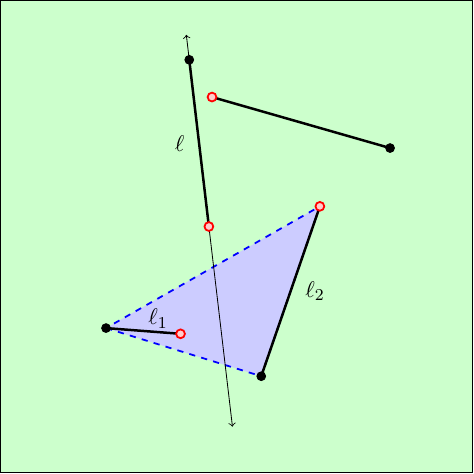}
    \caption{If $\ell$ intersects the convex hull of $\ell_1$ and $\ell_2$
    (denoted in blue), then the set of all endpoints has half of its elements in the
    interior (shown here as unfilled red dots).}
    \label{fig:no-skewer-cycles}
\end{figure}

The following lemma places fairly strong restrictions on the planar structure
of skewering trees in configurations which satisfy Property~$\Delta_2$.

\begin{lem}\label{lem:no-skewer-overlap}
    Suppose $P$ satisfies Property~$\Delta_2$, let $T \subset \mathcal{A}$ 
    be a skewering tree, and let $\ell$, $\ell_1$, and $\ell_2$ be distinct
    lines in $T$ such that $\ell$ is a leaf and $\ell_1 \dashv \ell_2$. 
    Then $\ell$ does not intersect the convex hull $c(\ell_1,\ell_2)$.
\end{lem}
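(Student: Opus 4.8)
The plan is to argue by contradiction: assume that the leaf $\ell$ does intersect $c(\ell_1,\ell_2)$, and exhibit a subset $B\subseteq P$ whose interior contains more than $\lfloor (|B|-3+2)/2\rfloor = \lfloor(|B|-1)/2\rfloor$ points, contradicting Property~$\Delta_2$. The natural candidate is $B = V(\ell,\ell_1,\ell_2)$, the six endpoints of the three lines (fewer if some endpoints coincide, e.g. at a link vertex). Since $\ell_1\dashv\ell_2$, the convex hull $c(\ell_1,\ell_2)$ already has the link vertex of that skewer as an interior point. The goal is to show that once $\ell$ crosses into $c(\ell_1,\ell_2)$, at least one endpoint of $\ell$ is also forced into the interior of $\conv(B)$, giving two interior points among at most six, hence violating the bound for $|B|=6$ (which allows at most $\lfloor 5/2 \rfloor = 2$ — so I actually need to be more careful and produce \emph{three} interior points, or reduce $|B|$).

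First I would set up the geometry carefully. Write $\ell_1 = \overline{a b}$ and $\ell_2 = \overline{c d}$ with the skewer $\ell_1\dashv\ell_2$ meaning $\ell_1\cap\ell_2 \in c(\ell_2)\setminus c(\ell_1)$; say $b$ is the link vertex, so $b\in\interior(c(\ell_1,\ell_2)) = \interior(\conv\{a,c,d\})$ and the quadrilateral structure is $a,c,d$ extremal with $b$ inside triangle $acd$. Now $\ell = \overline{ef}$ is a leaf of $T$, so it does not intersect any other line of $T$ internally (the first skewering-tree axiom); in particular $\ell$ does not intersect $c(\ell_1)$ or $c(\ell_2)$ in their interiors. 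The hypothesis for contradiction is that the \emph{line} $\ell$ meets the region $c(\ell_1,\ell_2)$. Since $\ell$ avoids $c(\ell_1)\cup c(\ell_2)$ except possibly at endpoints, $\ell$ must pass through the part of $c(\ell_1,\ell_2)$ not covered by those two cores — and crucially, being a line, it must \emph{exit} $c(\ell_1,\ell_2)$ on both sides, so it separates some vertices of $\{a,b,c,d\}$ from others within the hull. I would do a short case analysis on which vertices of $c(\ell_1,\ell_2)$ lie on each side of $\ell$; in each case, because $\ell$ cannot cut the segments $ab$ or $cd$ internally, the endpoints $e,f$ of $\ell$ are pinned down relative to the quadrilateral, and one checks that the hull $\conv\{a,b,c,d,e,f\}$ equals $\conv\{a,c,d,e,f\}$ (i.e. $b$ stays interior) \emph{and} that at least one more of the six points — either $e$, $f$, or an endpoint of $\ell_1$ or $\ell_2$ that got "swallowed" — lands in the interior.

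The main obstacle is precisely the counting margin: with $|B|=6$ Property~$\Delta_2$ tolerates $2$ interior points, so I need to either (i) find a configuration with $3$ interior points among $6$, or (ii) pass to a smaller witnessing set where fewer interior points suffice, or (iii) enlarge $B$ in a controlled way — e.g. adjoin the link vertices or a bounding pair of extremal points (in the spirit of the figure caption, "half of its elements in the interior"). The phrasing in Figure~\ref{fig:no-skewer-cycles}, "the set of all endpoints has half of its elements in the interior," strongly suggests the intended witness has an even size $2m$ with exactly $m$ interior points, which violates $\Delta_2$ since $\lfloor (2m-1)/2\rfloor = m-1 < m$; so the real content is a lemma-of-the-figure: once $\ell$ skewers into $c(\ell_1,\ell_2)$, the six (or suitably many) endpoints split evenly between the convex hull boundary and its interior. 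I would therefore structure the proof around establishing that even split directly: show that exactly one endpoint of $\ell$ together with the link vertex $b$ (and, if $\ell$ also interacts with $\ell_1$'s far structure, one more) are interior, while the complementary points — $a,c,d$ and the other endpoint of $\ell$ — form the convex hull, then invoke $\lfloor(|B|-1)/2\rfloor$ to close the contradiction. A clean way to avoid fragile case-chasing is to use the skewering-tree axioms to rule out $\ell$ intersecting $c(\ell_1)$ or $c(\ell_2)$ at all except possibly sharing an endpoint, which collapses the analysis to: "$\ell$ is a chord of $c(\ell_1,\ell_2)$ whose relative interior avoids both cores," and planar convexity then forces the even endpoint split.
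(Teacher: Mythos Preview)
Your proposal has a real gap. Working only with the six endpoints $B = V(\ell,\ell_1,\ell_2)$, Property~$\Delta_2$ tolerates $\lfloor 5/2\rfloor = 2$ interior points, and that is exactly what you can produce: the link vertex of $\ell_1\dashv\ell_2$, plus (under the crossing hypothesis) one endpoint of $\ell$. Your proposed ``even split'' giving three interior points among the six does not follow from the skewering-tree axioms restricted to these three lines alone; nothing you have assumed about $\ell$ beyond ``leaf'' and ``no internal intersections'' pins down its endpoints tightly enough. In particular, there is no local obstruction to both endpoints of $\ell$ being extremal in $\conv(B)$, and your case analysis on which side of $\ell$ the vertices $a,c,d$ fall does not manufacture a third interior point.

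The paper closes this margin not by squeezing more out of the six points but by enlarging $B$ along the tree. It takes the full skewering sequence from the root $\ell_0$ down to the leaf $\ell$ (and, if $\ell_1,\ell_2$ lie on a different branch, the sequence to them as well). Each skewer along the way contributes its link vertex as an interior point of the total endpoint set, so a chain of length $k$ already gives $k$ interior points among $2(k+1)$. The crossing of $\ell$ into $c(\ell_1,\ell_2)$ then adds exactly one more interior endpoint, yielding $k+1$ out of $2(k+1)$ --- precisely the ``half'' of the figure caption --- which violates $\Delta_2$. Your option~(iii), adjoining link vertices, is the correct instinct; the point is that you must adjoin \emph{all} of them along the path(s) in $T$, not just the one from $\ell_1\dashv\ell_2$.
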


\begin{proof}
    First, suppose that $\ell$ is the last element in a skewering sequence
    which includes $\ell_1$ and $\ell_2$, i.e. that there is a skewering 
    sequence $\ell_0 \dashv \alpha_1 \dashv \cdots \alpha_k$ such that
    $\ell = \alpha_k$, and $\alpha_i = \ell_1$ and $\alpha_{i+1} = \ell_2$
    for some $i$. The set $V(\{\ell_0,\alpha_1,\ldots,\alpha_k\})$ then
    consists of $2(k+1)$ points, of which at least $k$ are internal: one
    for each non-leaf element in the skewering sequence. 
    If $\ell$ were to intersect the convex hull $c(\ell_1,\ell_2)$, then
    one of the endpoints of $\ell$ would lie in the interior of the convex
    hull $c(\ell_1,\ell_2,\ell)$, but that would mean that the $2(k+1)$-element
    set described above has $k+1$ internal points, which contradicts our
    assumption that $P$ satisfies Property~$\Delta_2$.

    On the other hand, suppose that $\ell$ is in a different skewering
    sequence than $\ell_1$ and $\ell_2$. That is, suppose that $T$ contains
    the sequences $\ell_0 \dashv \alpha_1 \dashv \cdots \dashv \alpha_k$
    and $\ell_0 \dashv \beta_1 \dashv \cdots \dashv \beta_m$, where
    $\alpha_k = \ell$, and $\beta_i = \ell_1$ and $\beta_{i+1} = \ell_2$ 
    for some $i$. Then 
    $V(\{\ell_0,\alpha_1,\ldots,\alpha_k,\beta_1,\ldots,\beta_m\})$
    is a set of $2(k+m+1)$ points in $P$, of which at least $k+m$ are internal.
    Once again, if $\ell$ intersected the convex hull $c(\ell_1,\ell_2)$,
    this would imply that our set of $2(k+m+1)$ points has $k+m+1$ internal
    points, which again would violate Property~$\Delta_2$. In both cases, 
    we see that $\ell$ does not intersect $c(\ell_1,\ell_2)$.
\end{proof}

Note that one may ``prune'' a skewering tree by iteratively removing leaves, 
and the result remains a skewering tree at each step. Thus, Lemma~\ref{lem:no-skewer-overlap}
further shows that the non-leaf elements of a skewering tree are similarly constrained.

As mentioned in Section~\ref{sec:noncrossing-partitions}, the full partition lattice 
$\Pi(P)$ is graded by the rank function $\rho(\pi) = n-bl(\pi)$ for any choice of 
configuration $P$, but it is possible to construct $P$ such that $\NC(P)$ is
not graded. For example, Figure~\ref{fig:ungraded} depicts
a noncrossing partition with three blocks which is covered by the
maximum element $\hat{1}$ in $\NC(P)$, but not in $\Pi(P)$. In the following proposition, 
we use Lemma~\ref{lem:no-skewer-overlap} to show that configurations with Property~$\Delta_2$
avoid this sort of behavior.

\begin{figure}
    \centering
    \includegraphics[width=0.5\textwidth]{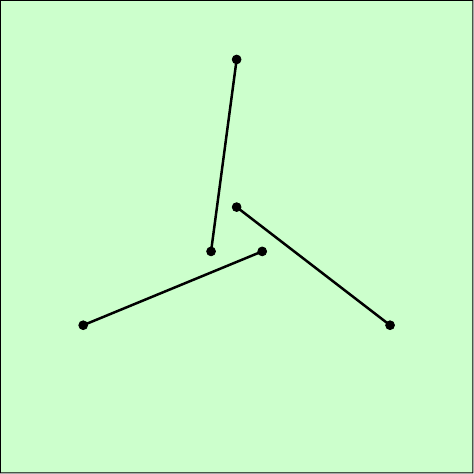}
    \caption{No two blocks of this noncrossing partition can be combined to make
    a coarser noncrossing partition without also including the third, so this
    is a partition with three blocks which is covered by the maximum element
    $\hat{1}$, which has one block.}
    \label{fig:ungraded}
\end{figure}

\begin{prop}\label{prop:graded}
	If $P$ satisfies Property~$\Delta_2$, then $\NC(P)$ is graded.
\end{prop}

\begin{proof}
	Suppose $P$ has Property~$\Delta_2$ and let $\pi < \pi'$ in $\NC(P)$
	be a covering relation, i.e. there is no $\pi'' \in \NC(P)$ with 
	$\pi < \pi'' < \pi'$. Then $\pi'$ is obtained from $\pi$ in the
	following manner: there is a collection $\mathcal{B}$ of $k \geq 2$ blocks in $\pi$
	which are removed and replaced by the union $U = \cup \mathcal{B}$ to
	create $\pi'$. If we can show that $k = 2$, then we have that 
	$\rho(\pi) + 1 = \rho(\pi')$, and therefore $\rho$ is a rank function for $\NC(P)$. 
	
	To this end, let $\mathcal{A}_{\pi}$ denote the lines in $\mathcal{A}$ which contain a side of 
	$\conv(A)$ for some block $A \in \pi$ and let $A_1 \in \mathcal{B}$. 
	Since $k\geq 2$, there is a side of $\conv(A_1)$ which does not lie in the 
	boundary of $\conv(U)$; let $\ell_1 \in \mathcal{A}_\pi$ be the line which 
	contains this side. If $\ell_1$ does not skewer any other line in $\mathcal{A}_\pi$,
	then each block in $\pi$ lies on one side of $\ell_1$ or the other,
	and therefore one may define a new partition $\pi''$ by replacing $\mathcal{B}$ with
	two blocks, each of which is the union of all blocks in $\mathcal{B}$ with
	interior on a particular side of $\ell_1$. In this case, we would have
	$\pi \leq \pi'' < \pi'$, which implies $\pi = \pi''$ by our initial assumptions,
	thus $k=2$ and we are done.
	
	Suppose $\ell_1$ does skewer another line $\ell_2 \in \mathcal{A}_\pi$. 
	Without loss of generality, we may choose $\ell_2$ to be one of the 
	``closest'' to $\ell_1$ in the sense 
	that the segment of $\ell_1$ between the intersection $\ell_1 \cap \ell_2$ and the core 
	$c(\ell_1)$ does not intersect any elements of $\mathcal{A}_\pi$. Then $\ell_2$ does
	not contain a side of $\conv(U)$, so we may repeat the analysis in the previous
	paragraph: if $\ell_2$ does not skewer any other line in $\mathcal{A}_\pi$, then 
	we are done. Otherwise, we can continue this process to obtain a skewering sequence
	$\ell_1 \dashv \ell_2 \dashv \cdots \dashv \ell_m$ such that each $\ell_i$ is an element 
	of $\mathcal{A}_\pi$ which does not contain a side of $\conv(U)$, and by 
	Lemma~\ref{lem:no-skewer-overlap}, we can guarantee that this sequence terminates in
	a line $\ell_m$ which does not skewer any line in $\mathcal{A}_\pi$. Applying the argument
	above to $\ell_m$, the proof is complete.
\end{proof}

Next, we investigate how skewering trees decompose configurations into
regions.

\begin{defn}
    The union of all lines in a skewering tree $T$ forms a subset of the plane with the
    structure of an unbounded graph, consisting of vertices, line segments,
    rays, and lines (one may equivalently view this as a graph embedding on the
    2-dimensional sphere, viewed as the stereographic projection of the plane);
    let $\Gamma_T$ denote the unbounded graph obtained from this by
    removing any ray which does not contain the core of its corresponding
    line in $T$. We refer to $\Gamma_T$ as the \emph{planar realization} of $T$,
    observing that $\Gamma_T$ is an acyclic graph, i.e. a tree. 
    Note that while a pair of rays in $\Gamma_T$ might overlap, 
    Lemma~\ref{lem:no-skewer-overlap} implies that no ray in $\Gamma_T$ has a
    transverse intersection with a line segment.
\end{defn}

\begin{figure}
    \centering
    \includegraphics[width=\textwidth]{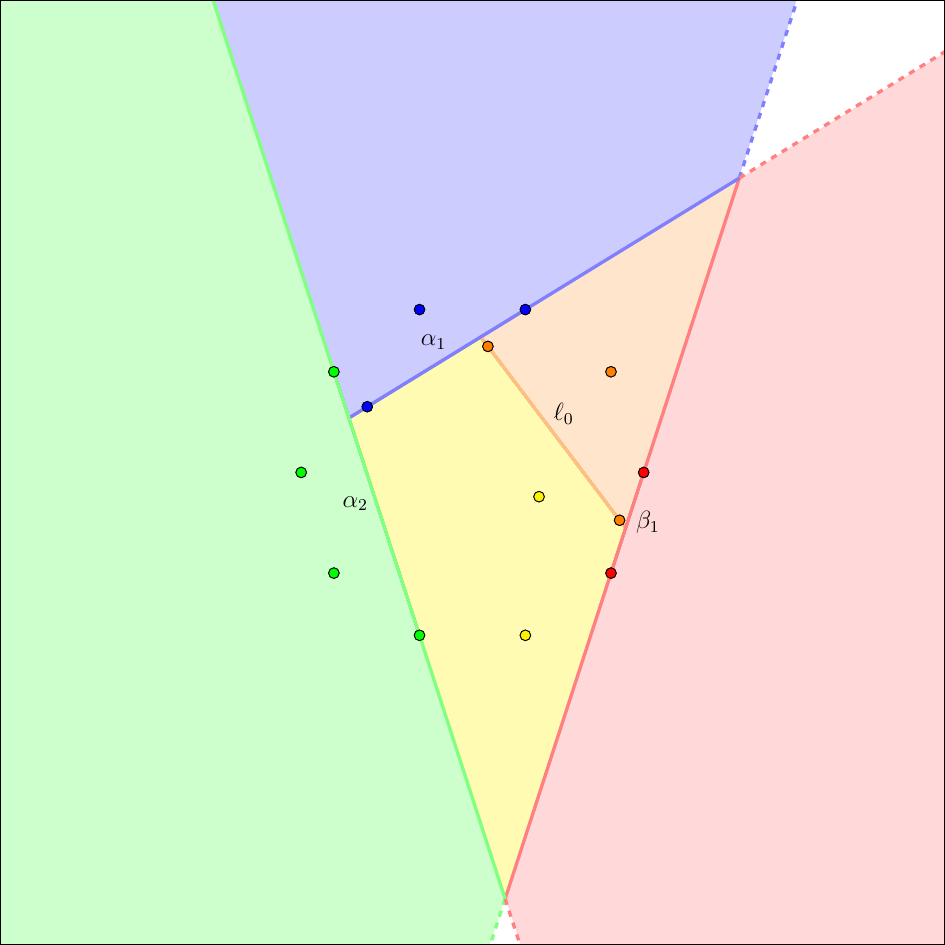}
    \caption{Cells associated to the skewering tree in 
    Figure~\ref{fig:skewering-tree}. In this case, the positive side
    of the initial line $\ell_0$ is chosen to be the upper-right side; the
    corresponding cell is shaded orange.}
    \label{fig:cells}
\end{figure}

\begin{defn}
    For each non-initial line $\ell$ in $T$, the core $c(\ell)$
    is contained in the boundary for exactly one region of the complement
    $\C - \Gamma_T$ (since the other side of $\ell$ contains the core of the line
    which skewers it). The \emph{cell} associated to $\ell$, denoted $C_\ell$, is 
    the union of the interior of this region together with the line segment or
    ray which contains $c(\ell)$. Note that $C_\ell$ is a convex (possibly unbounded)
    subset of the plane which contains exactly one side of its boundary.
    The initial line $\ell_0$ bounds two regions and thus corresponds to two
    cells: $C^+_{\ell_0}$, which contains the core $c(\ell_0)$ and belongs to
    the positive side of $\ell_0$, and $C^-_{\ell_0}$, which does not. 
    See Figure~\ref{fig:cells} for an illustration.
    For any cell $C^{\pm}_\ell$, we write $V(C^{\pm}_\ell)$ to mean the intersection 
    of $P$ with $C^{\pm}_\ell$.
\end{defn}

\begin{lem}\label{lem:disjoint-cells}
    The cells of a skewering tree are pairwise disjoint.
\end{lem}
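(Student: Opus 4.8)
The plan is to argue by contradiction: suppose two distinct cells $C$ and $C'$ of the skewering tree $T$ have nonempty intersection, and derive a configuration that violates Property~$\Delta_2$ (or contradicts the structural lemmas already established). The first observation is that each cell $C_\ell$ lies entirely on one side of the line containing $c(\ell)$ — namely, the side \emph{not} containing the line that skewers $\ell$ — and that $C_\ell$ is an intersection of half-planes cut out by the rays and segments of $\Gamma_T$ that bound its region. So a nonempty intersection $C_\ell \cap C_{\ell'}$ would be a convex region meeting the cores (or the bounding segments/rays) of two different lines of $T$. Since $\Gamma_T$ is a tree (as noted just before the statement), the regions of $\C - \Gamma_T$ are honest distinct faces; two faces can only share boundary, not interior, so the only way $C_\ell$ and $C_{\ell'}$ can overlap is if one of the bounding segments or rays of one cell passes through the interior of the other — and by Lemma~\ref{lem:no-skewer-overlap} (together with the remark on pruning that follows it), no ray of $\Gamma_T$ crosses a segment transversally, which rules out the generic obstruction.

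Concretely, I would organize the argument around which parts of $\Gamma_T$ bound the two cells. Let $\ell$ and $\ell'$ be the lines of $T$ associated to $C$ and $C'$. By the definition of a cell, $\partial C$ contains the segment or ray carrying $c(\ell)$, and likewise for $C'$. If $C$ and $C'$ shared an interior point, then either (i) the segment/ray of $c(\ell')$ enters the interior of $C$, or (ii) some other bounding edge of $C'$ enters the interior of $C$. Case (i) would force $c(\ell')$ to cross into the region whose boundary includes $c(\ell)$; tracing back through the skewering relation, the line skewering $\ell$ has its core on the far side of $\ell$ from $C$, and I would use Lemma~\ref{lem:no-skewer-overlap} applied to the relevant subtree — specifically to the pruned skewering tree in which $\ell'$ (or $\ell$) is a leaf — to see that $c(\ell')$ cannot meet $c(\ell_1,\ell_2)$ for the consecutive pair $\ell_1 \dashv \ell_2$ lying on the path toward $\ell$, contradicting the crossing. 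Case (ii) is handled the same way after replacing $\ell'$ by whichever ancestor of $\ell'$ in $T$ supplies that bounding edge. The case of the two cells $C^+_{\ell_0}$ and $C^-_{\ell_0}$ associated to the initial line is easiest: they lie on opposite closed half-planes of $\ell_0$ and hence have disjoint interiors by construction.

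The main obstacle I anticipate is bookkeeping around \emph{unbounded} cells and overlapping rays: Lemma~\ref{lem:no-skewer-overlap} only forbids transverse ray–segment intersections, not a pair of rays running along each other, so two cells could in principle share a ray in their boundaries without sharing interior points — I need to check that such shared rays still separate the two cells rather than letting them overlap. The cleanest way to dispatch this is probably to pass to the stereographic picture on $S^2$ mentioned in the definition of $\Gamma_T$, where $\Gamma_T$ becomes a finite plane tree, its complementary faces are genuinely disjoint open disks, and each cell is (the closure-relative-interior of) exactly one such face together with one boundary edge; disjointness of distinct faces of a plane graph is then immediate, and the only thing left to verify is that distinct lines $\ell \neq \ell'$ really do get assigned distinct faces — which follows because $c(\ell)$ and $c(\ell')$ are distinct edges of the tree $\Gamma_T$, and the skewering condition pins down which of the two incident faces is selected. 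I would therefore lead with the $S^2$ reformulation, reducing the whole statement to "distinct faces of a tree embedded in $S^2$ are disjoint," and fold the earlier lemmas in only to justify that $\Gamma_T$ is genuinely a tree with no spurious crossings.
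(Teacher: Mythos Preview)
Your proposal correctly isolates the real question: cell interiors are by definition connected components of $\C - \Gamma_T$, so the only issue is whether two distinct lines $\ell,\ell'\in T$ can be assigned the \emph{same} component. But your justification for this last step is where the gap lies.

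You propose to pass to $S^2$ and argue that $\Gamma_T$ ``becomes a finite plane tree'' there, reducing everything to ``distinct faces of a tree embedded in $S^2$ are disjoint''. This fails on both ends. First, $\Gamma_T$ is generally \emph{not} a tree on $S^2$: already for $T=\{\ell_0\}$ the full line $\ell_0$ compactifies to a circle through the point at infinity, and in general the surviving rays of the various lines all meet at $\infty$ and create cycles. Second, if $\Gamma_T$ \emph{were} a tree on $S^2$ it would have exactly one complementary face, so every line would be assigned the same cell and the lemma would be false. The paper's remark that $\Gamma_T$ is ``acyclic'' is about the planar picture (no bounded cycles), not the compactified one.

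Your fallback---``$c(\ell)$ and $c(\ell')$ are distinct edges and the skewering condition picks a side''---also does not suffice: distinct edges of a plane graph can perfectly well border the same face on their selected sides (any two edges of a path do). Showing that the selected sides are genuinely different components is exactly the substance of the lemma. The paper does this by a direct two-case argument depending on whether $\ell$ and $\ell'$ lie in the same skewering sequence: it chains together the convex hulls $c(\alpha_t,\alpha_{t+1})$ along the path in $T$ from $\ell$ to $\ell'$, observes this chain is connected, and invokes Lemma~\ref{lem:no-skewer-overlap} to trap the entire chain on one side of $\ell'$ (or in one quadrant of $\C-(\ell\cup\ell')$). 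Since the cell $C_{\ell'}$ lies on the opposite side by construction, the two cells are separated. Your case~(i)/(ii) sketch points vaguely in this direction but never specifies which convex hulls to chain or why that chain stays clear of the separating line; that is the actual work.
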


\begin{proof}
    The interior of a cell for a skewering tree is a connected component
    of the complement $\mathbb{C}-\Gamma_T$, so it follows that the interiors of
    two cells overlap if and only if the interiors are identical. What remains to
    be shown is that no connected component of $\mathbb{C}-\Gamma_T$ belongs
    to the cores of two different lines in $T$.
    
    First, we consider lines $\alpha_i$ and $\alpha_j$ which come from the same 
    skewering sequence $\alpha_0 \dashv \alpha_1 \dashv \cdots \dashv \alpha_k$,
    where $0\leq i < j \leq k$. Note that the union
    \[
        c(\alpha_i,\alpha_{i+1}) \cup c(\alpha_{i+1},\alpha_{i+2}) \cup 
        \cdots\cup c(\alpha_{j-2},\alpha_{j-1})
    \]
    is a connected subset of the plane and by Lemma~\ref{lem:no-skewer-overlap}, it 
    cannot intersect $\alpha_j$. Thus, this subset lies on one side of $\alpha_j$
    (the side which contains the cell $C_{\alpha_i}$), while the cell $C_{\alpha_j}$ lies
    on the other. Thus, the cells associated to $\alpha_i$ and $\alpha_j$ are disjoint.
    
    Now, we consider lines $\alpha_i$ and $\beta_j$ which come from distinct 
    skewering sequences $\alpha_0 \dashv \alpha_1 \dashv \cdots \dashv \alpha_k$
    and $\beta_0 \dashv \beta_1 \dashv \cdots \dashv \beta_m$, where $\alpha_0 = \ell_0 = \beta_0$
    and $i,j \geq 1$. Similar to the previous case, we observe that
    \[
        \left(\bigcup_{t=1}^{i-1} c(\alpha_{t-1},\alpha_t)\right)
        \cup
        \left(\bigcup_{t=1}^{j-1} c(\beta_{t-1},\beta_t)\right)
    \]
    is a connected subset of the plane and by Lemma~\ref{lem:no-skewer-overlap},
    it must be wholly contained in one of the four components of the complement
    $\C - (\alpha_i\cup\beta_j)$. In particular, it must belong to the unique component
    which contains the core of $\alpha_i$ and the core of $\beta_j$ in its boundary. Thus,
    this region of $\C - (\alpha_i\cup\beta_j)$ does not contain the cells for either 
    $\alpha_i$ or $\beta_j$, which means that the two cells are disjoint - see
    Figure~\ref{fig:disjoint-cells} for an illustration.
\end{proof}

\begin{figure}
    \centering
    \includegraphics[width=0.5\textwidth]{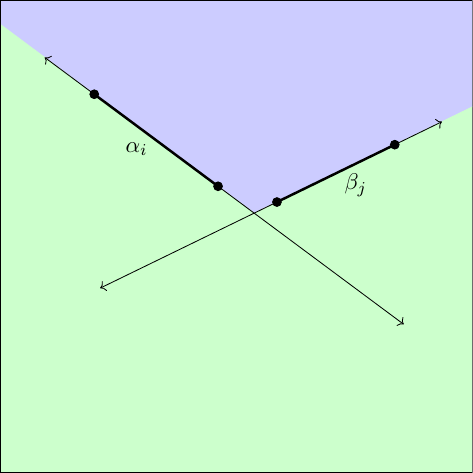}
    \caption{In the complement of the lines $\alpha_i$ and $\beta_j$, 
    the region which has the cores of $\alpha_i$ and $\beta_j$ in its boundary 
    (shaded blue here) must contain the cores of lines which skewer $\alpha_i$ and $\beta_j$,
    which means that the cells $C_{\alpha_i}$ and $C_{\beta_j}$ lie outside the shaded region.}
    \label{fig:disjoint-cells}
\end{figure}

\begin{lem}\label{lem:cells-cover}
    Let $P$ be a configuration of $n$ points which satisfies Property~$\Delta_2$.
    Then the cells of a skewering tree cover the elements of $P$.
\end{lem}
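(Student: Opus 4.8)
The plan is to show that the cells of a skewering tree $T$ partition the plane $\C$ itself (up to a measure-zero set, namely $\Gamma_T$), so that in particular every point of $P$ lies in some cell. By Lemma~\ref{lem:disjoint-cells} we already know the cells are pairwise disjoint, so it remains to prove that every point of $\C$ not lying on $\Gamma_T$ lies in some cell; equivalently, that every bounded or unbounded connected component of $\C - \Gamma_T$ is the interior of some $C_\ell$. The key structural fact to exploit is that $\Gamma_T$ is a tree (as recorded in the definition of the planar realization): a tree embedded in the sphere $S^2$ has a connected complement, but here $\Gamma_T$ is embedded in $\C$ and carries rays going to infinity, so the complement in $\C$ genuinely has several components, indexed by the ``sectors at infinity'' cut out by the rays. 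First I would set up this correspondence precisely: if $\Gamma_T$ has $r$ rays, then stereographically $\Gamma_T \cup \{\infty\}$ is a tree on $S^2$ with the point $\infty$ having degree $r$, hence $\C - \Gamma_T$ has exactly $r$ components (one between each cyclically consecutive pair of rays at $\infty$), each homeomorphic to an open half-plane.

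Next I would count cells. The skewering tree $T$ with $t$ non-initial lines has $t+1$ lines total, hence $2(t+1)$ endpoints counted with multiplicity, but more importantly each line $\ell \in T$ contributes its core $c(\ell)$ — a segment of $\Gamma_T$ — plus, when $\ell$ is a leaf, the two rays emanating from the endpoints of $c(\ell)$; when $\ell$ is a non-initial non-leaf, only the ray ``behind'' it (the side not containing the line that skewers it) survives in $\Gamma_T$, since the other potential ray is truncated — here I should double-check against the definition, which removes any ray not containing its line's core, so in fact for a non-initial line exactly one of its two rays is kept (the one on the far side from its skewerer), giving one ray per non-initial line, while $\ell_0$ keeps both of its rays. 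That gives $r = t + 2$ rays, hence $t+2$ components of $\C - \Gamma_T$. On the cell side: each of the $t$ non-initial lines $\ell$ has one cell $C_\ell$ on the side of $\ell$ away from its skewerer, and $\ell_0$ has two cells $C_{\ell_0}^+, C_{\ell_0}^-$, for a total of $t+2$ cells. So the number of cells equals the number of components of $\C-\Gamma_T$; since by Lemma~\ref{lem:disjoint-cells} distinct cells have distinct (disjoint) interiors, and each cell's interior is one component of $\C-\Gamma_T$, the $t+2$ cells biject with the $t+2$ components, i.e. the cells cover $\C - \Gamma_T$. Finally, since $P$ is in general position we may assume (after perturbing, or by genericity of the configuration) that no point of $P$ lies on any line of $\mathcal{A}$ other than the two it determines — but a point of $P$ could still lie on $\Gamma_T$ only if it is an endpoint of some $\ell \in T$; such a point is a vertex of $\Gamma_T$ and lies on the boundary of some cell, and I would argue it is covered by checking that each cell is defined to include the closed side of its boundary containing its core, together with a short argument that every endpoint of every $\ell \in T$ lies on the core-side boundary of at least one cell.

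The main obstacle I anticipate is the bookkeeping in the ray/cell count — in particular making the claim ``each non-initial line contributes exactly one ray to $\Gamma_T$'' airtight, since a ray of one line may overlap the core or ray of another (the definition of $\Gamma_T$ explicitly allows overlapping rays, and Lemma~\ref{lem:no-skewer-overlap} only rules out transverse crossings). If two rays overlap, the naive count $r = t+2$ could be wrong. I would handle this by working not with the ambient ray count but with the combinatorial tree structure directly: add the vertex $\infty$, observe that $\Gamma_T \cup \{\infty\}$ is connected and, by acyclicity of $\Gamma_T$ together with the fact that overlapping rays all limit to the same point $\infty$, is still a tree on $S^2$; then Euler's formula $V - E + F = 2$ for this spherical tree gives $F = 1$, i.e. $S^2 - (\Gamma_T\cup\{\infty\})$ is connected — wait, that is the wrong conclusion, so instead I should not collapse all rays to one point but rather cut along $\Gamma_T$ and use that a tree in the plane with its ends, viewed in $S^2$, has complement with number of components equal to (number of ends) since each end contributes one ``gap,'' and carefully define ``end'' so that overlapping rays count as a single end. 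The cleanest route, which I would ultimately take, is to induct on the number of lines in $T$ — adding a leaf $\ell$ to $T$ subdivides exactly one existing cell (the one whose boundary $\ell$ punches into, which exists because $\ell$ is skewered by a line already in $T$ and by Lemma~\ref{lem:no-skewer-overlap} doesn't cross any core) into two cells, one of which becomes $C_\ell$, while never creating an uncovered region — so the covering property is preserved at each step, with the base case $T = \{\ell_0\}$ being the trivial observation that $C_{\ell_0}^+ \cup C_{\ell_0}^- = \C$.
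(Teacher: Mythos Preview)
Your approach diverges substantially from the paper's and has a genuine gap. You aim to prove the \emph{stronger} statement that the cells cover all of $\C - \Gamma_T$, not merely the points of $P$; in particular your argument never uses that the point to be covered lies in $P$, and never invokes Property~$\Delta_2$ directly (only through Lemma~\ref{lem:no-skewer-overlap}). But the paper's own proof uses both of these in an essential way: assuming $z\in P$ lies in a non-cell region $\Omega$, it locates two lines $\ell_1,\ell_2\in T$ bounding a ``bad corner'' of $\Omega$, adjoins $z$ to the endpoints of the relevant skewering sequences, and counts internal points of this subset of $P$ to contradict $\Delta_2$. The point $z$ is what supplies the extra internal points; without it the counting does not produce a contradiction, so there is no reason to expect non-cell regions of $\C-\Gamma_T$ to be impossible --- only that they contain no element of $P$.

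Concretely, your inductive step fails. When you adjoin a leaf $\ell$ (skewered by $\ell'\in T'$), you assert that $\ell$ subdivides exactly one existing cell, citing Lemma~\ref{lem:no-skewer-overlap} for the claim that $\ell$ ``doesn't cross any core.'' But $\Gamma_{T'}$ contains far more than the cores: each leaf of $T'$ contributes rays extending to infinity, and $\ell$ can meet such a ray at an external intersection (outside both cores). Lemma~\ref{lem:no-skewer-overlap} rules out $\ell$ meeting the convex hulls $c(\ell_1,\ell_2)$ of skewering pairs, but says nothing about $\ell$ meeting a ray of another leaf far from any such hull. When that happens, adding $\ell$ carves off an extra region at the corner formed by the two rays, and this region has no core on its boundary --- it is not any cell. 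Your counting $r=t+2$ then undercounts the components of $\C-\Gamma_T$. (Equivalently, this is exactly the situation where $\Gamma_T$, as a planar subset, acquires a transverse ray--ray crossing; the paper's remark after the definition of $\Gamma_T$ only asserts that rays do not transversely meet \emph{segments}.) To repair the argument you would need an additional lemma, itself requiring $\Delta_2$, controlling external intersections among rays --- at which point you are essentially redoing the paper's direct $\Delta_2$ count.
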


\begin{proof}
    Let $z\in P$, let $T$ be a skewering tree and let $\Omega$ be the connected 
    component of $\C - \Gamma_T$ which contains $z$. Suppose for the sake of 
    contradiction that $\Omega$ does not belong to a cell of $T$. This immediately
    rules out the possibility that $\Omega$ touches only a single line in $T$,
    since that line would necessarily be a leaf and thus $\Omega$ would belong
    to the cell associated to that line. The remaining case to consider is that
    that there are distinct lines $\ell_1, \ell_2 \in T$ such that 
    \begin{enumerate}
        \item  $\ell_1$ and $\ell_2$ bound adjacent sides of $\Omega$, and
        \item $\Omega$ lies in the unique component of $\C - (\ell_1\cup\ell_2)$ which has
        neither $c(\ell_1)$ nor $c(\ell_2)$ in its boundary.
    \end{enumerate}
    Notice that this implies that the endpoints of $\ell_1$, the endpoints of
    $\ell_2$, and $z$ form a 5-element subset of $P$ where $\ell_i$ and $\ell_j$
    each have one (non-link) endpoint in the interior of the convex hull.
    Similar to the proof of Lemma~\ref{lem:disjoint-cells}, we consider 
    two cases according to whether $\ell_1$ and $\ell_2$ belong to the 
    same skewering sequence or not.

    First, suppose $T$ contains a skewering sequence 
    $\alpha_0 \dashv \alpha_1 \dashv \cdots \dashv \alpha_k$ such that
    $\ell_1 = \alpha_i$ and $\ell_2 = \alpha_j$, where $0\leq i < j \leq k$
    and let $A$ be the set containing both $z$ and the endpoints of 
    $\alpha_i, \alpha_{i+1},\ldots, \alpha_j$. Then $|A| = 2(j-i+1) + 1$ 
    and we know that $j-i$ points in $A$ are link vertices for skewers 
    involving other lines in $A$. Together with the two points mentioned 
    above, we have that $A$ is a set of $2(j-i+1) + 1$ points where $j-i+2$ 
    of them are internal, which contradicts our assumption
    that $P$ satisfies Property~$\Delta_2$.

    Next, suppose that $T$ contains skewering sequences 
    $\alpha_0 \dashv \alpha_1 \dashv \cdots \dashv \alpha_k$
    and $\beta_0 \dashv \beta_1 \dashv \cdots \dashv \beta_m$ such that 
    $\ell_1 = \alpha_i$ and $\ell_2 = \beta_j$ for some $i,j \geq 1$, and let
    $B$ be the set containing $z$ together with the endpoints of 
    $\ell_0,\alpha_1,\ldots,\alpha_i,\beta_1,\ldots,\beta_j$. 
    Then $|B| = 2(i+j+1) + 1$, and at least $i+j$ elements of $B$ are 
    internal due to being link vertices of skewers in this set. 
    By the same reasoning as in the previous case, we know that $i+j+2$ of 
    the $2i+2j+3$ points in $B$ are internal, which again contradicts our 
    assumption that $P$ satisfies Property~$\Delta_2$.

    Since both cases lead to a contradiction of Property~$\Delta_2$, we 
    conclude that $z$ must in fact belong to a cell of the skewering tree 
    $T$, and this completes the proof.    
\end{proof}

We now conclude this section by defining a subposet of $\NC(P)$ associated to each
skewering tree.

\begin{figure}
    \centering
    \includegraphics[width=0.8\textwidth]{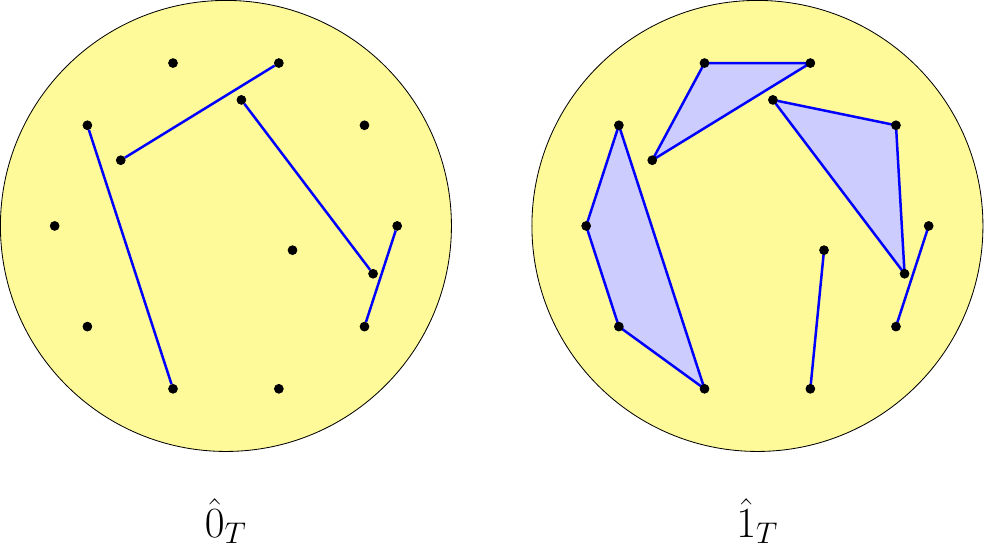}
    \caption{The minimum and maximum elements for the interval associated to the
    skewering tree drawn in Figure~\ref{fig:skewering-tree}}
    \label{fig:skewering-interval}
\end{figure}

\begin{defn}\label{def:skewering-interval}
    Let $P$ be a configuration of $n$ points satisfying Property~$\Delta_2$ and
    let $T$ be a skewering tree for $P$. We define the \emph{skewering interval} 
    $\NC(P,T)$ to be the subposet of $\NC(P)$ consisting of all partitions $\pi$
    satisfying the following conditions:
    \begin{enumerate}
        \item for each $\ell\in T$, the endpoints of $\ell$ share a block in $\pi$;
        \item endpoints of distinct lines in $T$ belong to distinct blocks in $\pi$;
        \item each block in $\pi$ has a convex hull which lies in a cell of $T$.
    \end{enumerate}
    As the name suggests, $\NC(P,T)$ is an interval in $\NC(P)$. 
    Let $\hat{0}_T$ be the partition in $\NC(P)$ for which the only non-singleton blocks
    are $V(\ell)$ for each $\ell\in T$. Let $\hat{1}_T$ be the partition where two points
    in $P$ belong to the same block if and only if they belong to the same cell of $T$
    (note that this requires us to choose a positive side for $\ell_0$ before
    discussing the skewering interval). Then $\NC(P,T)$ is the interval 
    $[\hat{0}_T,\hat{1}_T]$. See Figure~\ref{fig:skewering-interval}.
\end{defn}

We close the section with some features of skewering intervals which will be useful
in the proof of Theorem~\ref{mainthm:rank-symmetry}. To begin, we provide a
product decomposition for skewering intervals.

\begin{defn}\label{def:cell-interval}
    Let $\ell$ be a boundary line in the arrangement $\mathcal{A}$ corresponding to $P$.
    Define $\pi_\ell$ to be the partition of $P$ in which the two endpoints of $\ell$ 
    share a block, while every other block is a singleton. Further, let $\NC_\ell(P)$
    denote the interval $[\pi_\ell,\hat{1}]$ in $\NC(P)$.
\end{defn}

\begin{lem}\label{lem:skewering-intervals-decomposition}
    Let $T$ be a skewering tree for $P$. Then the skewering interval $\NC(P,T)$ is isomorphic
    to the product 
    \[
        \left(\prod_{\substack{\ell\in T\\ \ell\neq \ell_0}} \NC_\ell(V(C_\ell))\right)
        \times
        \NC_{\ell_0}(V(C_{\ell_0}^+))
        \times
        \NC(V(C_{\ell_0}^-)).
    \]
\end{lem}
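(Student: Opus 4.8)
The plan is to construct an explicit bijection between $\NC(P,T)$ and the product poset, and then check it is an order isomorphism. The guiding idea is that conditions (1)–(3) in Definition~\ref{def:skewering-interval} force every block of a partition $\pi \in \NC(P,T)$ to lie entirely within a single cell of $T$, so $\pi$ decomposes as a disjoint union of partitions $\pi|_{C}$, one for each cell $C$ of $T$. By Lemma~\ref{lem:disjoint-cells} and Lemma~\ref{lem:cells-cover}, the cells partition $P$, so this restriction map is well-defined and the data of $\pi$ is recovered by reassembling the $\pi|_C$. Thus the first step is to define the map $\Phi\colon \NC(P,T) \to \prod_{\ell \neq \ell_0}\Pi(V(C_\ell)) \times \Pi(V(C_{\ell_0}^+)) \times \Pi(V(C_{\ell_0}^-))$ by $\pi \mapsto (\pi|_{C_\ell})_{\ell}$, and to verify using disjointness and covering of cells that $\Phi$ is a bijection onto the sub-product where each factor is itself a noncrossing partition of the point set in that cell (noncrossing because $\conv$ of a block in a cell is disjoint from $\conv$ of a block in another cell, the cells being disjoint convex sets — here I would invoke the convexity of cells noted after their definition).

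The second step is to identify the image factors. For a non-initial line $\ell$, condition (1) says the two endpoints of $\ell$ share a block of $\pi$, and condition (2) says no other endpoint of a line in $T$ lies in that block; since $\conv(V(\ell))$ is one full side of the cell $C_\ell$, the relevant constraint on $\pi|_{C_\ell}$ is precisely that the two endpoints of $\ell$ are together — this is the definition of $\NC_\ell(V(C_\ell)) = [\pi_\ell, \hat 1]$, provided I check that $\ell$ restricts to a boundary line of the sub-configuration $V(C_\ell)$ (its core is a boundary edge of $\conv(V(C_\ell))$ because $C_\ell$ is convex with that one side on its boundary). For $C_{\ell_0}^+$ the same reasoning gives $\NC_{\ell_0}(V(C_{\ell_0}^+))$, and for $C_{\ell_0}^-$ there is no endpoint-sharing constraint (the core $c(\ell_0)$ is not a boundary edge of that cell's point set in a way that forces anything — rather $\ell_0$ contributes its endpoints to $C_{\ell_0}^+$), so the factor is all of $\NC(V(C_{\ell_0}^-))$. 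I would also confirm that conditions (2) and (3) impose no further restrictions beyond what these factors already encode: (3) is automatic once each block lies in a cell, and (2) is automatic once (1) holds and the cells are disjoint, since the endpoints of distinct lines lie in distinct cells (each $c(\ell)$ bounds exactly one region, by the cell definition) except possibly for $\ell_0$ whose two endpoints lie in $C_{\ell_0}^+$ — but those two are endpoints of the same line, so no conflict.

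The third step is to check that $\Phi$ and its inverse are order-preserving. This is routine: $\pi \leq \pi'$ in $\NC(P)$ iff every block of $\pi$ is contained in a block of $\pi'$, and since all blocks respect the cell decomposition on both sides, this holds iff $\pi|_C \leq \pi'|_C$ for every cell $C$, which is exactly the product order. Finally, I would remark that $\hat 0_T$ maps to the tuple of minima $(\pi_\ell, \ldots, \pi_{\ell_0}, \hat 0)$ and $\hat 1_T$ to the tuple of maxima, confirming consistency with $\NC(P,T) = [\hat 0_T, \hat 1_T]$. The main obstacle is the bookkeeping in the second step: carefully matching conditions (1)–(3) against the interval definitions $[\pi_\ell,\hat 1]$ in each cell, and in particular making sure no "cross-cell" noncrossing constraint is lost — that is, that a partition assembled from noncrossing pieces in disjoint convex cells is genuinely noncrossing in $P$, and conversely. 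Everything else is a direct unwinding of the definitions and the already-established Lemmas~\ref{lem:disjoint-cells} and~\ref{lem:cells-cover}.
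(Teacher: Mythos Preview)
Your proposal is correct and is essentially the same approach as the paper's, which simply states that the lemma follows immediately from Definitions~\ref{def:skewering-interval} and~\ref{def:cell-interval}. You have carefully unwound what ``immediate'' means here---using Lemmas~\ref{lem:disjoint-cells} and~\ref{lem:cells-cover} to see that the cells partition $P$, and then matching conditions (1)--(3) against the factor posets---which is exactly the content the paper leaves implicit.
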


\begin{proof}
    This follows immediately from Definitions~\ref{def:skewering-interval} and \ref{def:cell-interval}.
\end{proof}

Next, we note that each skewering interval is ``centered'' in the sense that the
rank of its minimum element is equal to the corank of its maximum.

\begin{defn}
    Let $\pi_0$ and $\pi_1$ be elements of $\NC(P)$ with $\pi_0 \leq \pi_1$. 
    We say that the interval $[\pi_0,\pi_1]$ is \emph{centered} if 
    $\rho(\pi_0)+\rho(\pi_1) = |P| - 1$, or equivalently if $bl(\pi_0) + bl(\pi_1) = |P| + 1$.
\end{defn}

\begin{lem}\label{lem:skewering-intervals-centered}
    Each skewering interval $\NC(P,T)$ is a centered subposet of $\NC(P)$.
\end{lem}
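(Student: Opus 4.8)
The plan is to compute the ranks of $\hat{0}_T$ and $\hat{1}_T$ directly in terms of the combinatorics of the skewering tree $T$ and its cells, and then verify the identity $\rho(\hat 0_T) + \rho(\hat 1_T) = |P|-1$ by a counting argument. Recall that $\rho(\pi) = |P| - bl(\pi)$, so the centered condition is equivalent to $bl(\hat 0_T) + bl(\hat 1_T) = |P|+1$; this is the form I would actually prove.

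First I would count $bl(\hat 0_T)$. By Definition~\ref{def:skewering-interval}, the non-singleton blocks of $\hat 0_T$ are exactly the pairs $V(\ell)$ for $\ell \in T$, and by the skewering-tree axioms these $2|T|$ endpoints are distinct. Hence $\hat 0_T$ has $|T|$ blocks of size two and $|P| - 2|T|$ singleton blocks, giving $bl(\hat 0_T) = |P| - |T|$. Next I would count $bl(\hat 1_T)$: its blocks are precisely the nonempty sets $V(C_\ell)$ for $\ell \in T$, $\ell \neq \ell_0$, together with $V(C_{\ell_0}^+)$ and $V(C_{\ell_0}^-)$. By Lemma~\ref{lem:disjoint-cells} these cells are pairwise disjoint, and by Lemma~\ref{lem:cells-cover} they cover $P$; since $P$ is finite and in general position, every one of these cells contains at least one point of $P$ (each cell $C_\ell$ contains the core $c(\ell)$ and hence its endpoints, and $C_{\ell_0}^\pm$ each contain an endpoint of $\ell_0$ unless $C_{\ell_0}^-$ happens to be empty — I would need to handle that edge case, noting that even then the formula works out since then $\hat 1_T$ simply has one fewer block and $T$'s structure compensates; more carefully, one should check whether empty cells are permitted, and if $C_{\ell_0}^-$ can be empty the bookkeeping must include it). Taking the cells as genuinely contributing one block each, there are $(|T|-1) + 2 = |T|+1$ blocks, so $bl(\hat 1_T) = |T| + 1$.

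Adding the two counts gives $bl(\hat 0_T) + bl(\hat 1_T) = (|P| - |T|) + (|T|+1) = |P| + 1$, which is exactly the centered condition. Equivalently, $\rho(\hat 0_T) = |T|$ and $\rho(\hat 1_T) = |P| - |T| - 1$, and these sum to $|P|-1$. This also matches the product decomposition in Lemma~\ref{lem:skewering-intervals-decomposition}: the length of the interval is $\sum_{\ell \neq \ell_0}(|V(C_\ell)|-1) + (|V(C_{\ell_0}^+)|-1) + (|V(C_{\ell_0}^-)|-1) = |P| - (|T|+1)$, consistent with ranks $|T|$ and $|P|-|T|-1$.

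The main obstacle I anticipate is purely bookkeeping: making sure that every cell of $T$ really does contribute exactly one block of $\hat 1_T$, i.e. that no cell of $T$ is empty of points of $P$ — in particular for the cell $C_{\ell_0}^-$ on the negative side of the initial line, which a priori could be empty if all of $P$ lies weakly on one side of $\ell_0$. I would resolve this either by observing that the definition of a skewering tree implicitly forbids empty cells (since $\ell_0 \in \mathcal{A}$ has endpoints in $P$, and a boundary choice with an empty side would still leave $C_{\ell_0}^+$ containing both endpoints — but then $\hat 1_T$ has $|T|$ blocks, not $|T|+1$, and one checks $bl(\hat 0_T) + bl(\hat 1_T) = (|P|-|T|) + |T| = |P|$, which fails the centered condition), or, more likely, by noting that the relevant statement in the paper's context always has $C_{\ell_0}^-$ nonempty because $\ell_0$ is drawn from a configuration with internal points on both sides, or because the convention is that a skewering interval is only discussed once a move across $\ell_0$ is in play, forcing points on both sides. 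I would state the nonemptiness of all cells as the one nontrivial input, cite Lemma~\ref{lem:cells-cover} and general position for it, and then the arithmetic closes immediately.
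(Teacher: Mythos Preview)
Your approach is exactly the paper's: the paper's proof simply asserts that $\hat 0_T$ has rank $|T|$ and $\hat 1_T$ has rank $|P|-(|T|+1)$ and adds, without spelling out the block counts as you do. The edge case you flag (an empty $C_{\ell_0}^-$) is not addressed in the paper's proof either---in every application the negative side of $\ell_0$ is nonempty by construction---so you are not missing anything the paper supplies.
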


\begin{proof}
    Let $T$ be a skewering tree for $P$. Then the partition $\hat{0}_T$ has rank $|T|$ 
    and $\hat{1}_T$ has rank $|P| - (|T|+1)$, so the interval $[\hat{0}_T,\hat{1}_T]$ is
    centered.
\end{proof}

Finally, we examine the ways in which skewering intervals can intersect.

\begin{lem}\label{lem:skewering-intervals-disjoint}
    If $P$ has Property~$\Delta_2$, then distinct maximal skewering 
    trees with the same initial line and choice of positive side 
    yield disjoint skewering intervals.
\end{lem}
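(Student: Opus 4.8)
The plan is to argue by contradiction: suppose $T$ and $T'$ are distinct maximal skewering trees with common initial line $\ell_0$ and common positive side, but there exists a partition $\pi \in \NC(P,T) \cap \NC(P,T')$. The key structural observation is that membership of $\pi$ in $\NC(P,T)$ forces the non-singleton blocks of $\pi$ to \emph{record} the lines of $T$: by condition (1) of Definition~\ref{def:skewering-interval}, each $\ell \in T$ has $V(\ell)$ contained in a block of $\pi$, and by condition (2) distinct lines of $T$ land in distinct blocks. So from $\pi$ alone, together with knowledge of which pairs of points in a block are endpoints of a tree line, one should be able to reconstruct a large part of the combinatorial data of $T$. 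The first step is to make this precise — I would show that if $\pi \in \NC(P,T)$, then for every line $\ell \in T$, the block of $\pi$ containing $V(\ell)$ has its convex hull inside the cell $C_\ell$, and conversely the cell decomposition is essentially forced by $\pi$ and the positive side of $\ell_0$.

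The second step is to identify the first place where $T$ and $T'$ differ. Since both are built inductively starting from $\ell_0$ (with the same positive side), and both satisfy the skewering-tree axioms, consider a line $\ell$ that lies in $T$ but not $T'$ (or vice versa) — such a line exists since the trees are distinct, and by maximality neither tree can be extended, so we cannot have $T \subsetneq T'$. I would take $\ell \in T \setminus T'$ with the property that the line $\ell'$ skewering $\ell$ in $T$ lies in $T \cap T'$ (this exists by walking back along the skewering sequence from $\ell$ toward $\ell_0$ until we first leave $T \setminus T'$; the initial line $\ell_0$ is in both). Now $V(\ell)$ shares a block $B$ of $\pi$, and $\conv(B) \subseteq C_\ell \subseteq$ the side of $\ell'$ opposite $c(\ell')$. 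On the other hand, $\pi \in \NC(P,T')$ forces $B$ to lie entirely within some cell $C'$ of $T'$; since $\ell \notin T'$, the points of $V(\ell)$ must be distributed according to the cells of $T'$, and the link vertex of the skewer $\ell' \dashv \ell$ is an internal point of $c(\ell', \ell)$, which constrains which cell of $T'$ can contain all of $B$.

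The third and central step is to derive the contradiction from this. Because $\ell' \in T'$ and $\ell' \dashv \ell$ in $T$, the core $c(\ell)$ lies on the far side of $\ell'$, and the endpoint of $\ell$ serving as link vertex is interior to $c(\ell',\ell)$. But in $T'$, the cell structure on the far side of $\ell'$ is determined by whichever lines of $T'$ (if any) are skewered off in that direction — and condition (3) for $\pi \in \NC(P,T')$ says $B$ must sit in a single such cell. I expect the contradiction to come from a counting argument in the spirit of Lemmas~\ref{lem:no-skewer-overlap} and~\ref{lem:cells-cover}: assembling $V(\ell_0), V(\ell'), V(\ell)$ together with the other endpoints along the skewering sequences and the link vertices, one gets a subset of $P$ with too many internal points relative to its size, violating Property~$\Delta_2$. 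Alternatively — and this may be cleaner — maximality of $T'$ is the obstruction: if $B$ genuinely fits inside a cell of $T'$ while containing $V(\ell)$ as a skewered pair relative to $\ell' \in T'$, then $\ell$ could have been added to $T'$, contradicting maximality. The main obstacle will be ruling out the possibility that $\ell$ fails to be addable to $T'$ for a reason unrelated to $\pi$ (e.g.\ its link vertex already serves as a link vertex for another skewer in $T'$); handling that case is where I expect to need the cell-disjointness and cell-covering lemmas, together with the fact that $\pi$ respects \emph{both} cell decompositions simultaneously, which pins down the shared structure tightly enough to force $T = T'$.
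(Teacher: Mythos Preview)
Your setup is sound --- argue by contradiction and locate a common line that skewers something in $T\setminus T'$ --- but neither of your proposed contradiction mechanisms is what the paper uses, and the obstacle you flag at the end is exactly where your plan stalls. The paper does not run a $\Delta_2$ counting argument, nor does it try to append $\ell$ to $T'$ and invoke maximality at that stage. Instead, maximality is used only to upgrade the branching configuration: one finds $\alpha \in T \cap T'$ that skewers \emph{both} some $\ell \in T$ and some distinct $\ell' \in T'$. The contradiction is then pure half-plane geometry. Writing $H^-_{\alpha,\ell}$ (resp.\ $H^-_{\alpha,\ell'}$) for the closed half-plane bounded by $\ell$ (resp.\ $\ell'$) not containing $c(\alpha)$, the relation $\hat{0}_T \leq \pi \leq \hat{1}_{T'}$ gives $\hat{0}_T \leq \hat{1}_{T'}$, which forces $c(\ell)$ into the cell $C_{\ell'}$ of $T'$ and hence into $H^-_{\alpha,\ell'}$; symmetrically $c(\ell') \subseteq H^-_{\alpha,\ell}$. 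So $c(\ell)$ and $c(\ell')$ both lie in $H^-_{\alpha,\ell}\cap H^-_{\alpha,\ell'}$ while $c(\alpha)$ does not --- yet for $\alpha$ to skewer both $\ell$ and $\ell'$, its core must lie in precisely that region (cf.\ the reasoning in Lemma~\ref{lem:disjoint-cells}). That is the contradiction.

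Your maximality route is not wrong in spirit, but the case you worry about --- the link vertex of $\alpha \dashv \ell$ already serving as a link vertex in $T'$ --- is essentially the situation $\alpha \dashv \ell'$ for some $\ell' \in T'$, and resolving it forces you into the half-plane argument above anyway. The counting route is too vague as stated: having $\ell$ sit awkwardly relative to $T'$ does not by itself manufacture enough internal points for a $\Delta_2$ violation.
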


\begin{proof}
    Let $T$ and $T'$ be distinct maximal skewering trees and suppose that the partition $\pi$
    is contained in both $\NC(P,T)$ and $\NC(P,T')$. In other words, both
    $\hat{0}_T \leq \pi \leq \hat{1}_T$ and $\hat{0}_{T'} \leq \pi \leq \hat{1}_{T'}$.    
    Since $T$ and $T'$ are distinct and maximal, there must be lines $\alpha,\ell$, and 
    $\ell'$ such that
    $\alpha,\ell \in T$, $\alpha,\ell' \in T'$, and $\alpha$ skewers both $\ell$ and $\ell'$.
    Note that by Lemma~\ref{lem:no-skewer-overlap}, $\ell$ and $\ell'$ cannot skewer one another,
    and by definition of a skewering tree, $\ell$ and $\ell'$ do not intersect internally.
    Therefore the two lines are either parallel or have an external intersection.

    Let $H^-_{\alpha,\ell}$ and $H^-_{\alpha,\ell'}$ denote the closed half-planes bounded by
    $\ell$ and $\ell'$ respectively which do not include the core of $\alpha$. 
    Since $\alpha$ skewers $\ell$, we know that the cell $C_\ell$ (and therefore the core $c(\ell)$)
    must belong to $H^-_{\alpha,\ell}$; the analogous statement holds for $\ell'$.
    By combining the inequalities above, we see that $\hat{0}_T \leq \hat{1}_{T'}$, which means that
    the core $c(\ell)$ must be contained in the cell $C_{\ell'}$, which implies that $c(\ell)$
    belongs to $H^-_{\alpha,\ell'}$. Similarly, the fact that $\hat{0}_{T'} \leq \hat{1}_T$ tells 
    us that $c(\ell')$ is contained in $H^-_{\alpha,\ell}$. 

    Combining all of the above, we know that the intersection 
    $H^-_{\alpha,\ell}\cap H^-_{\alpha,\ell'}$ includes both $c(\ell)$ and $c(\ell')$, but not
    $c(\alpha)$. However, by the same reasoning used in the proof of
    Lemma~\ref{lem:disjoint-cells}, this is precisely the region in which the core of $\alpha$ must 
    be placed for it to skewer both $\ell$ and $\ell'$. Therefore, we have a contradiction, so the 
    skewering intervals 
    for $T$ and $T'$ must be disjoint.
\end{proof}

\section{Property $\Delta_1$ and Catalan Numbers}
\label{sec:delta-1}

Our strategy for proving Theorem~\ref{mainthm:catalan-counting} is
to show that if $P$ satisfies Property~$\Delta_1$, then applying a 
$\Delta_1$-move to $P$ does not change the size of $\NC(P)$. From here,
applying Theorem~\ref{thm:delta-k-connectivity} completes the proof.
We begin with a useful lemma and some terminology, then give the proof
of Theorem~\ref{mainthm:catalan-counting}.

\begin{figure}
    \centering
    \includegraphics[width=0.5\textwidth]{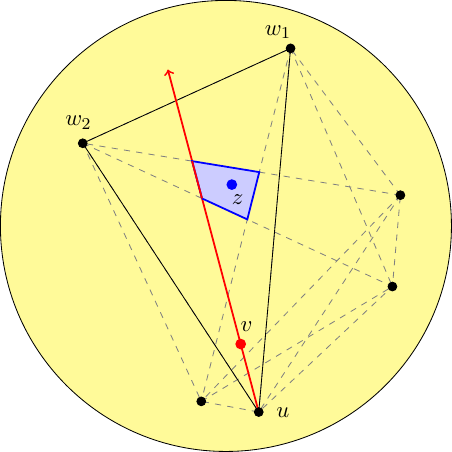}
    \caption{If the region $R_z$ has a side which belongs to a line through an
    interior point $v$, then $z$ and $v$ must both belong to a common triangle.}
    \label{fig:region-convex-polygon}
\end{figure}

\begin{lem}\label{lem:convex-polygon}
    Suppose $P$ satisfies Property $\Delta_1$ and let $z\in \interior(P)$. 
    Then $R_z = R_z^\text{ex}$.
\end{lem}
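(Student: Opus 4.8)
The plan is to prove the equality by a reduction: the inclusion $R_z \subseteq R_z^{\text{ex}}$ is automatic, and the reverse inclusion follows as soon as we know that every side of the polygon $R_z$ lies on a line of $\aext$; a ``bad'' side will then be shown to violate Property~$\Delta_1$. First I would record the easy direction. Since $P$ is in general position and $z \in \interior(P)$, no line through two extremal points passes through $z$, so $\aext \subseteq \mathcal{A}^z$; removing fewer lines can only enlarge the component containing $z$, giving $R_z \subseteq R_z^{\text{ex}}$. For the reverse inclusion, I claim it is enough to show that every side of $R_z$ lies on a line of $\aext$. Indeed, suppose this holds and let $R'$ be the component of $\mathbb{C}-\aext$ containing $z$; then $R_z\subseteq R'$, and if the inclusion were proper we could pick $x\in R'\setminus R_z$ and join it to $z$ by a segment lying in the convex set $R'$, hence disjoint from $\bigcup\aext$. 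That segment must meet $\partial R_z$ (as $z\in R_z$, $x\notin R_z$), i.e. meet one of the lines bounding $R_z$; but those lines lie in $\aext$, a contradiction. So $R' = R_z$, and since $R_z^{\text{ex}}$ is by definition the component of $\mathbb{C}-\aext$ containing $z$, we get $R_z = R_z^{\text{ex}}$.

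So assume toward a contradiction that some side $s$ of $R_z$ lies on a line $\ell = \overline{vw}$ of $\mathcal{A}^z$ with $v\in\interior(P)$ (the other endpoint $w$ may be extremal or internal). The aim is to produce a triangle with vertices in $P$ whose interior contains both $z$ and $v$. Such a triangle has at least two points of $P$ in its interior, and this contradicts the simplified form of Property~$\Delta_1$ recorded after its definition (each triple of points of $P$ has at most one point of $P$ in its interior), which would complete the proof.

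To build the triangle I would exploit that $s$ is a side of the convex region $R_z$, so the open segment joining $z$ to any point of $s$ meets no line of $\mathcal{A}^z$; in particular $z$ ``sees'' the line $\ell$, and after a short argument the point $v$ itself, without obstruction. Extending the chord of $\conv(P)$ through $z$ and $v$, and the chord through $v$ and $w$, to the boundary of $\conv(P)$ yields extremal points that, once one checks on which side of the relevant lines $z$ and $v$ fall, assemble into a single triangle containing $\{z,v\}$ in its interior; this is exactly the picture in Figure~\ref{fig:region-convex-polygon}. I expect this last step to be the main obstacle: the extremal points must be chosen with care, with a case split according to whether $w$ is extremal or internal and whether $v$ lies on the segment $s$ or beyond one of its endpoints, so that the final object is genuinely a triangle rather than a quadrilateral — a convex quadrilateral with two interior points is permitted by $\Delta_1$, so it is essential that the construction collapse to three vertices.
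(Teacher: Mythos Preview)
Your reduction is exactly the paper's: $R_z \subseteq R_z^{\text{ex}}$ is automatic from $\aext\subseteq\mathcal{A}^z$, and equality follows once every side of $R_z$ lies on a line of $\aext$; a side lying on a line $\ell$ with an internal endpoint $v$ must then yield a triangle with vertices in $P$ containing two points of $P$ in its interior, contradicting the ``at most one interior point per triangle'' form of Property~$\Delta_1$.

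Where you diverge is in building that triangle. You aim for three \emph{extremal} vertices by extending the chords through $z,v$ and through $v,w$ to $\partial\conv(P)$, and you rightly worry this may produce four vertices rather than three. Two remarks. First, the step ``after a short argument $z$ sees the point $v$ itself'' is not justified and need not hold: the side $s\subset\ell$ of $R_z$ is just some segment of $\ell$, with no reason to contain $v$, so the segment from $z$ to $v$ can perfectly well cross lines of $\mathcal{A}^z$. Second --- and this is the paper's shortcut that dissolves your anticipated case split --- there is no need for all three vertices to be extremal. Use $w$ itself (the other endpoint of $\ell$) as one vertex: extend $\ell$ from $w$ through $v$ until it meets a boundary edge of $\conv(P)$ with extremal endpoints $w_1,w_2$. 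The triangle $\{w,w_1,w_2\}$ then contains $v$ (which lies on the segment from $w$ to the edge $w_1w_2$), and it also contains $z$, because its three sides lie on the lines $\overline{ww_1}$, $\overline{ww_2}$, $\overline{w_1w_2}$ of $\mathcal{A}^z$ while $R_z$ has the side $s\subset\ell$ inside this triangle, so $R_z$ cannot escape it. That is the entire construction: no case split on whether $w$ is extremal or on where $v$ sits relative to $s$, and no quadrilateral to collapse.
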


\begin{proof}
    We know by definition that $R_z$ and $R_z^\text{ex}$ are convex polygons
    with $R_z \subseteq R_z^\text{ex}$, so we just need
    to show that each of its sides is a subset of a line in $\aext$. Suppose 
    that one of the sides for $R_z$ is a subset of a line $\ell$ with endpoints 
    $u$ and $v$, where $v$ (and possibly $u$ as 
    well) is internal. Then $\ell$ must not contain any other points in $P$
    (since $P$ is assumed to be in general position), so it     
    must eventually intersect the boundary of $\conv(P)$ in some edge 
    between extremal vertices $w_1$ and $w_2$, and thus 
    $v$ lies within the triangle with vertex set $\{u,w_1,w_2\}$. 
    Since $\ell$ was assumed to contain a side of $R_z$,
    we know that $z$ must also be contained in the same triangle - see
    Figure~\ref{fig:region-convex-polygon} for an illustration.
    But this implies that $P$ does not satisfy Property~$\Delta_1$, 
    which is a contradiction.
\end{proof}

\begin{rem}\label{rem:adjacent-noncrossing}
    Let $P$ be a configuration satisfying Property $\Delta_1$,
    suppose that $z\in P$ is adjacent to a line $\ell$ in $\aext$, 
    and let $\pi$ be a partition of $P$. 
    Since $P$ satisfies Property~$\Delta_1$, we know by 
    Lemma~\ref{lem:convex-polygon} that $\ell$ contains an edge of
    the region $R_z$. In other words, there are no lines in the
    arrangement $\mathcal{A}^z$ which lie between $\ell$ and $z$.
    This implies that if $B_z$ and $B_\ell$ are the blocks in $\pi$
    containing $z$ and the endpoints of $\ell$ respectively, then 
    $\conv(B_z)$ and $\conv(B_\ell)$ are disjoint if and only if 
    $B_z \cap H_{z,\ell}^- = \emptyset$ and $B_\ell \cap H_{z,\ell}^+ = \emptyset$.
\end{rem}

\begin{defn}\label{defn:pre-post-noncrossing}
    Let $m\colon P \to m(P)$ be a move. We say that $\pi \in \Pi(P)$
    is \emph{pre-$m$-noncrossing} if $\pi$ is noncrossing, but 
    its image $m_*(\pi) \in \Pi(m(P))$ is not. Similarly, we say that 
    $\mu \in \Pi(m(P))$ is \emph{post-$m$-noncrossing} if $\mu$ is 
    noncrossing, but its preimage $m_*^{-1}(\mu) \in \Pi(P)$ is not.
    Then $\NC(m(P))$ can be obtained
    from $\NC(P)$ by the following procedure: remove partitions which are 
    pre-$m$-noncrossing, apply $m_*$ to all remaining elements,
    then add in the post-$m$-noncrossing partitions. 
\end{defn}

We are now ready to prove the main theorem of this section.

\begin{thm}[Theorem~\ref{mainthm:catalan-counting}]
    \label{thm:catalan-counting}
    Let $P \subset \C$ be a configuration of $n$ points
    which satisfies Property~$\Delta_1$. Then $\NC(P)$ is a rank-symmetric graded 
    lattice, and the number of elements with rank $k$ is
    the Narayana number $N_{n,k}$. In particular, $|\NC(P)| = C_n = |\NC_n|$.
\end{thm}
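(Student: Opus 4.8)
The plan is to follow the strategy the authors have already outlined: use Theorem~\ref{thm:delta-k-connectivity} to reduce to the convex case, where the result is classical (Example~\ref{ex:ncp-classical}), by showing that a single $\Delta_1$-move preserves the rank-generating function of $\NC(P)$. Since $\NC(P)$ is a graded lattice by Proposition~\ref{prop:ncp-lattice} and rank is the number of points minus the number of blocks, it suffices to prove that each $\Delta_1$-move preserves the number of noncrossing partitions of each fixed block count. By Definition~\ref{defn:pre-post-noncrossing}, this amounts to showing that for every $\Delta_1$-move $m$, the number of pre-$m$-noncrossing partitions with a given number of blocks equals the number of post-$m$-noncrossing partitions with that same number of blocks (note $m_*$ preserves block counts). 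So the heart of the matter is to construct a rank-preserving bijection between the pre-$m$-noncrossing and post-$m$-noncrossing partitions.

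The natural candidate for that bijection is the block-switching map $\BS_m$ from Definition~\ref{def:block-switching}, which is already advertised as a rank-preserving bijection $\Pi(P) \to \Pi(m(P))$. The key claim to establish is: \emph{if $\pi \in \Pi(P)$ is pre-$m$-noncrossing, then $\BS_m(\pi)$ is post-$m$-noncrossing, and $\BS_m$ restricts to a bijection between these two sets.} To analyze when a partition becomes crossing under a move, I would use the geometry developed in this section. A $\Delta_1$-move takes $z$ across a line $\ell$, and by Theorem~\ref{thm:delta-k-connectivity}'s proof we may assume $\ell \in \aext$ is non-separating (moves across lines with an internal endpoint can be reduced away), or more carefully, we classify moves by the type of line crossed. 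By Lemma~\ref{lem:convex-polygon} and Remark~\ref{rem:adjacent-noncrossing}, when $P$ has Property~$\Delta_1$ and $z$ is adjacent to $\ell$ with endpoints $w_1, w_2$, a partition $\pi$ stays noncrossing precisely when the blocks containing $z$ and $\{w_1,w_2\}$ interact correctly with the half-planes $H^\pm_{z,\ell}$. The upshot should be that $\pi$ is pre-$m$-noncrossing exactly when the only crossing introduced is between the block $B_z$ of $z$ and the block $B_\ell$ containing both $w_1$ and $w_2$ — and this is precisely the configuration in which $\BS_m$ acts nontrivially by swapping $\{w_1,w_2\}$ out of $B_\ell$ and $\{m(z)\}$ out of $B_z$. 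One then checks the swap repairs the crossing in $m(P)$ and, symmetrically, that $\BS_m^{-1}$ sends post-$m$-noncrossing partitions back to pre-$m$-noncrossing ones, so the restriction is a bijection.

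Once the bijection is in hand, the count follows: $|\NC(m(P))| = |\NC(P)| - (\text{pre-}m\text{-noncrossing}) + (\text{post-}m\text{-noncrossing})$ with the two correction terms equal rank-by-rank, so the rank-generating polynomial of $\NC(P)$ is invariant under $\Delta_1$-moves. Applying Theorem~\ref{thm:delta-k-connectivity} with $k=1$, any $P$ with Property~$\Delta_1$ connects via $\Delta_1$-moves to a convex configuration $Q$, for which $\NC(Q) \cong \NC_n$ has rank-$k$ count equal to $N_{n,k}$ (Example~\ref{ex:ncp-classical}); hence the same holds for $\NC(P)$. Rank-symmetry of $\NC(P)$ then follows from $N_{n,k} = N_{n,n-k}$, and summing gives $|\NC(P)| = C_n$. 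That $\NC(P)$ is a graded lattice is already Proposition~\ref{prop:ncp-lattice}.

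The main obstacle I anticipate is the geometric case analysis verifying that $\BS_m$ maps pre-$m$-noncrossing partitions bijectively onto post-$m$-noncrossing ones — in particular, confirming that no other ``type'' of crossing can be created by a single $\Delta_1$-move. This is exactly where Property~$\Delta_1$ (as opposed to $\Delta_2$) must be used essentially: one needs that crossing the line $\ell$ cannot simultaneously create a crossing between $B_z$ and a third block, which would make the correction non-invertible. Lemma~\ref{lem:convex-polygon} (giving $R_z = R_z^{\text{ex}}$) and Remark~\ref{rem:adjacent-noncrossing} are the tools that should rule this out, but assembling them into a clean proof that handles boundary lines, non-separating $\aext$-lines, and the reduction moves uniformly will require care.
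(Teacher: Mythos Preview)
Your proposal is correct and follows essentially the same approach as the paper: reduce to the convex case via Theorem~\ref{thm:delta-k-connectivity}, and show that for each $\Delta_1$-move $m$ the block-switching map $\BS_m$ carries pre-$m$-noncrossing partitions bijectively onto post-$m$-noncrossing partitions, using the half-plane criterion of Remark~\ref{rem:adjacent-noncrossing}. Your anticipated case analysis over line types is unnecessary, though: Lemma~\ref{lem:convex-polygon} gives $R_z = R_z^{\text{ex}}$ under Property~$\Delta_1$, so \emph{every} $\Delta_1$-move crosses a line of $\aext$, and the paper's argument is uniform over all such moves rather than restricted to non-separating ones.
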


\begin{proof}
    By Theorem~\ref{thm:delta-k-connectivity}, we know there is a sequence of
    $\Delta_1$-moves which transforms $P$ into a convex configuration, for
    which we know the lattice of noncrossing partitions is isomorphic to 
    $\NC_n$. Therefore, we need only show that if $m\colon P\to m(P)$ is a 
    $\Delta_1$-move, then the lattices $\NC(P)$ and $\NC(m(P))$ have the same
    number of elements in each rank.
    
    Our strategy is to show that the block-switching map $\BS_m$
    restricts to a rank-preserving  bijection $\NC(P) \to \NC(m(P))$.
    Suppose that $m\colon P\to m(P)$ is a $\Delta_1$-move which brings 
    the point $z$ across the line $\ell$ in $\aext$, where the 
    endpoints of $\ell$ are $w_1$ and $w_2$, and let $m(z) = y$.

    Let $\pi$ be a partition of $P$. Applying Remark~\ref{rem:adjacent-noncrossing}
    and the fact that $H_{y,\ell}^- = H_{z,\ell}^+$ and $H_{y,\ell}^+ = H_{z,\ell}^-$,
    we have the following sequence of equivalences:
    \begin{align*}
        \text{$\pi$ is pre-$m$-noncrossing} &\leftrightarrow
        \begin{aligned}[t]
            &\text{in $\pi$: $B_z \cap H_{z,\ell}^-$ and 
            $B_\ell \cap H_{z,\ell}^+$ are empty;} \\
            & \text{either $B_{y} \cap H_{y,\ell}^-$ or 
            $B_\ell \cap H_{y,\ell}^+$ is nonempty}
        \end{aligned} \\[1ex]
        &\leftrightarrow
        \begin{aligned}[t]
            &\text{in $\BS_m(\pi)$:  $B_\ell \cap H_{z,\ell}^-$ and 
            $B_{y} \cap H_{z,\ell}^+$ are empty;} \\
            &\text{either $B_{\ell} \cap H_{y,\ell}^-$ or 
            $B_{z} \cap H_{y,\ell}^+$ is nonempty}
        \end{aligned} \\[1ex]
        &\leftrightarrow
        \begin{aligned}[t]
            &\text{in $\BS_m(\pi)$: $B_\ell \cap H_{y,\ell}^+$ and 
            $B_{y} \cap H_{y,\ell}^-$ are empty;} \\
            &\text{either $B_{\ell} \cap H_{z,\ell}^+ $ or 
            $B_{z} \cap H_{z,\ell}^-$ is nonempty}
        \end{aligned} \\[1ex]
        &\leftrightarrow
        \text{$\BS_m(\pi)$ is post-$m$-noncrossing}
    \end{align*}
    Thus, the block-switching map $\BS_m$ induces a rank-preserving bijection 
    between the pre-$m$-noncrossing partitions of $P$ and the post-$m$-noncrossing 
    partitions of $m(P)$, and we are done.
\end{proof}

\section{Property $\Delta_2$ and Rank Symmetry}
\label{sec:delta-2}

\begin{figure}
	\centering
    \includegraphics[width=0.8\textwidth]{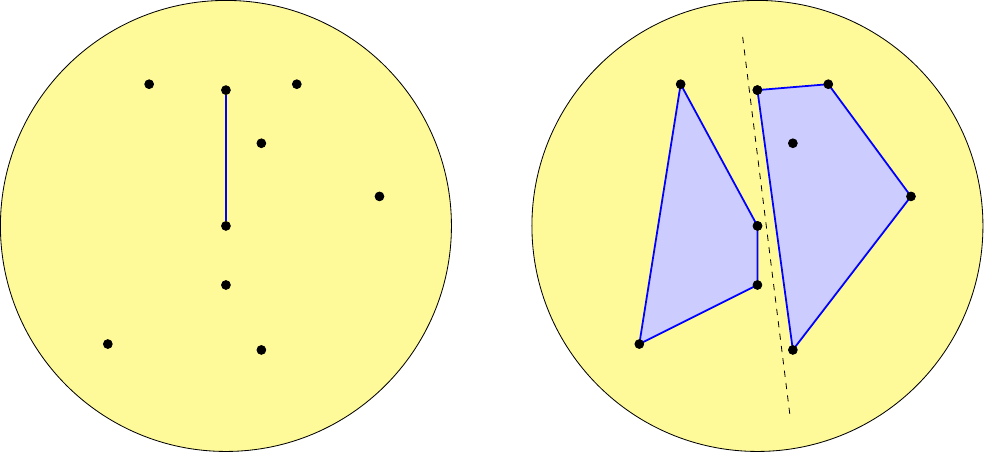}
    \caption{An atom in $\NC(P)$ with the corresponding coatom}
    \label{fig:ncp-atomCoatom}
\end{figure}

We now turn our attention to Theorem~\ref{mainthm:rank-symmetry},
in which we demonstrate that if $P$ satisfies Property~$\Delta_2$, 
then $\NC(P)$ is rank-symmetric. As a first step, one could write an
explicit bijection between the atoms and coatoms of $\NC(P)$,
where $P$ is an arbitrary configuration of $n$ points. This was
previously demonstrated by Razen and Welzl in the special case
where $P$ is in general position \cite{razen13} and is straightforward
to generalize. In short: each atom is determined by a single line
in $\mathcal{A}$, and by slightly rotating this line counterclockwise 
about the midpoint of its core, we obtain a line which divides $P$ into
two pieces, thus producing a coatom in $\NC(P)$ - see Figure
\ref{fig:ncp-atomCoatom} for an illustration.

One could hypothetically prove Theorem~\ref{mainthm:rank-symmetry} 
by extending the map above to a rank-reversing bijection from $\NC(P)$
to itself, but this seems intractable in general. Instead, our proof
technique is similar to that of Theorem~\ref{mainthm:catalan-counting}, although
the weakening of our hypotheses from Property~$\Delta_1$ to 
Property~$\Delta_2$ means that Lemma~\ref{lem:convex-polygon} no longer holds.
That is, it is possible that while some points in $P$ are adjacent to 
non-separating lines in $\mathcal{A}$, none of these lines are in $\aext$.
As a result, we must account for moves which take an interior point across a
line with an interior endpoint, which in turn means we need to understand situations
where a partition block overlaps with the line being moved across. 
To do so, we use the results on skewers developed in Section~\ref{sec:skewers}.

First, we
require a technical lemma regarding the interval $\NC_\ell(P)$ introduced in
Definition~\ref{def:cell-interval}. For the remainder of this section, let $P$ 
denote a configuration of $n$ points in $\C$ which satisfies Property~$\Delta_2$.

\begin{defn}
    Let $\ell$ be a boundary line in the arrangement $\mathcal{A}$ corresponding to $P$.
    We say that $P$ \emph{satisfies Property~$\Delta_1$ relative to $\ell$} if each
    $B\subseteq P$ with $V(\ell) \subseteq B$ has at most 
    $\lfloor \frac{|B|-2}{2} \rfloor$ internal points.
\end{defn}

The motivation for the preceding definition comes from its appearance in
certain skewering trees for configurations which satisfy Property~$\Delta_2$.

\begin{lem}\label{lem:skewering-trees-relative-delta-1}
    Let $T$ be a skewering tree for $P$ with initial line $\ell_0$. 
    Suppose that $\ell_1$ is a line in $T$ with $\ell_0 \dashv \ell_1$,
    and that $y$ is a point in $P$ which lies in the convex hull $c(\ell_0,\ell_1)$ 
    on the non-positive side of $\ell_0$. Then 
    $V(C_{\ell_0}^+)$ satisfies Property~$\Delta_1$ relative to $\ell_0$,
    and for each $\ell\in T$ with $\ell \neq \ell_0$, 
    $V(C_\ell)$ satisfies Property~$\Delta_1$ relative to $\ell$.
\end{lem}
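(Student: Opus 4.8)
The plan is to argue by contradiction, handling the two assertions of the lemma uniformly. Suppose the conclusion fails; then there is a cell $C$ --- either $C_{\ell_0}^+$, with bounding line $\lambda = \ell_0$, or $C_\ell$ for some $\ell\in T\setminus\{\ell_0\}$, with bounding line $\lambda=\ell$ --- together with a subset $B\subseteq V(C)$ such that $V(\lambda)\subseteq B$ and $|\interior(B)| > \lfloor\frac{|B|-2}{2}\rfloor$. Since $P$ has Property~$\Delta_2$ we also have $|\interior(B)|\le\lfloor\frac{|B|-1}{2}\rfloor$, and these two bounds are compatible only when $|B|$ is odd, say $|B| = 2m+1$, in which case $|\interior(B)| = m$ exactly. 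Because $V(C)$ lies in a closed half-plane bounded by $\lambda$ and (by general position) meets $\lambda$ only in its two endpoints, the core $c(\lambda)$ is an edge of $\conv(B)$, so both endpoints of $\lambda$ are extremal points of $B$.

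The next step is to enlarge $B$ by crossing $\lambda$ along the skewering tree. Let $\ell_0 = \alpha_0\dashv\alpha_1\dashv\cdots\dashv\alpha_j = \lambda$ be the skewering path in $T$ from the initial line down to $\lambda$ (so $j=0$ when $\lambda = \ell_0$), and set
\[
    B' \;=\; B\;\cup\; V(\alpha_0)\cup V(\alpha_1)\cup\cdots\cup V(\alpha_{j-1})\;\cup\;V(\ell_1)\;\cup\;\{y\}.
\]
By Lemma~\ref{lem:disjoint-cells} the cells $C_{\ell_0}^+$, $C_{\ell_0}^-$, $C_{\alpha_1},\dots,C_{\alpha_{j-1}}$, $C_{\ell_1}$, and $C$ are pairwise disjoint and each contains its own core; together with the claim, discussed below, that $y$ lies in $C_{\ell_0}^-$, this shows that the set adjoined to $B$ consists of the single new point $y$ together with $N$ genuinely new pairs of endpoints, where $N\ge 1$. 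Consequently $|B'| = (2m+1)+2N+1 = 2(m+N)+2$.

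The purpose of this construction is that each adjoined pair of endpoints forces a new internal point of $\conv(B')$. Indeed, for $1\le i\le j$ the link vertex of the skewer $\alpha_{i-1}\dashv\alpha_i$ lies in the interior of the two-dimensional convex hull $\conv(V(\alpha_{i-1})\cup V(\alpha_i))\subseteq\conv(B')$, and the link vertex $v_0$ of $\ell_0\dashv\ell_1$ lies in the interior of $\conv(V(\ell_0)\cup V(\ell_1)) = c(\ell_0,\ell_1)\subseteq\conv(B')$. Each such link vertex is an endpoint of an adjoined line and is extremal (hence not internal) in $B$, and no point of $P$ is a link vertex for two skewers of $T$, so these link vertices are distinct and contribute at least $N$ internal points of $B'$ beyond the $m$ internal points of $B$. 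Finally $y\in c(\ell_0,\ell_1)\subseteq\conv(B')$, and in fact $y$ lies in the \emph{interior} of $c(\ell_0,\ell_1)$, so $y\in\interior(\conv(B'))$, and $y$ is distinct from all of the link vertices (it lies in $C_{\ell_0}^-$, they do not). Altogether $|\interior(B')|\ge m+N+1 > m+N = \lfloor\frac{|B'|-1}{2}\rfloor$, contradicting Property~$\Delta_2$. This contradiction would establish both parts of the lemma.

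The step I expect to be the main obstacle is the claim that $y$ lies in $C_{\ell_0}^-$ and in the interior of $c(\ell_0,\ell_1)$. The idea is a local analysis inside the triangle $c(\ell_0,\ell_1)$: because $\ell_0\dashv\ell_1$, the core $c(\ell_1)$ is an edge of this triangle, $\ell_0$ passes through the opposite vertex and crosses $c(\ell_1)$ in its relative interior, and the link vertex $v_0$ is an interior point of the triangle lying on $\ell_0$; moreover, by Lemma~\ref{lem:no-skewer-overlap} no line of $T$ other than $\ell_0$ and $\ell_1$ meets $c(\ell_0,\ell_1)$, so inside the triangle $\Gamma_T$ reduces to a sub-segment of $\ell_0$ through $v_0$ together with the edge $c(\ell_1)$. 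These split the triangle into two pieces, one on each side of $\ell_0$; the piece on the non-positive side has $c(\ell_0)$ on its boundary and therefore lies in the cell $C_{\ell_0}^-$, and its interior is contained in the interior of $c(\ell_0,\ell_1)$. Since $y$ lies on the non-positive side of $\ell_0$ it lands in this piece, provided one first rules out the degenerate possibility that $y$ coincides with an endpoint of $\ell_1$ (which would place $y$ on the boundary of the triangle and in $C_{\ell_1}$); such degenerate cases can be dispatched separately by a more direct count along the skewering path.
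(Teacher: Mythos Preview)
Your argument is correct and follows essentially the same approach as the paper's proof: assume a bad subset $B$ in a cell, then enlarge it by adjoining $y$, $V(\ell_1)$, and the endpoints along the skewering path back to $\ell_0$, counting the resulting link vertices as new internal points to violate Property~$\Delta_2$. Your presentation is somewhat more uniform than the paper's (which splits into the cases $\lambda=\ell_0$, $\lambda=\ell_1$, and $\lambda\notin\{\ell_0,\ell_1\}$), and your preliminary observation that $|B|$ must be odd is a clean simplification the paper does not make; the degenerate case you flag (where $y$ coincides with an endpoint of $\ell_1$) is not addressed in the paper either and does not arise in the lemma's sole application.
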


\begin{proof}
    We prove both claims by contradiction. To start, suppose that $V(C_{\ell_0}^+)$
    does not satisfy Property~$\Delta_1$ relative to $\ell_0$. Then there is a 
    subset $B\subseteq V(C_{\ell_0}^+)$ which contains the endpoints of $\ell_0$
    such that $B$ has more than $\lfloor \frac{|B|-2}{2} \rfloor$ internal points.
    If we define $B' = B\cup V(\ell_1) \cup \{y\}$, then the internal points of $B$,
    together with $y$ and one endpoint of $\ell_0$, are all internal points of $B'$.
    Therefore, $B'$ has more than
    \[
        \left\lfloor \frac{|B| - 2}{2}\right\rfloor + 2 
        = \left\lfloor \frac{|B| + 2}{2}\right\rfloor
        = \left\lfloor \frac{|B'| - 1}{2} \right\rfloor
    \]
    internal points, which violates the assumption that $P$ satisfies Property~$\Delta_2$.
    Thus $V(C_{\ell_0}^+)$ satisfies Property~$\Delta_1$ relative to $\ell_0$.

    Similarly, let $\ell\in T$ with $\ell\neq \ell_0$ and suppose that there is a 
    subset $B\subseteq V(C_\ell)$ which contains the endpoints of $\ell$ such that
    $B$ has more than $\lfloor \frac{|B|-2}{2} \rfloor$ internal points. 
    If $\ell = \ell_1$, then we can define $B' = B\cup V(\ell_0) \cup \{y\}$ 
    and observe by the same reasoning as above that $B'$ has more than
    $\lfloor \frac{|B'|-1}{2} \rfloor$ internal points, which provokes a contradiction.
    Suppose instead that $\ell \neq \ell_1$. Then there is a skewering sequence 
    $\alpha_1 \dashv \cdots \dashv \alpha_k \dashv \ell$ such that
    either $\ell_0 \dashv \alpha_1$ or $\ell_0 \dashv \ell_1 \dashv \alpha_1$.
    In either case, define
    \[
        B' = B\cup V(\ell_0) \cup V(\ell_1) \cup \{y\} \cup V(\alpha_1) \cup \cdots \cup V(\alpha_k)
    \]
    and observe that the internal points of $B'$ include all the internal 
    points of $B$, as well as one endpoint of each $\alpha_i$, one endpoint of $\ell_0$,
    and $y$. Therefore, the number of internal points in $B'$ is more than
    \[
        \left\lfloor \frac{|B|-2}{2} \right\rfloor + k + 3 
        = \left\lfloor \frac{|B|+2k+4}{2} \right\rfloor
        = \left\lfloor \frac{|B'|-1}{2} \right\rfloor,
    \]
    which violates Property~$\Delta_2$. 
    Thus, $V(C_\ell)$ must have Property~$\Delta_1$ relative to $\ell$.
\end{proof}

\begin{lem}\label{lem:interval-lemma}
    Suppose that $P$ satisfies Property~$\Delta_1$ relative to the boundary line 
    $\ell$ and that for each proper subset $Q\subset P$, the poset of noncrossing partitions 
    $\NC(Q)$ is rank-symmetric. Then $\NC_\ell(P)$ is rank-symmetric.
\end{lem}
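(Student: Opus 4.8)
The plan is to prove this by induction on $|P|$, exploiting the fact that $\NC_\ell(P)$ is the interval $[\pi_\ell, \hat{1}]$ in $\NC(P)$, where $\pi_\ell$ has a single non-singleton block $V(\ell)$. To establish rank-symmetry I will produce a rank-reversing bijection $\Phi\colon \NC_\ell(P)\to\NC_\ell(P)$, or more precisely show that the number of elements of a given rank equals the number of elements of complementary rank, where ``complementary'' is measured relative to the centered interval $[\pi_\ell,\hat 1]$ (note $\rho(\pi_\ell)=1$ and $\rho(\hat 1)=|P|-1$, so this interval has total rank $|P|-2$). The natural strategy, mirroring the proof of Theorem~\ref{thm:catalan-counting}, is to show that moving a point of $P$ across an appropriate line — using $\Delta_k$-moves and block-switching $\BS_m$ — preserves the rank-generating function of $\NC_\ell(P)$, and then to reduce to a base case (e.g. $P$ in convex position relative to $\ell$, where $\NC_\ell(P)$ is a sub-interval of the classical $\NC_n$ and rank-symmetry is known).

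The key steps, in order: first, I would verify that Property~$\Delta_1$ relative to $\ell$ is preserved under the relevant moves — those that take an internal point across a non-separating line in $\aext$ or across a line with an internal endpoint — by an argument parallel to Lemmas~\ref{lem:delta-k-move} and \ref{lem:delta-k-internal-endpoint}, except that now every subset $B$ under consideration is required to contain $V(\ell)$, and the bound is $\lfloor\frac{|B|-2}{2}\rfloor$ rather than $\lfloor\frac{|B|-3+k}{2}\rfloor$. Second, I would invoke a relative analogue of Theorem~\ref{thm:delta-k-connectivity} to reduce $P$, via such moves, to a configuration in convex position (or with all internal points pushed past $\ell$), keeping the endpoints of $\ell$ extremal throughout. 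Third, for each move $m$ I would check that $\BS_m$ restricts to a rank-preserving bijection $\NC_\ell(P)\to\NC_\ell(m(P))$ — the condition ``$V(\ell)\subseteq$ same block'' is preserved because $\BS_m$ only alters membership of $z,m(z)$ and the endpoints of the line being crossed, and by choosing moves across lines other than $\ell$ (or handling the $\ell$-crossing case separately) we keep $\pi_\ell$ below every partition in the image. Fourth, in the base case I would cite the known rank-symmetry/self-duality of $\NC_n$ together with the hypothesis that $\NC(Q)$ is rank-symmetric for proper subsets $Q\subset P$, the latter being needed to handle the product decomposition $\NC_\ell(P)\cong\NC_\ell(V(C_\ell))\times\cdots$ that appears when $\ell$ itself bounds a cell.

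The main obstacle I anticipate is the interaction between the hypothesis ``$\NC(Q)$ rank-symmetric for all proper $Q\subset P$'' and the moves: a single $\Delta_1$-relative move on $P$ does not obviously correspond to a move on each relevant subset $Q$, so I cannot simply transport the inductive hypothesis along the move. The cleanest way around this is probably to avoid the inductive hypothesis on subsets entirely for the ``interior'' of the argument and use it only at the end — namely, once $P$ has been reduced to convex position relative to $\ell$, the interval $\NC_\ell(P)$ either sits inside $\NC_n$ (and is rank-symmetric by Nica–Speicher's product description of intervals in $\NC_n$ combined with the known self-duality) or decomposes as a product one of whose factors is $\NC(Q)$ for a proper subset $Q$, at which point the hypothesis applies. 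A secondary subtlety is the ``relative corank'' bookkeeping: I must confirm that $\BS_m$ sends pre-$m$-noncrossing elements of $\NC_\ell(P)$ bijectively to post-$m$-noncrossing elements of $\NC_\ell(m(P))$ at the same rank, which requires rechecking the equivalence chain in the proof of Theorem~\ref{thm:catalan-counting} under the added constraint that $V(\ell)$ stays in one block — I expect this to go through verbatim provided $\ell$ is not the line being crossed, and to require a short separate argument (or an a priori choice of moves avoiding $\ell$) otherwise.
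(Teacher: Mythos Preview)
Your approach is genuinely different from the paper's and, as written, has a real gap. The paper does \emph{not} use moves or block-switching at all in this lemma. Instead it deletes one endpoint $v$ of $\ell$, giving an injective map $\phi_v\colon \NC_\ell(P)\hookrightarrow \NC(P^v)$; since $P^v$ is a proper subset, $\NC(P^v)$ is rank-symmetric by hypothesis, and the whole argument reduces to showing that the complement $\NC(P^v)\setminus\phi_v(\NC_\ell(P))$ is a disjoint union of centered rank-symmetric intervals. Those intervals are exactly the skewering intervals of Section~\ref{sec:skewers}, and their rank-symmetry comes from the product decomposition of Lemma~\ref{lem:skewering-intervals-decomposition} together with the inductive hypothesis (on the number of internal points) applied to the factors. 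So the hypothesis on proper subsets is not an afterthought: it is the engine of the proof.

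The gap in your proposal is the step where you ``expect [the equivalence chain] to go through verbatim.'' It does not. That chain (Theorem~\ref{thm:catalan-counting}) rests on Remark~\ref{rem:adjacent-noncrossing}, which in turn rests on Lemma~\ref{lem:convex-polygon}: the fact that $R_z=R_z^{\mathrm{ex}}$ for every interior point $z$. That lemma needs full Property~$\Delta_1$. Under the weaker hypothesis of Property~$\Delta_1$ \emph{relative to $\ell$}, subsets $B$ not containing $V(\ell)$ are unconstrained beyond what one can deduce by adjoining $u,v$ (which only yields something like $\Delta_3$ globally), so there may well be triangles disjoint from $V(\ell)$ with two interior points. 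In that situation the block-switching map develops exactly the failure set $F_m(P)$ analyzed in the proof of Theorem~\ref{thm:rank-symmetry}, and restricting to partitions with $V(\ell)$ in one block does nothing to prevent it, since the offending blocks need not touch $u$ or $v$. Handling $F_m(P)$ is precisely what requires the skewering machinery and, ultimately, this very lemma---so your route would become circular. There is also no relative analogue of Theorem~\ref{thm:delta-k-connectivity} proved anywhere, and the moves in Lemmas~\ref{lem:delta-k-move}--\ref{lem:delta-k-internal-endpoint} are stated for global $\Delta_k$, not a relative version; you would have to develop that theory from scratch.
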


\begin{proof}
    We proceed by induction on the number of internal points of $P$. 
    First, if $P$ has no internal points, then since $\ell$ was assumed to be
    a boundary line, we can see that $\NC_\ell(P)$ is
    isomorphic to $\NC_{n-1}$, which is rank-symmetric. Now, suppose that the
    claim is true for any configuration with up to $k-1$ internal points which
    satisfies the lemma's hypotheses, and let $P$ be a configuration with $k$ 
    internal points such that for all proper subsets $Q\subset P$, we know that
    $\NC(Q)$ is rank-symmetric. Let $u$ and $v$ be the endpoints of $\ell$,
    and let $P^v$ denote the complement $P - \{v\}$. We will compare the interval 
    $[\pi_\ell,\hat{1}]\subset \NC(P)$ to the noncrossing partition lattice
    $\NC(P^v)$, which we know is rank-symmetric by assumption.

    Define the map $\phi_v \colon \NC_\ell(P) \to \NC(P^v)$
    by removing $v$ from each partition in the domain and observe that $\phi_v$
    is always injective, but typically not surjective. Our goal is to
    show that $\NC_\ell(P)$ is rank-symmetric; since $\NC(P^v)$
    is assumed to be rank-symmetric and $\phi_v$ is injective, it suffices to 
    show that the complement $\NC(P^v) - \phi_v(\NC_\ell(P))$ is a union of 
    centered, disjoint, rank-symmetric intervals.
    
    Let $W\subset P -\{u,v\}$ be the set of all points $w$ with the property that 
    $\interior(\{u,v,w\})$ is nonempty and note that since $P$ satisfies 
    Property~$\Delta_1$ relative to $\ell$, we must have $|\interior(\{u,v,w\})| = 1$. 
    Then $\NC(P^v) - \phi_v(\NC_\ell(P))$ is the collection of all partitions 
    $\sigma$ of $P^v$ with a block which contains both $u$ and a point $w\in W$, 
    but not the unique point in $\interior(\{u,v,w\})$. Rephrasing this
    characterization in the language of skewering trees, let
    $\mathcal{T}$ be the collection of skewering trees for $P^v$ such that 
    the initial line $\ell_0$ has endpoints $u$ and $w$ for some $w\in W$ and 
    the positive side of $\ell_0$ is chosen to be the one which does not 
    include $v$; then a partition $\sigma \in \NC(P^v)$ lies in 
    $\NC(P^v) - \phi_v(\NC_\ell(P))$ if and only if it belongs to the 
    skewering interval for some skewering tree in $\mathcal{T}$. Together with 
    Lemmas~\ref{lem:skewering-intervals-centered} and
    \ref{lem:skewering-intervals-disjoint}, this tells us that the
    skewering intervals for trees in $\mathcal{T}$ form a collection of centered and disjoint
    intervals whose union is $\NC(P^v) - \phi_v(\NC_\ell(P))$. All that remains is to show
    that each such skewering interval is rank-symmetric.

    Fix a skewering tree $T \in \mathcal{T}$ and consider the skewering interval
    $\NC(P^v,T)$. By Lemma~\ref{lem:skewering-intervals-decomposition}, 
    we have a poset isomorphism 
    \[  
        \NC(P^v,T) \cong 
        \left(\prod_{\substack{\alpha\in T \\ \alpha\neq \ell_0}} \NC_\alpha(P^v \cap C_\alpha) 
        \right) \times
        \NC_{\ell_0}(P^v \cap C^+_{\ell_0})\times
        \NC(P^v \cap C^-_{\ell_0}).
    \]
    By our assumption that every proper subset of $P$ has a rank-symmetric lattice of noncrossing
    partitions, we know that $\NC(P^v\cap C^-_{\ell_0})$ is rank-symmetric. For the middle term
    in the product, we claim that the set $P^v \cap C^+_{\ell_0}$ satisfies Property~$\Delta_1$
    relative to the boundary line $\ell_0$. To see this, 
    recall that the endpoints of $\ell_0$ are the extremal point $u$ and some $w\in W$, and
    define $w'$ be the unique point of $P$ which lies in the interior of $\conv(\{u,v,w\}$.
    If there were points $x, x', x'' \in P^v \cap C^+_{\ell_0}$ such that
    both $x'$ and $x''$ lie in the convex hull $\conv(\{u,w,x\})$, then $\{u,v,w,w',x,x',x''\}$ 
    would be a set of seven points, at least three of which are internal ($w'$, $x'$, and $x''$).
    This would violate our assumption that $P$ has Property~$\Delta_1$ relative to $\ell$, so
    it must be the case that $P^v - C^+_{\ell_0}$ satisfies Property~$\Delta_1$
    relative to $\ell_0$, and by our inductive hypothesis, we know that 
    $\NC_{\ell_0}(P^v \cap C^+_{\ell_0})$ is rank-symmetric.

    Generalizing this argument, we now show that $P^v \cap C_\alpha$ satisfies Property~$\Delta_1$ 
    relative to $\alpha$ for each non-initial line $\alpha$ in $T$. 
    Let $\alpha_1 \dashv \cdots \dashv \alpha_m$ be a skewering sequence in $T$ such that 
    $\alpha_1 = \ell_0$ and $\alpha_m = \alpha$. If there are points
    $x,x',x'' \in P^v \cap C_\alpha$ such that $x'$ and $x''$ lie in the triangle formed by $x$ 
    and the endpoints of $\alpha$, and if $w$ and $w'$ are defined as above, then 
    \[
        \{x,x',x'',w',v\} \cup V(\alpha_1) \cup \cdots \cup V(\alpha_m)
    \]
    is a set of $2m+5$ points, of which at least $m+2$ points must be internal - see 
    Figure~\ref{fig:interval-lemma} for an illustration. Since this set contains both
    $u$ and $v$, this violates our assumption that $P$ satisfies Property~$\Delta_1$ relative to
    $\ell$, so we may conclude that $P^v \cap C_\alpha$ satisfies Property~$\Delta_1$ 
    relative to $\alpha$, as desired. By the inductive hypothesis, $\NC_\alpha(P^v \cap C_\alpha)$
    is rank-symmetric.

    \begin{figure}
        \centering
        \includegraphics[width=\textwidth]{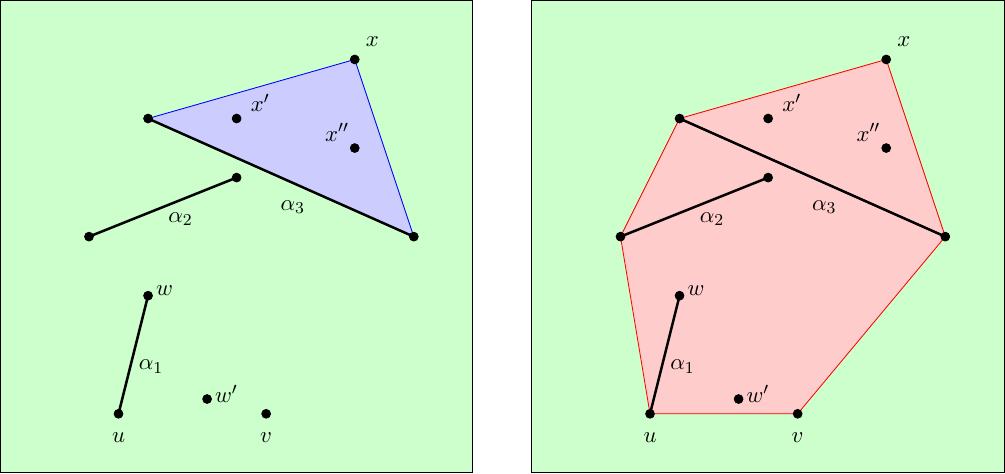}
        \caption{If $\alpha_1\dashv\alpha_2\dashv\alpha_3$ is a skewering sequence
        for $P$ and the points in the cell for $\alpha_3$ do not satisfy Property~$\Delta_1$
        relative to $\alpha_3$ (illustrated on the left in blue), then there is
        a set of points containing $u$ and $v$ (illustrated on right in red) which
        demonstrate that $P$ does not satisfy Property~$\Delta_1$ relative to the
        line with endpoints $u$ and $v$.}
        \label{fig:interval-lemma}
    \end{figure}
    
    Finally, we have that $\NC(P^v,T)$ is a product of rank-symmetric posets and is therefore
    rank-symmetric itself, which completes the proof.
\end{proof}

\begin{thm}[Theorem~\ref{mainthm:rank-symmetry}]
    \label{thm:rank-symmetry}
    Let $P \subset \C$ be a set of $n$ distinct points which satisfies 
    Property~$\Delta_2$. Then $\NC(P)$ is a rank-symmetric graded lattice.
\end{thm}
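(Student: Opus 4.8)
The plan is to mirror the structure of the proof of Theorem~\ref{thm:catalan-counting}: use the connectivity result of Theorem~\ref{thm:delta-k-connectivity} to reduce to the case of a single $\Delta_2$-move, and show that every $\Delta_2$-move preserves rank-symmetry. Since $\NC(P)$ is already known to be a graded lattice by Proposition~\ref{prop:ncp-lattice}, the content is entirely in the rank-symmetry claim. I would set up a global induction on the size $n = |P|$, so that for every proper subset $Q \subset P$ we may assume $\NC(Q)$ is rank-symmetric; this is exactly the hypothesis fed into Lemma~\ref{lem:interval-lemma}, which is the engine of the argument.

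\medskip

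\noindent\textbf{Main argument.} By Theorem~\ref{thm:delta-k-connectivity} there is a sequence of $\Delta_2$-moves from $P$ to a convex configuration, for which $\NC(\,\cdot\,) \cong \NC_n$ is rank-symmetric. So it suffices to show: if $m \colon P \to m(P)$ is a $\Delta_2$-move bringing $z$ across a line $\ell$ with endpoints $w_1, w_2$, then $\NC(P)$ and $\NC(m(P))$ have the same number of elements of each rank. As in Section~\ref{sec:delta-1}, $\NC(m(P))$ is obtained from $\NC(P)$ by deleting the pre-$m$-noncrossing partitions, applying the rank-preserving bijection $m_*$, and adjoining the post-$m$-noncrossing partitions (Definition~\ref{defn:pre-post-noncrossing}); hence it is enough to produce a rank-preserving bijection between the pre-$m$-noncrossing partitions of $P$ and the post-$m$-noncrossing partitions of $m(P)$. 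When $\ell \in \aext$ (the case of Lemma~\ref{lem:delta-k-move}), Lemma~\ref{lem:convex-polygon} is unavailable, but one can still hope the block-switching bijection $\BS_m$ works — the only obstruction being partitions where a block meets $\ell$ transversally, i.e. contains points on both sides of $\ell$ other than $w_1, w_2$. The new case is when $\ell$ has an internal endpoint (Lemma~\ref{lem:delta-k-internal-endpoint}); here the pre/post-noncrossing partitions are governed precisely by the skewering-tree machinery of Section~\ref{sec:skewers}. The idea is that a pre-$m$-noncrossing partition is one where some block is ``torn'' by the crossing of $\ell$, and such blocks organize into skewering intervals $\NC(P, T)$ whose initial line is $\ell$; by Lemma~\ref{lem:skewering-intervals-disjoint} these intervals are disjoint, by Lemma~\ref{lem:skewering-intervals-centered} each is centered, and by Lemmas~\ref{lem:skewering-intervals-decomposition}, \ref{lem:skewering-trees-relative-delta-1}, and \ref{lem:interval-lemma} (using the inductive hypothesis that proper subsets have rank-symmetric noncrossing lattices) each is rank-symmetric. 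A centered rank-symmetric interval has the same number of elements at rank $k$ and corank $k$, which is exactly what is needed to match the deleted pre-$m$-noncrossing partitions against the adjoined post-$m$-noncrossing ones after the shift by $m_*$.

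\medskip

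\noindent\textbf{The hard part.} The genuine obstacle is establishing the exact correspondence between the ``bad'' partitions created or destroyed by a $\Delta_2$-move and the skewering intervals $\NC(P,T)$ with $T \in \mathcal{T}$ — that is, verifying that the pre-$m$-noncrossing partitions of $P$ are precisely covered, without overlap, by a family of centered rank-symmetric intervals, and that the same holds for the post-$m$-noncrossing partitions of $m(P)$, with matching rank-generating functions. This is where Lemma~\ref{lem:interval-lemma} and the relative-$\Delta_1$ condition of Lemma~\ref{lem:skewering-trees-relative-delta-1} do their work, and where one must carefully track how the point $z$ (respectively $m(z) = y$) sits inside the convex hull $c(\ell_0, \ell_1)$ of a skewer. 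I expect the argument to run: reduce to a single move; split into the $\aext$ case (handled essentially as in Theorem~\ref{thm:catalan-counting}) and the internal-endpoint case; in the latter, identify the pre- and post-$m$-noncrossing partitions with unions of skewering intervals for trees in a suitable family $\mathcal{T}$; invoke disjointness, centeredness, and — via the $n$-induction and Lemma~\ref{lem:interval-lemma} — rank-symmetry of each such interval; and conclude that the counts of elements added and removed at rank $k$ agree with those at the complementary rank, so rank-symmetry is preserved under the move and hence holds for $P$.
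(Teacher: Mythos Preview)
Your overall architecture is right --- induct, reduce to a single $\Delta_2$-move via Theorem~\ref{thm:delta-k-connectivity}, and show that the ``bad'' partitions form a centered rank-symmetric subposet built from skewering intervals --- but two concrete identifications are off and would break the argument.

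First, the relevant set is not the pre-$m$-noncrossing set (defined via $m_*$) but $F_m(P) = \{\pi \in \NC(P) : \BS_m(\pi) \notin \NC(m(P))\}$, the failures of the block-switching map. The paper does not split into the cases $\ell \in \aext$ versus $\ell$ with an internal endpoint; under $\Delta_2$ the argument of Theorem~\ref{thm:catalan-counting} does not go through even for $\ell \in \aext$, since Lemma~\ref{lem:convex-polygon} is unavailable, so $F_m(P)$ can be nonempty in either case and is analyzed uniformly. More importantly, the skewering trees $T_i$ that cover $F_m(P)$ do \emph{not} have $\ell$ as their initial line. A partition $\pi$ lies in $F_m(P)$ exactly when there exist $x_i$ and $y_i$ with $z,y_i \in \interior(\{w_1,w_2,x_i\})$, where $\{x_i,z\}$, $\{w_1,w_2\}$, and $\{y_i\}$ lie in three distinct blocks; the initial line is $\ell_i$ through $x_i$ and $z$ (which skewers $\ell$), with $\ell \in T_i$ required and $y_i$ on the non-positive side of $\ell_i$.

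Second, and consequently, your appeal to Lemma~\ref{lem:skewering-intervals-disjoint} for disjointness fails: that lemma only applies to skewering trees sharing the same initial line and positive side. For $i \neq j$ the intervals $\NC(P,T_i)$ and $\NC(P,T_j)$ can genuinely overlap. The paper handles this by an inclusion--exclusion step: it shows each pairwise intersection $[\hat{0}_{T_i}\vee\hat{0}_{T_j},\ \hat{1}_{T_i}\wedge\hat{1}_{T_j}]$ is again centered and rank-symmetric (via a decomposition analogous to Lemma~\ref{lem:skewering-intervals-decomposition} together with Lemmas~\ref{lem:skewering-trees-relative-delta-1} and~\ref{lem:interval-lemma}), and that all triple intersections are empty. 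Without this step the union $F_m(P)$ cannot be concluded to be rank-symmetric, and the proof collapses.
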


\begin{proof}
    We proceed by induction on the number of internal points for $P$. 
    When $P$ has no internal points, $\NC(P)$ is isomorphic to the classical
    noncrossing partition lattice $\NC_n$, which is rank-symmetric. Now, 
    suppose that every configuration with fewer than $|\interior(P)|$ internal points 
    has a rank-symmetric lattice of noncrossing partitions; we will show that 
    $\NC(P)$ is rank-symmetric as well. By Theorem~\ref{thm:delta-k-connectivity}, 
    there is a sequence of $\Delta_2$-moves which transforms $P$ into a 
    convex configuration, for which the lattice of noncrossing partitions 
    is isomorphic to $\NC_n$ and thus 
    rank-symmetric. The only remaining step is to prove that if
    $m\colon P \to m(P)$ is a $\Delta_2$-move, then $\NC(P)$ is rank-symmetric
    if and only if $\NC(m(P))$ is rank-symmetric. 

    In the proof of Theorem~\ref{mainthm:catalan-counting}, we showed that when $P$
    satisfies Property~$\Delta_1$, the block-switching map $\BS_m$ is a 
    rank-preserving bijection between pre-$m$-noncrossing partitions of $P$ and 
    post-$m$-noncrossing partitions of $m(P)$. If $P$ is only assumed to satisfy
    Property~$\Delta_2$, the block switching map might not be a bijection; there may
    be partitions $\pi\in \NC(P)$ such that $\BS_m(\pi)$
    is not in $\NC(m(P))$, or elements $\sigma\in \NC(m(P))$ such that $\BS_m^{-1}(\sigma)$ 
    does not lie in $\NC(P)$. To complete the proof, we must show that these two 
    collections are rank-symmetric subposets of $\NC(P)$ and $\NC(m(P))$ respectively. 
    By symmetry, it suffices to examine the first collection.

    Suppose that the move $m$ takes a point $z\in P$ across a line $\ell \in \mathcal{A}$
    with endpoints $e(\ell) = \{w_1,w_2\}$ and let $F_m(P)$ denote the noncrossing 
    partitions of $P$ which fail to be accounted
    for by the block-switching map $\BS_m$; that is, 
    \[
        F_m(P) = \{\pi \in \NC(P) \mid \BS_m(\pi) \not\in \NC(m(P))\}.
    \]
    If $F_m(P)$ is empty, then there is nothing to prove. Otherwise, for each
    $\pi \in F_m(P)$, there are points $x,y\in P$ such that 
    $y,z \in \interior(\{w_1,w_2,x\})$, and these five points are divided into 
    three distinct blocks of $\pi$ (each of which might contain other points) as 
    follows: $x$ and $z$ belong to one block, $w_1$ and $w_2$ belong to another, 
    and $y$ lies in a third - see Figure~\ref{fig:block-switch-fail}. 
    Using this characterization, we will prove that 
    (1) $F_m(P)$ is a union of skewering intervals, (2) each of these skewering intervals 
    is centered and rank-symmetric, and (3) intersections of the skewering intervals 
    are centered and rank-symmetric.

    \begin{figure}
        \centering
        \includegraphics[width=0.5\textwidth]{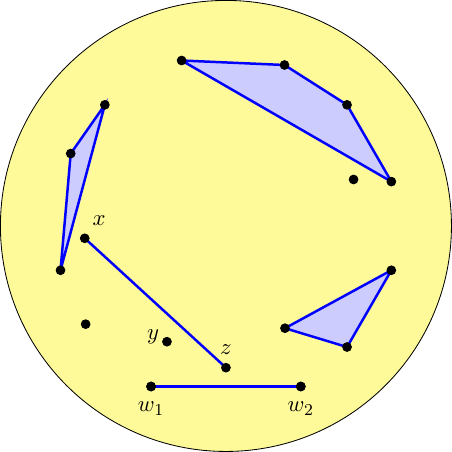}
        \caption{For the depicted partition $\pi$ of $P$, if $m$ moves $z$
        across the line containing $w_1$ and $w_2$, then $\BS_m(\pi)$ is not an
        element of $\NC(m(P))$ since it has $w_1$, $w_2$, and $x$ together
        in a single block without the interior point $y$.}
        \label{fig:block-switch-fail}
    \end{figure}
    
    More concretely, let $y_1,\ldots,y_k$ be the points in $P$ such that for each 
    $i\in \{1,\ldots,k\}$, there is some $x_i\in P$ such that $w_1$, $w_2$, and 
    $x_i$ form a triangle with interior points $z$ and $y_i$ (and nothing else, since
    $P$ satisfies Property~$\Delta_2$). When there is more than one choice for $x_i$, 
    we select the unique option where all other possible
    choices lie in the half-plane bounded by $x_i$ and $z$ which does not contain $y_i$.
    Then $F_m(P)$ consists of all partitions in $\NC(P)$ where there exists an $i$
    such that $z$ and $x_i$ belong to the same block, $w_1$ and $w_2$ belong to 
    a different block, and $y_i$ belongs to a third block. 

    Let $\ell_i$ denote the line with endpoints $x_i$ and $z$. Since $z$ is adjacent 
    to the line $\ell$, we know that $\ell_i$ must skewer $\ell$. The 
    characterization above can thus be rephrased: the partition $\pi$ belongs to $F_m(P)$
    if and only if $\pi$ lies in a skewering interval $\NC(P,T_i)$ for some 
    skewering tree $T_i$ with initial line $\ell_i$
    such that $\ell \in T$ and $y_i$ is on the non-positive side of $\ell_i$.
    It is straightforward to see that each such skewering interval is a subset
    of $F_m(P)$, so we may conclude that $F_m(P)$ is a union of skewering intervals.

    Each skewering interval in the union is centered by 
    Lemma~\ref{lem:skewering-intervals-centered}, which implies that 
    $F_m(P)$ is itself a centered subposet of $\NC(P)$. 
    By Lemma~\ref{lem:skewering-intervals-decomposition}, each skewering interval
    $\NC(P,T_i)$ decomposes into a product of posets; the factor $\NC(V(C_{\ell_i}^-))$
    is rank-symmetric by our inductive hypothesis and terms of the form
    $\NC_\ell(V(C_\ell))$ and $\NC_{\ell_i}(V(C_{\ell_i}^+))$ are rank-symmetric
    by Lemmas~\ref{lem:skewering-trees-relative-delta-1} and \ref{lem:interval-lemma}.
    Since the product of rank-symmetric posets is itself rank-symmetric, we may conclude 
    that the interval $\NC(P,T_i)$ is rank-symmetric.

    \begin{figure}
        \centering
        \includegraphics[width=\textwidth]{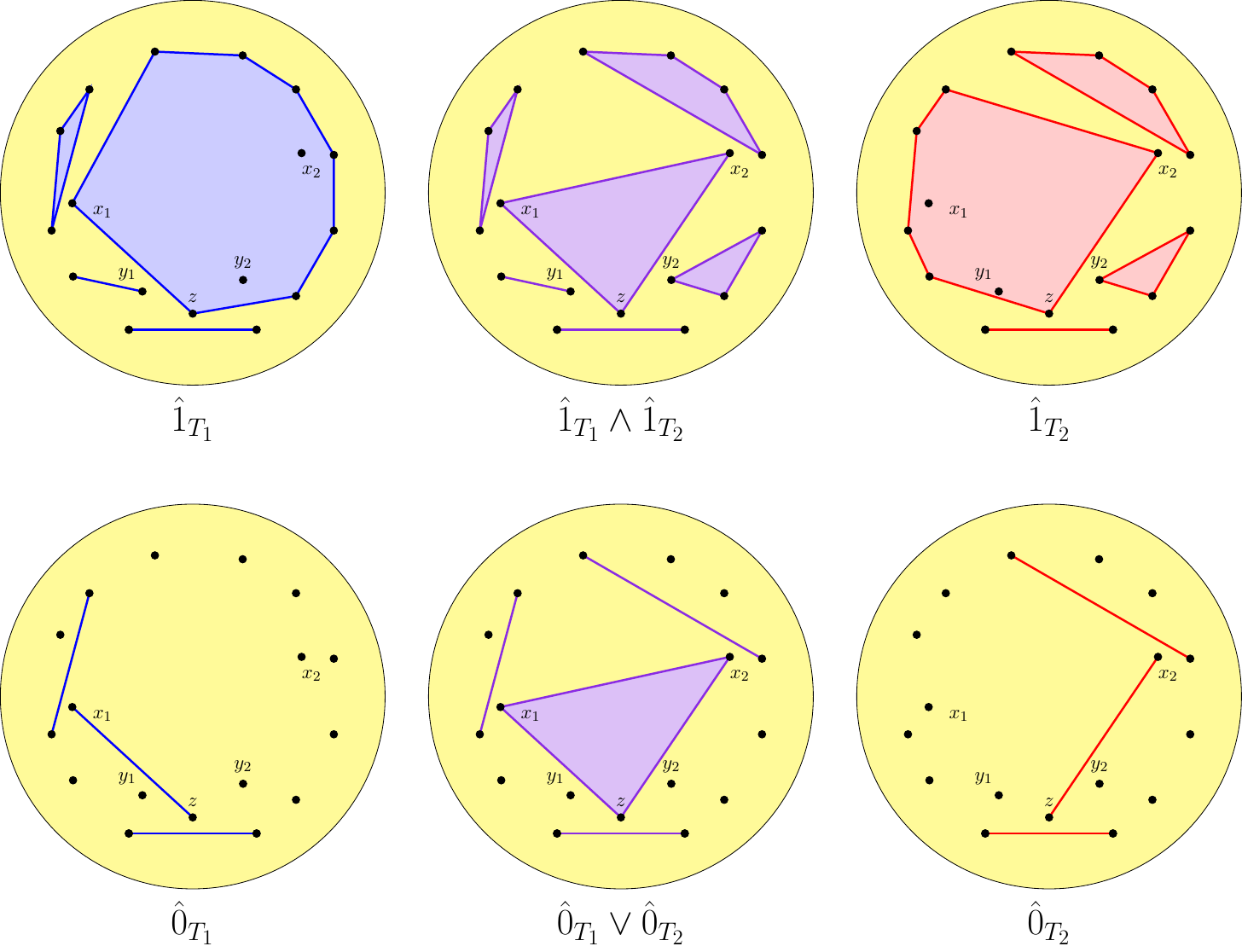}
        \caption{On the left and right, the minimum and maximum elements for 
        a skewering interval are given. The intersection of these two intervals
        is another interval, albeit one which does not arise from a skewering tree;
        its minimum and maximum elements are displayed in the center.}
        \label{fig:interval-intersection}
    \end{figure}

    We are now at the final step: examining how the skewering intervals which
    make up $F_m(P)$ can intersect. First, note that if $T_i$ and $T_i'$ are 
    two distinct skewering trees in the union which both use $\ell_i$ as the initial
    line, we know by Lemma~\ref{lem:skewering-intervals-disjoint} that 
    $\NC(P,T_i)\cap\NC(P,T_i')$
    is empty. Next, consider two skewering trees $T_i$ and $T_j$ which 
    appear in the union with $i\neq j$. The intersection $\NC(P,T_i)\cap\NC(P,T_j)$
    is nonempty precisely when the cells $C_{\ell_i}^+$ for $T_i$ and $C_{\ell_j}^+$
    for $T_j$ have an intersection which excludes both $y_i$ and $y_j$
    (note that this precludes the possibility of nonempty triple intersections
    for skewering intervals). When this is
    the case, the two skewering intervals intersect in the interval
    $[\hat{0}_{T_i} \vee \hat{0}_{T_j}, \hat{1}_{T_i} \wedge \hat{1}_{T_j}]$;
    this is not a skewering interval, but it has many of the associated properties.
    By similar arguments to those given in Lemma~\ref{lem:no-skewer-overlap}, 
    Lemma~\ref{lem:disjoint-cells} and Lemma~\ref{lem:cells-cover}, we can see that
    the minimum element $\hat{0}_{T_i} \vee \hat{0}_{T_j}$ consists of an edge for
    each non-initial line in $T_i$ and $T_j$, together with the triangle with 
    vertices $y_i$, $y_j$, and $z$. This triangle has three cells associated to it:
    one containing $z_i$, one containing $z_j$, and one which is the intersection 
    of $C_{\ell_i}^+$ and $C_{\ell_j}^+$ (thus containing the triangle itself). 
    Each other edge has a well-defined cell in the same way as a typical 
    skewering tree does. The maximal element $\hat{1}_{T_i} \wedge \hat{1}_{T_j}$ 
    is constructed using these cells in the same manner as for skewering trees -
    see Figure~\ref{fig:interval-intersection} for an illustration.
    Putting this all together, 
    $[\hat{0}_{T_i} \vee \hat{0}_{T_j}, \hat{1}_{T_i} \wedge \hat{1}_{T_j}]$
    admits a decomposition similar to the one described in 
    Lemma~\ref{lem:skewering-intervals-decomposition}, so by 
    Lemmas~\ref{lem:skewering-trees-relative-delta-1} and \ref{lem:interval-lemma}, 
    this interval is centered and rank-symmetric.

    In summary, we have shown that $F_m(P)$ is a union of centered and rank-symmetric 
    intervals in $\NC(P)$, whose pairwise intersections are themselves centered 
    and rank-symmetric and whose $k$-wise intersections are empty when $k>2$. Therefore,
    $F_m(P)$ is centered and rank-symmetric, which completes the proof.
\end{proof}

\section*{Acknowledgments}
We are grateful to Swarthmore College for support during the summer of 2022, 
when the majority of our research took place. We also thank the referees for their
helpful comments, especially in clarifying Propositions~\ref{prop:ncp-lattice}
and \ref{prop:graded}.

\section*{Data Availability Statement}

This manuscript has no associated data.


\begin{thebibliography}{0}

\bibitem{baumeister19}
B.~Baumeister, K.-U. Bux, F.~G\"{o}tze, D.~Kielak, and H.~Krause, 
\emph{Non-crossing partitions}, 
Spectral structures and topological methods in mathematics, EMS Ser. Congr. Rep., EMS Publ. House, Z\"{u}rich, 2019, pp.~235--274.

\bibitem{edelman-reiner}
P.~H. Edelman and V.~Reiner, 
\emph{Counting the interior points of a point configuration}, 
Discrete Comput. Geom. 23 (2000), no.~1, 1--13. \MR{1727120}

\bibitem{kreweras72}
G.~Kreweras, 
\emph{Sur les partitions non crois\'{e}es d'un cycle}, 
Discrete Math. 1 (1972), no.~4, 333--350.

\bibitem{mccammond06}
Jon McCammond, 
\emph{Noncrossing partitions in surprising locations}, 
Amer. Math. Monthly 113 (2006), no.~7, 598--610.

\bibitem{nica-speicher}
Alexandru Nica and Roland Speicher, 
\emph{A ``{F}ourier transform'' for multiplicative functions on non-crossing partitions}, 
J. Algebraic Combin. 6 (1997), no.~2, 141--160. \MR{1436532}

\bibitem{razen13}
Andreas Razen and Emo Welzl, 
\emph{On the number of crossing-free partitions}, 
Comput. Geom. 46 (2013), no.~7, 879--893. \MR{3061450}

\bibitem{stanley-enum-vol-1}
Richard~P. Stanley, 
\emph{Enumerative combinatorics. {V}olume 1}, second ed., 
Cambridge Studies in Advanced Mathematics, vol.~49, Cambridge University Press, Cambridge, 2012.

\bibitem{stanley-catalan}
Richard~P. Stanley, 
\emph{Catalan numbers}, 
Cambridge University Press, New York, 2015.

\bibitem{simion-ullman91}
Rodica Simion and Daniel Ullman, 
\emph{On the structure of the lattice of noncrossing partitions}, 
Discrete Math. 98 (1991), no.~3, 193--206.

\bibitem{sharirwelzl06}
Micha Sharir and Emo Welzl, 
\emph{On the number of crossing-free matchings, cycles, and partitions}, 
SIAM J. Comput. 36 (2006), no.~3, 695--720. \MR{2263008}

\end{thebibliography}
\end{document}